\newcommand{\boxedtxt}[1]{%
    \[\fbox{%
        \addtolength{\linewidth}{-2\fboxsep}%
        \addtolength{\linewidth}{-2\fboxrule}%
        \begin{minipage}{\linewidth}%
        #1 
        \end{minipage}%
      }\]%
  }
\newcommand{\Ucal}{\ensuremath{\mathcal U}}
\renewcommand{\Bbb}{\ensuremath{\mathbb B}}
\newcommand{\Nbb}{\ensuremath{\mathbb N}}
\newcommand{\Rbb}{\ensuremath{\mathbb R}}
\newcommand{\ghat}{{\widehat{g}}}
\newcommand{\xhat}{{\widehat{x}}}
\newcommand{\Tbar}{{\overline{T}}}
\newcommand{\mbar}{{\overline{m}}}
\newcommand{\xbar}{{\overline{x}}}
\newcommand{\ybar}{{\overline{y}}}
\newcommand{\epsilonbar}{{\overline{\epsilon}}}
\newcommand{\varepsilonbar}{{\overline{\varepsilon}}}
\newcommand{\lambdabar}{{\overline{\lambda}}}
\newcommand{\DRLRn}{{\Rbb^n}}
\newcommand{\DRLRp}{{\Rbb_+}}
\newcommand{\DRLset}[2]{\left\{#1\,\left|\,#2\right.\right\}}
\newcommand{\DRLmap}[3]{#1:\,#2\rightarrow #3\,}
\newcommand{\DRLmmap}[3]{#1:\,#2\rightrightarrows #3\,}
\newcommand{\DRLip}[2]{\left\langle #1,~ #2\right\rangle}
\newcommand{\und}{\quad\mbox{ and }\quad}
\newcommand{\bdy}{\partial}
\DeclareMathOperator{\argmax}{argmax\,}
\DeclareMathOperator{\argmin}{argmin\,}
\DeclareMathOperator{\DRLrange}{{\rm range}}
\DeclareMathOperator{\DRLdom}{{dom\,}}
\DeclareMathOperator{\gph}{gph}
\DeclareMathOperator{\rint}{ri\,}
\DeclareMathOperator{\intr}{int\,}
\DeclareMathOperator{\sol}{SOL}
\DeclareMathOperator{\vi}{VI}
\DeclareMathOperator{\gvi}{GVI}
\DeclareMathOperator{\mgvi}{MGVI}
\DeclareMathOperator{\wgvi}{WGVI}
\newcommand{\DRLsd}{\partial}
\newtheorem{thm}{Theorem}
\newtheorem{cor}[thm]{Corollary}
\newtheorem{propn}[thm]{Proposition}
\newtheorem{defn}[thm]{Definition}
\newtheorem{assumption}[thm]{Assumption}
\theoremstyle{plain}{\theorembodyfont{\rmfamily}%

\theoremstyle{plain}{\theorembodyfont{\rmfamily}%
\theoremstyle{plain}{\theorembodyfont{\rmfamily}%
\newtheorem{eg}[thm]{Example}
\theoremstyle{plain}{\theorembodyfont{\rmfamily}%
\def\proof{\noindent{\it Proof}. \ignorespaces}
\def\endproof{\vbox{\hrule height0.6pt\hbox{\vrule height1.3ex%
width0.6pt\hskip0.8ex\vrule width0.6pt}\hrule height0.6pt}\medskip}
%-------------------------------
% Fonts and accents
%-------------------------------
%
% caligraphic:

\newcommand{\SGeps}{S_{G_{\varepsilon\varphi}}}
\renewcommand{\epsilon}{\varepsilon}
%opening
\title{Lagrange Multipliers, (Exact) Regularization and Error Bounds for
Monotone Variational Inequalities}
\author{
  C. Charitha\thanks{
Institut f\"ur Numerische und Angewandte Mathematik,
Universit\"at G\"ottingen,
37083 G\"ottingen, Germany.  CC was supported by DFG Grant SFB755-A4. E-mail: \texttt{c.cherugondi@math.uni-goettingen.de}
},~
Joydeep Dutta\thanks{Department of Humanities and Social Sciences,
Indian Institute of Technology Kanpur
208016 Kanpur, India. Email: \texttt{jdutta@iitk.ac.in} }, and
~D.~ Russell Luke\thanks{
Institut f\"ur Numerische und Angewandte Mathematik,
Universit\"at G\"ottingen,
37083 G\"ottingen, Germany.  DRL was supported by DFG Grant SFB755-C2. Email: \texttt{r.luke@math.uni-goettingen.de} 
}  
}

\begin{document}
\maketitle

\begin{abstract}
 We examine two central regularization strategies for monotone variational inequalities, 
 the first a direct regularization of the operative monotone mapping, and the second via 
 regularization of the associated dual gap function.  A key link in the relationship between the 
 solution sets to these various regularized problems is the idea of exact regularization, which, 
 in turn, is fundamentally associated with the existence of Lagrange multipliers for 
 the regularized variational inequality.   A regularization is said to be exact if a solution to the 
 regularized problem is a solution to the unregularized problem for all parameters
 beyond a certain value.  The Lagrange multipliers corresponding to a particular 
 regularization of a variational inequality, on the other hand, are defined via the dual gap function.  
Our analysis suggests various conceptual, iteratively regularized
numerical schemes, for which we provide error bounds, and hence stopping criteria, under the
additional assumption that the solution set to the unregularized problem is what we call 
{\em weakly sharp} of order greater than one.   
\end{abstract}

{\small \noindent {\bfseries 2010 Mathematics Subject
Classification:} {Primary 49J40;  47J20 
Secondary 47H04, 49M20, 49M37, 65K05, 90C30.
}}

\noindent {\bfseries Keywords:}
%Constraint qualification,
Variational inequality, 
exact regularization, 
error bound, 
gap functional, dual gap functional, D-gap function, weak sharp solutions.
\section{Introduction.}
Given a mapping   $F: \Rbb^n
\rightarrow \Rbb^n$, a closed set $\Omega\in \Rbb^n$, we consider the
variational inequality problem $\vi(F,\Omega)$:
\boxedtxt{
 find a vector $\xbar\in \Omega$ such that
\begin{equation*}\tag{VI}\label{e:VI}
 \langle F(\xbar), y-\xbar \rangle\geq 0,~~~\forall y\in \Omega
\end{equation*}
}
and the (strong) generalized variational inequality $\gvi(T,\Omega)$ 
for a multivalued mapping $T: \DRLRn \rightrightarrows \Rbb^n$:  
\boxedtxt{
find  $\xbar\in \Omega$ such that
\begin{equation*}\tag{GVI}\label{e:GVI}
~\exists~ v\in T(\xbar)\mbox{ with }
\DRLip{v}{x-\xbar}\geq 0\quad  \forall x\in \Omega.
\end{equation*}
}
We will denote the sets of solutions to these problems by $\sol(F,\Omega)$ and 
$\sol(T,\Omega)$ respectively where the corresponding problem, 
$\vi(F,\Omega)$ or \\
$\gvi(T,\Omega)$, is clear
from context.
% \begin{equation}\label{DRLe:GVI}
%  ~\forall x\in \Omega~\exists~ v(x)\in T(\xbar)\mbox{ with }
% \DRLip{v(x)}{x-\xbar}\geq 0
% \end{equation}
Though the subject of variational inequalities is well-established (see \cite{FacchineiPang} for
the basic theory and algorithms), we recall some basic definitions.
\begin{defn}\label{d:monotone}[(pseudo) monotone mappings]
 A mapping $F: \Rbb^n
\rightarrow \Rbb^n$ is said to be {\em pseudomonotone} on $\Omega$ if for all $x,y\in\Omega$ 
\begin{equation}\label{e:pseudomon}
   \langle x-y, F(y)\rangle\geq 0\quad \implies\quad    \langle x-y, F(x)\rangle \geq 0.
\end{equation}
$F$ is said to be 
pseudomonotone$^+$ on $\Omega$ if $F$ is pseudomonotone and for all $x, y$ in $\Omega$,
\begin{equation}
 \langle F(x), y-x \rangle\geq 0\quad\mbox{and}\quad \langle F(y), y-x \rangle=0 \Rightarrow F(y)=F(x).
 \end{equation}
$F$ is called {\em monotone} on $\Omega$ if
\begin{equation}
 \langle F(x)-F(y), x-y \rangle \geq 0~~~~\forall~x,y\in\Omega.
\end{equation}
$F$ is called {\em strongly monotone} on $\Omega$ if there exists a $\mu>0$ such that
\begin{equation}
 \langle F(x)-F(y), x-y \rangle \geq \mu \|x-y\|^2~~~~\forall~x,y\in\Omega.
\end{equation}
\end{defn}
We recall that for monotone functions, the solution set, if exists, is convex. Throughout this work we assume the following.
\begin{center}
\fbox{%
        \addtolength{\linewidth}{-2\fboxsep}%
        \addtolength{\linewidth}{-2\fboxrule}%
        \begin{minipage}{\linewidth}%
        \begin{assumption}\label{hyp:F}$~$
\begin{enumerate}[(i)]
   \item
\label{hyp:F1} 
$\Omega\subset\DRLRn$ is nonempty, closed and convex.
\item
\label{hyp:F2} $\DRLmap{F}{\Omega}{\DRLRn}$ is continuous and monotone.  
\end{enumerate}
\end{assumption}
 \end{minipage}%
}
\end{center}
Assumption \ref{hyp:F}\eqref{hyp:F2} above is understood in the context of Definition \ref{d:monotone} by the 
obvious extension of $F$ to a mapping defined on $\DRLRn$ by the mapping whose effective domain is $\Omega$, that is $F(x)=\emptyset$
for all $x\notin \Omega$.   
We recall a standard result on existence and boundedness of the set of solutions to $\vi(F,\Omega)$.  
Define the \textit{recession} or {\em asymptotic cone} $\Omega^\infty$ by
\begin{equation}\label{e:Omegainf}
\Omega^\infty\equiv \DRLset{w\in \DRLRn}{\mbox{ for any } x\in \Omega, ~x+w\tau \in \Omega~\forall ~\tau\geq 0}.
\end{equation}
\begin{lemma}[Exercise 12.52, \cite{VA}]\label{t:S_0 compact}
Under assumption \ref{hyp:F}, $\sol(F,\Omega)$ is nonempty and bounded if and only if 
\begin{equation}\label{hyp:F4}
w\in \Omega^\infty\setminus\{0\}~\implies
\exists x\in \Omega \mbox { with }\DRLip{F(x)}{w}>0.  
\end{equation}
\end{lemma}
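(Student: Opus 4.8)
The plan is to recognize this as an instance of a classical coercivity/recession argument for variational inequalities, essentially a consequence of the solvability theory built on Brouwer's fixed point theorem (via Hartman--Stampacchia) together with a degree-theoretic or limiting argument to handle the unbounded case. First I would reduce to the compact case: for each $r>0$ set $\Omega_r \equiv \Omega\cap \overline{\Bbb}(0,r)$, which is nonempty (for $r$ large), compact and convex, so by the Hartman--Stampacchia theorem $\vi(F,\Omega_r)$ has a solution $x_r\in\Omega_r$. The core of the argument is then the standard fact that, under condition \eqref{hyp:F4}, the family $\{x_r\}$ is bounded: if not, one extracts a subsequence with $\|x_r\|\to\infty$ and $x_r/\|x_r\|\to w$ for some unit vector $w$, shows $w\in\Omega^\infty$ (using convexity and closedness of $\Omega$), and derives a contradiction with \eqref{hyp:F4} by testing the variational inequality for $x_r$ against a fixed point $x\in\Omega$ with $\DRLip{F(x)}{w}>0$ — here one uses monotonicity to pass from $F(x_r)$ to $F(x)$ in the inner product, namely $\DRLip{F(x_r)}{x_r-x}\ge \DRLip{F(x)}{x_r-x}$, divide by $\|x_r\|$ and take limits to get $\DRLip{F(x)}{w}\le 0$. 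Once $\{x_r\}$ is bounded, any cluster point $\xbar$ lies in $\Omega$ and, because the constraint $\|x\|\le r$ is inactive at $\xbar$ for large $r$, a limiting argument (continuity of $F$) shows $\xbar\in\sol(F,\Omega)$, so the solution set is nonempty.

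For the boundedness of $\sol(F,\Omega)$ itself under \eqref{hyp:F4}, I would run the same recession argument directly on the solution set: if $\{y_k\}\subset\sol(F,\Omega)$ with $\|y_k\|\to\infty$, then $y_k/\|y_k\|\to w\in\Omega^\infty\setminus\{0\}$ after passing to a subsequence, and testing $\DRLip{F(y_k)}{x-y_k}\ge 0$ against the witness $x$ from \eqref{hyp:F4}, together with monotonicity, again forces $\DRLip{F(x)}{w}\le 0$, a contradiction. Boundedness plus closedness (which follows from continuity of $F$, $\sol(F,\Omega)$ being an intersection of closed half-space-type conditions) gives compactness of $\sol(F,\Omega)$; and convexity was already noted in the text for monotone $F$, though it is not needed for this statement.

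For the converse direction, suppose $\sol(F,\Omega)$ is nonempty and bounded, and let $w\in\Omega^\infty\setminus\{0\}$; I must produce $x\in\Omega$ with $\DRLip{F(x)}{w}>0$. The natural approach is contrapositive: assume $\DRLip{F(x)}{w}\le 0$ for \emph{every} $x\in\Omega$, fix any $\xbar\in\sol(F,\Omega)$, and show the entire ray $\xbar+\tau w$, $\tau\ge 0$, lies in $\sol(F,\Omega)$ — contradicting boundedness. To verify $\xbar+\tau w\in\sol(F,\Omega)$, take any $y\in\Omega$; I need $\DRLip{F(\xbar+\tau w)}{y-\xbar-\tau w}\ge 0$. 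Using monotonicity, $\DRLip{F(\xbar+\tau w)}{y-\xbar-\tau w}\ge \DRLip{F(y)}{y - \xbar - \tau w} - $ wait, more cleanly: $\DRLip{F(\xbar+\tau w)}{(\xbar+\tau w) - y} \le \DRLip{F(\xbar+\tau w) - F(y)}{(\xbar+\tau w)-y}$ is the wrong direction, so instead I write $\DRLip{F(\xbar+\tau w)}{y-(\xbar+\tau w)} \ge \DRLip{F(y)}{y-(\xbar+\tau w)}$ is also not immediate; the right move is the one-sided estimate $\DRLip{F(z)}{y-z}\ge \DRLip{F(y)}{y-z} - \DRLip{F(y)-F(z)}{\,\cdot\,}$ — I will organize it as: monotonicity gives $\DRLip{F(z)-F(\xbar)}{z-\xbar}\ge 0$ with $z=\xbar+\tau w$, i.e. $\tau\DRLip{F(z)}{w}\ge \tau\DRLip{F(\xbar)}{w}$, and separately $\DRLip{F(z)}{y-z} = \DRLip{F(z)}{y-\xbar} - \tau\DRLip{F(z)}{w}$; the assumption $\DRLip{F(x)}{w}\le 0$ applied at $x=z$ controls the $-\tau\DRLip{F(z)}{w}\ge 0$ term, while a standard argument using $\xbar\in\sol(F,\Omega)$ and a limiting/averaging trick handles $\DRLip{F(z)}{y-\xbar}$. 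I expect \emph{this converse implication to be the main obstacle}: making the ray-invariance argument fully rigorous requires carefully combining monotonicity, the solution property of $\xbar$, and the sign hypothesis on $w$, and one must be attentive to the direction of every inequality. If a direct proof proves awkward, the fallback is to cite Lemma~\ref{t:S_0 compact} itself (Exercise 12.52 in \cite{VA}) as the authoritative source, since the statement is explicitly attributed there, and merely sketch the recession argument above for the reader's convenience.
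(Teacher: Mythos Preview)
The paper does not actually prove this lemma: it is stated with attribution to Exercise~12.52 of \cite{VA} and left at that. (Later, Proposition~\ref{t:boundedness of solGVI} proves the set-valued generalization, but by reduction to Theorem~12.51 of \cite{VA} rather than by a direct argument.) So your approach is necessarily different in that you attempt an elementary, self-contained proof via Hartman--Stampacchia and recession arguments. The forward direction --- truncate to $\Omega_r$, extract $x_r$, use monotonicity to pass from $F(x_r)$ to $F(x)$ in the inequality $\DRLip{F(x_r)}{x_r-x}\le 0$, divide by $\|x_r\|$, and contradict \eqref{hyp:F4} --- is standard and correct, and the same recession argument does bound $\sol(F,\Omega)$.

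The converse direction, however, has a real gap. Your decomposition
\[
\DRLip{F(z)}{y-z}=\DRLip{F(z)}{y-\xbar}-\tau\DRLip{F(z)}{w},\qquad z=\xbar+\tau w,
\]
correctly handles the second term via the hypothesis $\DRLip{F(z)}{w}\le 0$, but the first term $\DRLip{F(z)}{y-\xbar}$ is \emph{not} controlled by ``$\xbar\in\sol(F,\Omega)$ and a limiting/averaging trick'': monotonicity relates $F(z)-F(\xbar)$ to $z-\xbar=\tau w$, not to $y-\xbar$, so there is no way to transfer the sign of $\DRLip{F(\xbar)}{y-\xbar}$ to $\DRLip{F(z)}{y-\xbar}$. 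The clean fix is to work instead with the Minty characterization (valid here since $F$ is continuous and monotone): $\xbar+\tau w\in\sol(F,\Omega)$ if and only if $\DRLip{F(y)}{y-\xbar-\tau w}\ge 0$ for all $y\in\Omega$, and this decomposes as $\DRLip{F(y)}{y-\xbar}-\tau\DRLip{F(y)}{w}$, where the first term is nonnegative because $\xbar$ is a (Minty) solution and the second because of the contrapositive hypothesis. That one-line argument replaces your entire last paragraph.
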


There is a vast literature on how to solve a variational 
inequality under various assumptions (see \cite{FacchineiPang} and 
references therein).  Of particular interest for us are 
{\em ill-posed variational inequalities}.  There are many definitions of ill-posedness.  Here we will 
consider ill-posed any variational inequality $\vi(F,\Omega)$ for which $F$ is not strongly monotone.  \
The conventional approach to 
such problems is to {\em regularize}, or otherwise modify the problem so that the regularized  
problem is well-posed and has one or more solutions that are reasonable approximations to solutions to 
the original problem. A solution to the desired ill-posed problem, if exists, is then achieved as a limit 
of solutions to well-posed approximate problems.
  
A central motivation of this paper is the concept of {\em exact regularization} for a variational inequality, that is, 
a regularization for which the regularized solution corresponds to a solution to the unregularized problem
for all regularization parameters below a certain threshold.  Exact {\em penalization} is a well understood concept
in constrained optimization, and the relation to the existence of Lagrange multipliers has been extensively studied.  This has 
recently been extended to penalized variational inequalities where the connection to Lagrange multipliers also 
appears \cite[Lemma 4]{FacchineiPangLampariello}.   
We take our inspiration from the concept of exact {\em regularization} developed in 
the context of convex programming by Friedlander and Tseng \cite{FriedlanderTseng07}. Given a convex mapping   $f: \Rbb^n
\rightarrow \Rbb$, a nonempty closed convex set $\Omega\in \Rbb^n$ and 
a continuous convex map $\varphi:\Rbb^n
\rightarrow \Rbb$, $\epsilon>0$, consider the following regularization scheme:
\[
(\mathcal{P}_0)\qquad \underset{\substack{x \in \Omega}}{\mbox{minimize }}f(x)\quad\to \quad
(\mathcal{P}_\varepsilon)\qquad \underset{\substack{x \in \Omega}}{\mbox{minimize }}f(x)+\varepsilon\varphi(x).
\]
When $\varphi=\|.\|^2$, it is the well known Tikhonov regularization and when $\varphi=\|.\|_1$, an $l_1$ regularization.
The regularization is said to be exact if solutions to $(\mathcal{P}_\varepsilon)$ are solutions 
to $(\mathcal{P}_0)$ for $\varepsilon$ below some threshold value.  

Generalizing this to variational inequalities, for any continuous convex mapping 
$\DRLmap{\varphi}{\Rbb^n}{\Rbb\cup\{+\infty\}}$ with $\DRLdom \varphi=\Omega$, denote
$T_\epsilon\equiv F+\epsilon\DRLsd\varphi$, with $\epsilon>0$ fixed, and 
$F$ extended by $\emptyset$ to a mapping $\DRLRn\to \DRLRn$.   
We consider the following regularization strategy for $ \vi (F, \Omega) $ which turns
out to be a specialization of \eqref{e:GVI}:   
\boxedtxt{
find $\xbar \in \Omega$ such that
\begin{equation*}\tag{GVI$_{T_\epsilon}$}\label{e:GVIe}
 \exists~ v\in T_\epsilon(\xbar)\equiv F(\xbar) + \epsilon\DRLsd\varphi(\xbar)\mbox{ with }
\DRLip{v}{x-\xbar}\geq 0, ~\forall x\in \Omega.
\end{equation*}
}
In our extension of the notion of exact regularization to variational inequalities we 
introduce {\em Lagrange multipliers} for variational inequalities, the existence
of which are closely tied to the existence of exact regularization strategies.  
The central tool for our analysis is the {\em gap function}.

For a given variational
inequality $\vi(F,\Omega)$, a {\em gap function} is a function $\psi:\Rbb^n \rightarrow \Rbb\cup\{+\infty\}$
with $\Omega\subseteq \DRLdom \psi$ and 
\begin{enumerate}
 \item $\psi(x)\geq 0$ for all $x \in \Omega$;
 \item $\psi(\xbar)=0$, $\xbar \in \Omega$ if and only if $\xbar$ solves $\vi(F,\Omega)$.
\end{enumerate}
It is clear that any minimizer $\bar{x}$ of the gap function $\psi$ over $\Omega$
with $\psi(\bar{x})=0$ is a solution to $\vi(F,\Omega)$.
The first occurrence of the gap function for $\vi(F,\Omega)$ is  
Auslender's gap function \cite{Aus73}:
\begin{equation}\label{e:gap func}
 \theta(x)=\sup_{y\in \Omega} \langle F(x), x-y \rangle,
\end{equation}
where we use the convention that the value of a function on the emptyset is $+\infty$ so that $\theta(x)=+\infty$
at points $x\notin\Omega$. 
The dual gap function for $\vi(F,\Omega)$ is given by
\begin{equation}\label{e:dual gap func}
 G(x)=\sup_{y\in \Omega} \langle F(y), x-y \rangle.
\end{equation}
For each fixed $y$, the function $x\mapsto \langle F(y), x-y \rangle$ is affine. Thus the dual gap function $G$ is 
closed and convex on $\Omega$ since it is the pointwise supremum over affine functions.
The dual gap function is not necessarily a gap function for $\vi(F,\Omega)$, however, with additional assumptions
on $F$, it is indeed a gap function.  In particular,  
if the mapping $F$ is pseudomonotone and continuous, then $G$ is 
in fact a gap function for $\vi(F,\Omega)$ \cite[Theorem 2.3.5]{FacchineiPang}. 

Note that neither $\theta$ nor $G$ is finite valued in general. If $\Omega$ is assumed to be
compact, then both are finite-valued, but we will avoid such restrictions in what follows. 
A regularized gap function for $\vi(F,\Omega)$ with regularization parameter $\alpha>0$, 
is given by 
 \begin{equation}\label{e:reggap}
  \theta_\alpha(x)=\sup_{y\in \Omega} \left\{\langle F(x), x-y \rangle
-\frac{\alpha}{2}\|y-x\|^2\right\}.
 \end{equation}
This was introduced in \cite{Fukushima92} and is finite valued for any closed
convex set $\Omega$. Note that
when $F$ is strongly monotone then $G$ is finite valued even without $\Omega$ being compact. For, the
strong monotonicity of $F$ on $\Omega$ with constant $\mu$ implies that
\begin{equation}
 \langle F(y), x-y\rangle \leq \langle F(x), x-y\rangle -\mu\|x-y\|^2.
\end{equation}
Hence
\begin{equation}
 G(x)=\sup_{y\in \Omega }\langle F(y), x-y\rangle \leq  \sup_{y\in \Omega }\{ \langle F(x), x-y\rangle -\mu\|x-y\|^2\}=\theta_{2\mu}(x)<\infty.
\end{equation}

One can reformulate $\vi(F,\Omega)$ as a constrained optimization problem using
$\theta_\alpha$. 
Since the objective function in \eqref{e:reggap} is strongly concave, for every
$x$ there exists a unique solution $y_\alpha(x)$ which is explicitly given by
\begin{equation}
 y_\alpha(x)=P_\Omega\left(x-\frac{1}{\alpha}F(x)\right)
\end{equation}
where $P_\Omega(z)\equiv\argmin_{y\in\Omega}\|y-z\|$ is the {\em projection} onto the 
set $\Omega$. 
Hence $\theta_\alpha(x)$ can be explicitly written as
 \begin{equation}
  \theta_\alpha(x)=\left\{\langle F(x), x-y_\alpha(x) \rangle
-\frac{\alpha}{2}\|y_\alpha(x)-x\|^2\right\}.
 \end{equation}
When $F$ is continuously differentiable, $\theta_\alpha$ is continuously differentiable
\cite[Theorem 3.2]{Fukushima92} and hence we can reformulate
$\vi(F,\Omega)$ as a constrained 
optimization problem with the differentiable objective function
$\theta_\alpha$.

We show in Section \ref{s:existence and boundedness} that, although the solution set of the regularized 
problem \eqref{e:GVIe} has some relation to the solution set of \eqref{e:VI}, we will achieve a more precise 
correspondence via the dual gap function 
$G$ defined by \eqref{e:dual gap func} and the equivalence between solutions to the problem 
\eqref{e:VI} and the convex optimization 
problem 
\begin{equation}\tag{$\mathcal{P}_G$}\label{e:PG}
\underset{\substack{x \in \Omega}}{\mbox{minimize }}~ G(x).
\end{equation}
If $\sol(F,\Omega)\neq \emptyset$, then solving $ \vi(F, \Omega) $ is 
equivalent to solving \eqref{e:PG}.
The corresponding regularization of the the above convex optimization problem in the spirit of \cite{FriedlanderTseng07}
gives us the problem 
\begin{equation}\tag{$\mathcal{P}_{G_{\varepsilon\varphi}}$}\label{e:PGe}
\underset{\substack{x \in \Omega}}{\mbox{minimize }}G_{\varepsilon\varphi}(x)\equiv G(x) + \varepsilon \varphi(x).
\end{equation}

\begin{defn}[exact regularization of variational inequalities]\label{d:exact reg VI}
A regularization of the variational inequality \eqref{e:VI} is said to be {\em exact} 
if solutions to the convex optimization 
problem \eqref{e:PGe} are
also solutions to  \eqref{e:VI} for all values of $\epsilon$
below some threshold value $\epsilonbar>0$.   
\end{defn}

Another advantage of gap functions is the availability of computable error bounds for 
strongly monotone variational inequalities. Error bounds, in turn, are essential for principled 
stopping criteria for algorithms.  These are discussed in Section \ref{s:reg G} where 
we derive an upper bound on the error under the assumption that the solution set $\vi(F,\Omega)$ is 
{\em weakly-sharp of order gamma} \eqref{weaksharpVIG-gamma}.
Error bounds can also be achieved for the special 
case of monotone mappings where $F(x)=Mx+q$ and $\Omega=\Rbb^n_+$ or a polyhedron with a positive 
semidefinite matrix $M$.  For general monotone variational inequalities, however, we are unaware of any 
results on error bounds using the gap function. 

Through the study of unconstrained reformulations for variational inequalities  
the closely related $D$-gap function $\theta_{\alpha\beta}$ for $\vi(F,\Omega)$ was introduced \cite{Peng97}.
It is defined as the difference of two regularized
gap functions $\theta_{\alpha}$ and $\theta_{\beta}$ with $\beta>\alpha$ and  is given by
\begin{equation}\label{e:Dgap}
 \theta_{\alpha\beta}(x)= \theta_{\alpha}(x)-\theta_{\beta}(x); ~~\beta>\alpha>0.
\end{equation}
The D-gap function satisfies the following properties \cite[Theorem 3.2]{YamashitaTajiFukushima97}:
\begin{enumerate}
\item $ \theta_{\alpha\beta}(x)\geq 0$ for all $x \in \Rbb^n$;
 \item $ \theta_{\alpha\beta}(\xbar)=0$, $x\in \Rbb^n$ if and only if $\xbar$ solves $\vi(F,\Omega)$.
\end{enumerate}
The D-gap function provides an unconstrained
reformulation of the variational 
inequality \cite[Theorem 3.2]{YamashitaTajiFukushima97}.    
As with the gap function, when $F$ is continuously differentiable, the D-gap function is smooth 
and the resulting unconstrained optimization 
problem of minimizing $\theta_{\alpha\beta}$ is smooth \cite[Theorem 3.1]{YamashitaTajiFukushima97}. 

As the theory for gap and $D$-gap functions for generalized variational inequalities is underdeveloped, particularly 
with regard to numerical algorithms, we will, when necessary, restrict our attention to {\em differentiable}
strongly convex regularizers $\varphi$.  Our numerical approach for solving the regularized problems 
$\vi(T_\epsilon,\Omega)$ with $T_\epsilon=F+\epsilon\nabla\varphi$ is via D-gap functions for  
which there is ample choice of appropriate methods.  We use the attendant error bounds developed 
in \cite{Dutta12} and \cite{YamashitaTajiFukushima97} for iterative methods for solving 
$\vi(T_\epsilon, \Omega)$ with $\epsilon$ fixed and $T_\epsilon=F+\epsilon\nabla\varphi$ strongly monotone. 
In the limit as $\epsilon\to 0$ we approach the solution set to $\vi(F,\Omega)$.  If our regularization $\varphi$
is {\em exact}, then, for some $\epsilon$ below a threshold value, the procedure for solving the 
regularized problem will converge to a point in $\sol(F,\Omega)$  with computable error bounds. 
When the regularization is not exact, error bounds on the distance from the regularized solution to the
original solution set is provided in \cite{FriedlanderTseng07} for a general convex minimization. 
We derive in Section \ref{s:error bounds S_eps}  conditions for a 
similar error bound for the generalization to variational inequalities. 
Unlike the case of convex minimization our generalization demands more than the existence of
the "weak sharp minima"  in order to achieve exact regularization.  Our characterization 
\eqref{weaksharp-gamma-stronger} appears to be new. 

In section 2, we study the properties of the solution sets of generalized variational inequalities  with the purpose of
understanding the solution sets of the regularized problem $ \gvi(T_\epsilon, \Omega) $ with 
$T_\epsilon=F+\epsilon\partial\varphi$.  Here the essential role of the dual gap function for 
characterizing exact regularization becomes apparent. In Section 3 we focus on  
 the solution methods for monotone variational inequalities via iterative regularization 
of the dual gap function. The analysis in Section 4 is refined to the special case when $\varphi$ is differentiable
, where we study direct regularization of the variational inequality via \eqref{e:GVIe}.
In the same section we present some numerical results illustrating the theory.
 
\section{Solution Sets}\label{s:existence and boundedness}
We begin with a study of the relationship between regularized generalized variational inequalities and 
their limit as the regularization parameter goes to zero.  
\subsection{Basic Facts, Notation and Assumptions}
Our focus in this section is on the solution sets of  $\gvi(T, \Omega) $ where $ T $ is a {\em maximal 
monotone map} and $ \Omega $ is a non-empty closed and convex set. 
\begin{defn}[normal cone]
A Normal cone to a closed convex set $\Omega$ at a point $\xbar \in \Omega$ is defined as
\begin{equation} 
 N_{\Omega}(\xbar)=\{v\in \mathbb{R}^n: \langle v,  x-\xbar \rangle \leq 0 \quad \forall x\in \Omega\}.
\end{equation}

\end{defn}

\begin{defn}[(maximal) monotone mappings]\label{d:maximonotone}
A set-valued map $\DRLmmap{T}{\Rbb^n}{\Rbb^n}$ is {\em $\xi$-monotone} for some
$\xi>1$ if there exists
$\mu>0$ such that
\begin{equation}
 \langle v-w, x-y \rangle \geq \mu \|x-y\|^\xi~~~~\forall~(x, v) \in \gph T~
\mbox{ and }\forall ~ (y,w)\in \gph T.
\end{equation}
It is simply said to be {\em monotone} if it is $\xi=1$ and $\mu=0$ in the above equation.
$F$ is {\em maximally monotone} if there is no monotone operator 
$\DRLmmap{\Tbar}{\DRLRn}{\DRLRn}$ such that the graph of $\Tbar$ 
properly contains the graph of $T$. 
$T$ is {\em strongly monotone} if there
exists
$\mu>0$ such that
\begin{equation}
 \langle v-w, x-y \rangle \geq \mu \|x-y\|^2~~~\forall~(x, v) \in \gph T~
\mbox{ and }\forall~  (y,w)\in \gph T.
\end{equation}
\end{defn}
Using this notion we can alternatively write \eqref{e:GVI}
as a maximal monotone inclusion: 
\begin{eqnarray*}
0 \in T(x) + N_\Omega (x).
\end{eqnarray*}
\noindent Note that $N_\Omega$ is maximal monotone \cite[Example 20. 41]{BauschkeCombettes}. If $\DRLdom T = \Omega $ then $ T + N_\Omega $ is also maximal monotone. More generally, if  
$ \rint (\DRLdom T)  \cap \rint (\DRLdom  N_\Omega) = \rint (\DRLdom T) \cap \rint (\Omega) \neq \emptyset $ then $ T + N_\Omega $
is also maximal monotone. 

Another central property of set-valued mappings that we will make use of concerns the notion 
of continuity.  
\begin{defn}\label{d:osc}
A map $T: \Rbb^n\rightrightarrows \Rbb^n$ is {\em outer semi-continuous} at
a point $\xbar\in \Rbb^n$ if 
\[
\bigcup_{x^k\to \xbar}\limsup_{k\to\infty} T(x^k)\subset T(\xbar) 
\]
\end{defn}
In another useful characterization, a set-valued map $T$ is  outer semicontinuous everywhere if and only if its graph is closed
(\cite[Theorem 5.7]{VA}).\\\\

We begin by studying some of the fundamental properties of the solution set $\sol(T,\Omega)$, 
such as convexity and boundedness.  We begin with convexity.  For  this, 
we introduce the notion of a {\em Minty $GVI$}, denoted by  $\mgvi(T, \Omega)$,  wherein 
we seek $\xbar\in \Omega$, such that for each $y\in \Omega$ 
and any $v\in T(y)$
\begin{equation}
\langle v, y-\xbar \rangle \geq 0.
\end{equation}
Compare this to the {\em weak $GVI$}, denoted by
$\wgvi(T, \Omega)$ \cite{AusselDutta11}, wherein we seek to find $\xbar$, such that for 
each $x\in \Omega$, there exists $w_x\in T(\xbar)$ such that
\begin{equation}
 \langle w_x, x-\xbar \rangle\geq 0.
\end{equation}
Let us denote the solution sets of $\mgvi(T, \Omega)$ and $\wgvi(T, \Omega)$ by
$\sol^M(T, \Omega)$ and $\sol^W(T, \Omega)$. We will first show that $\sol^M (T, \Omega)$ is a convex set.
\begin{lemma}\label{l:minty-weak}
 $\sol^M (T, \Omega)$ is a convex set.
 \begin{enumerate}[(i)]
  \item\label{l:minty-weak i} If T is monotone, then $\sol (T, \Omega)\subseteq \sol^M (T, \Omega)$.
    \item\label{l:minty-weak ii} If T is locally bounded and graph closed, then $\sol^M (T, \Omega) \subseteq \sol^W (T, \Omega)$. 
 \end{enumerate} 
\end{lemma}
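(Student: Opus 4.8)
The plan is to handle the three assertions in a natural order: convexity of $\sol^M(T,\Omega)$ first, since it does not require either monotonicity or local boundedness, then part \eqref{l:minty-weak i}, then part \eqref{l:minty-weak ii}.

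\emph{Convexity.} The key observation is that $\sol^M(T,\Omega)$ is an intersection of half-spaces intersected with $\Omega$. Precisely, for each fixed pair $(y,v)$ with $y\in\Omega$ and $v\in T(y)$, the set $H_{y,v}\equiv\{\,\xbar\in\DRLRn : \DRLip{v}{y-\xbar}\geq 0\,\}$ is a closed half-space, hence convex. By definition $\xbar\in\sol^M(T,\Omega)$ iff $\xbar\in\Omega$ and $\xbar\in H_{y,v}$ for \emph{every} such pair $(y,v)$; that is,
\[
\sol^M(T,\Omega)=\Omega\cap\bigcap_{y\in\Omega}\ \bigcap_{v\in T(y)} H_{y,v}.
\]
An arbitrary intersection of convex sets is convex, and $\Omega$ is convex by Assumption \ref{hyp:F}\eqref{hyp:F1}, so $\sol^M(T,\Omega)$ is convex. (Note this argument is vacuously fine if the set is empty.)

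\emph{Part \eqref{l:minty-weak i}.} Let $\xbar\in\sol(T,\Omega)$, so there exists $\vbar\in T(\xbar)$ with $\DRLip{\vbar}{x-\xbar}\geq 0$ for all $x\in\Omega$. Fix any $y\in\Omega$ and any $w\in T(y)$. Applying the defining inequality of $\sol(T,\Omega)$ at the point $x=y$ gives $\DRLip{\vbar}{y-\xbar}\geq 0$. By monotonicity of $T$ applied to the pairs $(\xbar,\vbar)\in\gph T$ and $(y,w)\in\gph T$,
\[
\DRLip{w-\vbar}{y-\xbar}\geq 0,\qquad\text{hence}\qquad \DRLip{w}{y-\xbar}\geq \DRLip{\vbar}{y-\xbar}\geq 0.
\]
Since $y\in\Omega$ and $w\in T(y)$ were arbitrary, $\xbar\in\sol^M(T,\Omega)$.

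\emph{Part \eqref{l:minty-weak ii}.} Let $\xbar\in\sol^M(T,\Omega)$ and fix $x\in\Omega$. We must produce $w_x\in T(\xbar)$ with $\DRLip{w_x}{x-\xbar}\geq 0$. The standard device is to move toward $x$ along the segment in $\Omega$: for $t\in(0,1]$ set $x_t\equiv\xbar+t(x-\xbar)\in\Omega$ by convexity of $\Omega$. For each $t$ choose (if $T(x_t)\neq\emptyset$; otherwise the Minty inequality is vacuous and one argues separately, or one uses that points near $\xbar$ lie in $\DRLdom T$) some $w_t\in T(x_t)$; the Minty property at $y=x_t$ gives $\DRLip{w_t}{x_t-\xbar}\geq 0$, i.e. $t\,\DRLip{w_t}{x-\xbar}\geq 0$, so $\DRLip{w_t}{x-\xbar}\geq 0$ for every $t$. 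Now let $t\downarrow 0$: since $x_t\to\xbar$ and $T$ is locally bounded, the net $\{w_t\}$ is eventually bounded, so it has a cluster point $w_x=\lim_{k} w_{t_k}$ for some $t_k\downarrow 0$; since the graph of $T$ is closed and $(x_{t_k},w_{t_k})\to(\xbar,w_x)$, we get $w_x\in T(\xbar)$, and passing to the limit in $\DRLip{w_{t_k}}{x-\xbar}\geq 0$ yields $\DRLip{w_x}{x-\xbar}\geq 0$. Hence $\xbar\in\sol^W(T,\Omega)$.

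\emph{Main obstacle.} The one delicate point is part \eqref{l:minty-weak ii}: making sure the approximating points $x_t$ actually lie in $\DRLdom T$ so that $T(x_t)\neq\emptyset$ and a selection $w_t$ exists, and then extracting a convergent subnet whose limit lands back in $T(\xbar)$. Local boundedness of $T$ at $\xbar$ gives the boundedness needed for the extraction, and graph-closedness (equivalently outer semicontinuity, by Definition \ref{d:osc} and \cite[Theorem 5.7]{VA}) delivers $w_x\in T(\xbar)$; local boundedness also implies $\xbar\in\intr(\DRLdom T)$ in the relevant relative sense, which resolves the domain issue. Parts \eqref{l:minty-weak i} and the convexity claim are routine.
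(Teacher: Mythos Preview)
Your proof is correct and follows essentially the same route as the paper: the paper verifies convexity by a direct two-point computation (your intersection-of-half-spaces formulation is the same observation rephrased), dispatches part \eqref{l:minty-weak i} by monotonicity exactly as you do, and for part \eqref{l:minty-weak ii} uses the identical segment construction $y_n=\xbar+\tfrac{1}{n}(x-\xbar)$, local boundedness to extract a convergent subsequence, and graph closedness to place the limit in $T(\xbar)$. One caveat: your closing remark that local boundedness forces $\xbar\in\intr(\DRLdom T)$ is not true in general (the empty set is bounded), so the nonemptiness of $T(x_t)$ is not actually resolved by that---but the paper glosses over the same point, simply writing ``for any $v_n\in T(y_n)$'' without further comment.
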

\proof
Let $\xbar_1, \xbar_2 \in \sol^M (T, \Omega)$ and let $y\in \Omega$. Then for any $v\in T(y)$,
\begin{eqnarray}
\label{eq1}\langle  v, y-\xbar_1 \rangle &\geq & 0\\
\label{eq2}\langle  v, y-\xbar_2 \rangle &\geq & 0.
\end{eqnarray}
Now multiplying \eqref{eq1} with $\lambda$ and \eqref{eq2} with $(1-\lambda)$ where $0\leq\lambda\leq 1$, we have 
\begin{equation}
 \langle v, y-(\lambda \xbar_1+(1-\lambda)\xbar_2) \rangle\geq 0.
\end{equation}
Since $y\in \Omega$ was chosen arbitrarily, we have
\begin{equation}
 \lambda \xbar_1+(1-\lambda)\xbar_2\in \sol^M (T, \Omega).
\end{equation}
Hence $\sol^M (T, \Omega)$ forms a convex set.\\\\ 
Part \eqref{l:minty-weak i}. By invoking the monotonicity of $T$, it is simple to show that
\begin{equation}
 \sol (T, \Omega)\subseteq \sol^M (T, \Omega).
 \end{equation}
Part \eqref{l:minty-weak ii}. Let $\xbar \in \sol^M(T,\Omega))$. Then for any $y\in \Omega$ and $v\in T(y)$
\begin{equation}
 \langle v, y-\xbar \rangle \geq 0.
\end{equation}
Let us construct the sequence \[y_n=\xbar+\frac{1}{n}(x-\xbar),\]
where $x\in \Omega$ is a fixed but arbitrary point. Of course we have
$y_n\in \Omega$, since $\Omega$ is a closed convex set. Hence for any $v_n\in T(y_n)$
we have
\begin{equation}
 \langle v_n, y_n-\xbar \rangle \geq 0, \quad \forall n\in \Nbb
\end{equation}and hence \begin{equation}
 \langle v_n, x-\xbar \rangle \geq 0, \quad \forall n\in \Nbb.
\end{equation}
As $T$ is locally bounded, by noting that $y_n\rightarrow \xbar$, we can conclude that $v_n$ is 
a bounded sequence. Without loss of generality let us assume that $v_n\rightarrow v_x$. Note that the limit must depend on
the chosen $x$. Hence we have 
\begin{equation}
 \langle v_x, x-\xbar \rangle \geq 0.
\end{equation}
Further, as $T$ is graph closed, $v_x\in T(\xbar)$. Now, this limit $v_x$ will change with $x$. Since $x\in \Omega$ is arbitrary, the above argument
can be repeated for each $x\in \Omega$. This shows that $\xbar\in \sol^W(T, \Omega)$.\hfill$\Box$

The next result determines appropriate conditions on $T$ that guarantee that
\[ \sol^M (T, \Omega)\subseteq\sol (T, \Omega),\]
from which it follows by Lemma \ref{l:minty-weak} that $\sol (T, \Omega)$ is a convex set.
\begin{thm}
Let $T:\Rbb^n\rightrightarrows\Rbb^n$ be a non-empty, convex and compact valued map. Further, assume that $T$ is monotone, locally 
bounded and graph closed. Then $sol(GVI(T,\Omega))$ is a convex set.
\end{thm}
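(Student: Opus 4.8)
The plan is to reduce everything to the single inclusion $\sol^M(T,\Omega)\subseteq\sol(T,\Omega)$: once this is available, combining it with Lemma~\ref{l:minty-weak}\eqref{l:minty-weak i} (which needs only monotonicity) gives $\sol(T,\Omega)=\sol^M(T,\Omega)$, and convexity of the right-hand set is precisely the first assertion of Lemma~\ref{l:minty-weak}. So the entire content of the theorem is the reverse inclusion $\sol^M(T,\Omega)\subseteq\sol(T,\Omega)$, and that is where the hypotheses (convex- and compact-valuedness, local boundedness, graph closedness) get used.

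To prove that inclusion I would fix $\xbar\in\sol^M(T,\Omega)$ and first invoke Lemma~\ref{l:minty-weak}\eqref{l:minty-weak ii} (legitimate since $T$ is locally bounded and graph closed) to obtain $\xbar\in\sol^W(T,\Omega)$: for every $x\in\Omega$ there is some $w_x\in T(\xbar)$ with $\langle w_x,x-\xbar\rangle\geq 0$. For each $x\in\Omega$ set
\[
C_x\;\equiv\;\{\,v\in T(\xbar):\ \langle v,\,x-\xbar\rangle\geq 0\,\},
\]
a nonempty, closed, convex subset of the compact set $T(\xbar)$. Since any $v\in\bigcap_{x\in\Omega}C_x$ certifies $\xbar\in\sol(T,\Omega)$, and the $C_x$ are closed subsets of the compact space $T(\xbar)$, by the finite intersection property it suffices to show that every finite subfamily $C_{x_1},\dots,C_{x_k}$ has nonempty intersection.

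For the finite case I would run a minimax argument. With $\Delta\equiv\{\lambda\in\Rbb^k_+:\sum_i\lambda_i=1\}$, consider the bilinear function $\phi(v,\lambda)=\sum_i\lambda_i\langle v,x_i-\xbar\rangle=\langle v,\,\sum_i\lambda_i x_i-\xbar\rangle$ on the compact convex sets $T(\xbar)$ and $\Delta$. Sion's minimax theorem gives $\max_{v\in T(\xbar)}\min_{\lambda\in\Delta}\phi(v,\lambda)=\min_{\lambda\in\Delta}\max_{v\in T(\xbar)}\phi(v,\lambda)$. For each fixed $\lambda\in\Delta$, convexity of $\Omega$ puts $x_\lambda\equiv\sum_i\lambda_i x_i$ in $\Omega$, so $\max_{v\in T(\xbar)}\phi(v,\lambda)=\max_{v\in T(\xbar)}\langle v,x_\lambda-\xbar\rangle\geq 0$ because $\xbar\in\sol^W(T,\Omega)$; hence the common minimax value is $\geq 0$, which yields $v^\ast\in T(\xbar)$ with $\min_i\langle v^\ast,x_i-\xbar\rangle\geq 0$, i.e. $v^\ast\in C_{x_1}\cap\cdots\cap C_{x_k}$. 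This establishes the finite intersection property, hence $\bigcap_{x\in\Omega}C_x\neq\emptyset$, hence $\sol^M(T,\Omega)\subseteq\sol(T,\Omega)$.

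I expect the genuinely nontrivial step to be the passage from the $x$-dependent selections $w_x$ furnished by the weak $GVI$ property to a \emph{single} $v\in T(\xbar)$ good for all $x$ simultaneously; compactness of the values $T(\xbar)$ is exactly what supplies the finite intersection property, and convexity of $T(\xbar)$ (together with convexity of $\Omega$) is exactly what licenses the minimax swap. One might be tempted to apply Sion's theorem directly to $\Omega\times T(\xbar)$, but the possible unboundedness of $\Omega$ obstructs that, which is why I first reduce to finite subfamilies $\{x_1,\dots,x_k\}$ and work on the compact simplex $\Delta$ instead.
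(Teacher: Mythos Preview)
Your proof is correct and shares the paper's essential strategy: reduce convexity of $\sol(T,\Omega)$ to the inclusion $\sol^M(T,\Omega)\subseteq\sol(T,\Omega)$, pass through $\sol^W(T,\Omega)$ via Lemma~\ref{l:minty-weak}, and then upgrade the $x$-dependent selections $w_x\in T(\xbar)$ furnished by the weak GVI property to a single $v^*\in T(\xbar)$ by a minimax argument exploiting the convexity and compactness of $T(\xbar)$. The difference lies only in how the minimax is executed. The paper packages the weak GVI property as $\ghat(\xbar)=\sup_{y\in\Omega}\inf_{v\in T(\xbar)}\langle v,\xbar-y\rangle=0$ (the Aussel--Dutta gap function) and applies Sion's theorem \emph{directly} to the pair $(T(\xbar),\Omega)$ to swap the $\sup$ and $\inf$, then uses compactness of $T(\xbar)$ once more to attain the resulting infimum at some $v^*$. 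Your worry that unboundedness of $\Omega$ obstructs this direct swap is unfounded: Sion's theorem requires compactness of only \emph{one} of the two convex sets---here $T(\xbar)$---so the paper's one-step swap is legitimate, and your detour through the finite intersection property and the compact simplex $\Delta$, while perfectly valid, is not needed. Both routes invoke exactly the same hypotheses at exactly the same junctures; yours is simply a more hands-on unpacking of what Sion already delivers in one stroke.
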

\proof Using the fact that $T$ is compact-valued, Aussel and Dutta \cite{AusselDutta11} had constructed the following gap function for 
$WGVI(T, \Omega)$. This given as
\begin{equation*}
 \ghat(x)=\sup_{y\in \Omega}\inf_{v\in T(x)}\langle  v, x-y\rangle.
\end{equation*}
Let $\xbar\in \sol^M(T, \Omega)$. Note that any $\xbar\in \sol^W(T, \Omega)$ satisfies $\ghat(\xbar)=0$. Since $\sol^M(T, \Omega)\subseteq \sol^W(T, \Omega)$ from Lemma \ref{l:minty-weak}, we can now write
\begin{equation*}
 \ghat(\xbar)=0=\sup_{y\in \Omega}\inf_{v\in T(\xbar)}\langle v, \xbar-y \rangle.
\end{equation*}
Now since $\xbar$ is fixed, the function $v\mapsto \langle v, \xbar-y\rangle$ is linear for each fixed $y$
 and the function $y\mapsto \langle v, \xbar-y\rangle$ is affine (and hence concave) for each fixed $v$.
Hence, as $T(\xbar)$ is convex and compact valued we can invoke the famous Sion's minimax theorem to conclude that 
\begin{equation*}
 0=\inf_{v\in T(\xbar)}\sup_{y\in\Omega}\langle v, \xbar-y\rangle 
\end{equation*}
Let \begin{equation*}
     \varsigma(v,\xbar)=\sup_{y\in \Omega}\langle v, \xbar-y \rangle
    \end{equation*}
    Note that for each $y\in \Omega$, as $\langle v, \xbar-y \rangle$ is linear we conclude that $\varsigma(v, \xbar)$ 
    is in convex in $v$ and lower semicontinous. Moreover, $\varsigma(v, \xbar)$ is a proper function since 
    \[0=\inf_{v\in T(\xbar)}\varsigma(v,\xbar)\]
    Thus as $T(\xbar)$ is convex and compact we conclude the existence of $v^*\in T(\xbar)$ such that
    \begin{equation*}
     0=\varsigma(v^*, \xbar)
    \end{equation*}
Hence \begin{equation*}
       \sup_{y\in \Omega}\langle v^*, \xbar-y\rangle=0
      \end{equation*}
Thus for all $y\in \Omega$ we have
\begin{equation*}
 \langle v^*, \xbar-y \rangle\leq 0
\end{equation*}
or
\begin{equation*}
 \langle v^*, y-\xbar \rangle\geq 0
\end{equation*}
This shows that $\xbar\in \sol(T,\Omega)$ and hence $\sol^M(T,\Omega)\subseteq \sol(T,\Omega)$.
Using Lemma \ref{l:minty-weak} we conclude that 
\begin{equation*}
 \sol^M(T,\Omega)= \sol^W(T,\Omega)
\end{equation*}
Therefore, again from Lemma \ref{l:minty-weak} $\sol(T,\Omega)$ is a convex set.\hfill$\Box$\\

Next, we determine conditions that guarantee boundedness of $\sol(T,\Omega)$. 
\begin{propn}[existence and boundedness of $\sol(T,\Omega)$]
\label{t:boundedness of solGVI}
Let $\Omega\subset\DRLRn$ be closed convex and nonempty, let $\DRLmmap{T}{\DRLRn}{\DRLRn}$
be maximal monotone  with $\DRLdom T=\Omega$.  
The set of solutions to $\gvi(T,\Omega)$  is  nonempty and bounded
if and only if 
\begin{equation}\label{hyp:existence and bounded}
   w\in \Omega^\infty\setminus\{0\}~\implies ~\exists x\in \Omega\mbox{ with }
\DRLip{v}{w}>0~\mbox{ for some }v\in T(x).
\end{equation}
If $\Omega$ is bounded, then $\Omega^\infty=\{0\}$ and the implication holds trivially. 
\end{propn}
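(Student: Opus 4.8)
The plan is to reduce the statement to Lemma~\ref{t:S_0 compact} by showing that $\gvi(T,\Omega)$ is equivalent to a single-valued variational inequality $\vi(F_0,\Omega)$ for a suitably chosen continuous monotone $F_0$, and then checking that condition \eqref{hyp:existence and bounded} corresponds exactly to condition \eqref{hyp:F4} for $F_0$. Since $T$ is maximal monotone with $\DRLdom T=\Omega$, the sum $T+N_\Omega$ is maximal monotone, and $\xbar\in\sol(T,\Omega)$ if and only if $0\in T(\xbar)+N_\Omega(\xbar)$. A natural choice of $F_0$ is the \emph{minimal-norm selection} $F_0(x)\equiv(T(x))^\circ$, the projection of $0$ onto the closed convex set $T(x)$ (which is nonempty for $x\in\Omega$); since $T$ is maximal monotone, this is the Yosida-type minimal selection and it is continuous on $\intr(\DRLdom T)$, but continuity on the boundary can fail, so a cleaner route is to work directly with the resolvent.

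Concretely, I would use the classical fact that $0\in T(\xbar)+N_\Omega(\xbar)$ is equivalent to $\xbar$ being a zero of the maximal monotone operator $A\equiv T+N_\Omega$, hence a fixed point of the resolvent $J_A=(I+A)^{-1}$, which is firmly nonexpansive and everywhere single-valued. The set of zeros $A^{-1}(0)=\sol(T,\Omega)$ is then closed and convex, and one can invoke the standard characterization (e.g. Brezis, or \cite[Exercise 12.52]{VA} applied to the single-valued monotone map $F_0$ obtained as above on $\Omega$): $A^{-1}(0)$ is nonempty and bounded iff the recession/horizon function of $A$ is positive in every nonzero direction of $\Omega^\infty$. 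The translation step is: for $w\in\Omega^\infty\setminus\{0\}$, the ``asymptotic'' behaviour of $A$ in direction $w$ being positive means precisely that there is some $x\in\Omega$ and some $v\in T(x)$ with $\DRLip{v}{w}>0$ — because $N_\Omega$ contributes nothing useful in recession directions of $\Omega$ (for $x\in\Omega$ and $w\in\Omega^\infty$, one has $\DRLip{\nu}{w}\le 0$ for all $\nu\in N_\Omega(x)$, so the only way to get a strictly positive pairing against $w$ is through the $T$-component), which is exactly \eqref{hyp:existence and bounded}.

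For the ``if'' direction (existence), I would run the standard truncation argument: intersect $\Omega$ with a large ball $\Bbb(0,r)$, solve the $\gvi$ on the compact set $\Omega_r\equiv\Omega\cap\Bbb(0,r)$ (which has a solution $x_r$ by the Debrunner–Flor / Browder existence theorem for maximal monotone maps on compact convex sets), and then use condition \eqref{hyp:existence and bounded} to show the solutions $x_r$ stay in a bounded region: if $\|x_r\|\to\infty$ along a subsequence with $x_r/\|x_r\|\to w\in\Omega^\infty\setminus\{0\}$, a monotonicity estimate against the point $x$ from \eqref{hyp:existence and bounded} forces $\DRLip{v_r}{x-x_r}$ to become positive for large $r$, contradicting that $x_r$ solves the $\gvi$ on $\Omega_r$ once $x\in\Omega_r$. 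Hence for $r$ large the solution $x_r$ lies in the interior of the ball and is a genuine solution of $\gvi(T,\Omega)$; boundedness of the whole solution set follows from the same estimate. The ``only if'' direction is the contrapositive: if \eqref{hyp:existence and bounded} fails there is $w\in\Omega^\infty\setminus\{0\}$ with $\DRLip{v}{w}\le 0$ for all $x\in\Omega$, $v\in T(x)$; then from any solution $\xbar$ the whole ray $\xbar+\tau w$ can be shown to consist of solutions (using monotonicity of $T$ to upgrade $\le 0$ to the required inequality), so the solution set is unbounded — and if there is no solution at all the statement's biconditional is vacuously consistent, so one only needs to handle the nonempty case. The last sentence, that bounded $\Omega$ gives $\Omega^\infty=\{0\}$ and hence the implication holds vacuously, is immediate from the definition \eqref{e:Omegainf}.

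The main obstacle I anticipate is the recession-direction translation, i.e.\ making rigorous that the horizon behaviour of the set-valued sum $T+N_\Omega$ reduces cleanly to \eqref{hyp:existence and bounded} without extra regularity on $T$ beyond maximal monotonicity and $\DRLdom T=\Omega$; in particular one must be careful that $T$ need not be locally bounded or single-valued, so the estimates should be phrased purely in terms of arbitrary selections $v\in T(x)$ and the monotonicity inequality, never in terms of continuity of a selection. A secondary technical point is justifying that the truncated problems have solutions and that the limiting argument produces a solution of the \emph{un}truncated problem — this is standard but relies on $\Omega_r$ being nonempty for large $r$ and on $N_{\Omega_r}(x)\supseteq N_\Omega(x)$ at interior points of the ball, which is where one uses that $x_r$ ends up in $\intr\Bbb(0,r)$.
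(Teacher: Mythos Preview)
Your proposal contains the key insight---that for $w\in\Omega^\infty$ and $z\in N_\Omega(x)$ one has $\langle z,w\rangle\le 0$, so the positivity condition on $T+N_\Omega$ reduces to one on $T$ alone---and this is precisely the translation step the paper uses. But the paper's route is far more direct: it applies \cite[Theorem~12.51]{VA} straight to the maximal monotone operator $A=T+N_\Omega$ (noting $\DRLdom A=\Omega$ and $\sol(T,\Omega)=A^{-1}(0)$), which already gives the nonempty-and-bounded criterion in terms of $\DRLrange A$, and then does the translation you identified. No minimal-norm selection, no resolvent, no truncation is needed; the black-box theorem handles set-valued maximal monotone operators directly, whereas you are trying to reduce to the single-valued Exercise~12.52, which forces the detours you yourself flag as problematic.

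Your truncation argument for the ``if'' direction is a legitimate elementary alternative and would work (the monotonicity estimate you sketch is essentially correct once written out). However, your ``only if'' ray argument has a gap: from $\langle v,w\rangle\le 0$ for all $v\in T(y)$, $y\in\Omega$, you can conclude that $\xbar+\tau w$ is a \emph{Minty} solution whenever $\xbar$ is, but upgrading this to a strong solution of $\gvi(T,\Omega)$ requires either compactness/local-boundedness arguments (as in Lemma~\ref{l:minty-weak} and the subsequent theorem) or a separate appeal to maximal monotonicity to pass from $\sol^M$ back to $\sol$. You do not address this, and the phrase ``using monotonicity of $T$ to upgrade $\le 0$ to the required inequality'' does not actually close the gap. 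The paper sidesteps this entirely by letting Theorem~12.51 handle both directions at once.
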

\begin{proof}
   The proof follows from 
\cite[Theorem 12.51]{VA} in a minor extension of \cite[Exercise 12.52]{VA}.  We show that
\eqref{hyp:existence and bounded} is equivalent to the existence of 
$v\in \DRLrange (T + N_\Omega) $ with  $\DRLip{v}{w}>0$ for each nonzero $w\in \Omega^\infty$.  
Existence and boundedness of the solution set to $\gvi(T,\Omega)$ then follows directly from 
\cite[Theorem 12.51]{VA}, since the solution set of $ \gvi (T, \Omega) $ coincides with the set
$ ( T + N_\Omega)^{-1} (0) $.\\

Indeed,  if $\Omega^\infty\setminus\{0\}$ is empty then $\Omega$ is bounded and there is nothing to prove.  
Suppose, then, that $w\in \Omega^\infty\setminus\{0\}$.  For each $x\in \rint \Omega$ and for all $\tau>0$ we can write 
$w=\frac{(x_\tau-x)}{\tau}$ for  some $x_\tau\in \Omega$, hence $w\in T_{\Omega}(x)$, the {\em tangent cone} to 
$\Omega$ for all $x\in \Omega$ \cite[Definition 6.25 and Corollary 6.29]{VA}.  Hence 
\begin{equation}\label{e:tangent cone}
(\forall x\in \Omega)\quad   \DRLip{w}{z}\leq 0\quad\mbox{ for all }z\in N_{\Omega}(x).
\end{equation}
Now, by \cite[Theorem 12.51]{VA} $(T+ N_\Omega)^{-1} (0)$ -- that is the solution set to $\gvi(T,\Omega)$ -- 
is nonempty and bounded 
if and only if for each nonzero $w\in (\DRLdom ( T + N_\Omega))^\infty=\Omega^\infty$ there exists 
$ \hat{v} \in \DRLrange (T + N_\Omega)$ 
with  $\DRLip{\hat{v}}{w}>0$.   This means that there exists $ x \in \DRLdom T \cap \Omega $, 
$ v \in T(x) $  and $ z \in N_\Omega(x) $ such that
$ \hat{v} = v + z $ and 
\begin{eqnarray*}
\langle v + z , w \rangle > 0 .
\end{eqnarray*}
Since $ \langle z, w \rangle \le 0 $ it follows that $ \langle v, w \rangle > 0 $. 
This is exactly the statement in \eqref{hyp:existence and bounded}. 
\end{proof}

To guarantee maximal monotonicity 
of the related set-valued mapping, which is central to the application of 
Proposition \ref{t:boundedness of solGVI},    
we will restrict our attention to regularizing functions $\varphi$ satisfying the following 
assumption.
\begin{center}
\fbox{%
        \addtolength{\linewidth}{-2\fboxsep}%
        \addtolength{\linewidth}{-2\fboxrule}%
        \begin{minipage}{\linewidth}%
\begin{assumption}\label{hyp:GVIepsilon}
$~$
\begin{enumerate}[(i)]
\item
\label{hyp:varphi} $\DRLmap{\varphi}{\Omega}{\Rbb}$ is continuous and convex.  
\item
\label{hyp:rint epsilon} 
$   0\in \rint(\DRLdom \DRLsd\varphi - \Omega)$.
\end{enumerate}
\end{assumption}
 \end{minipage}%
}
\end{center}

An understanding of convergence of solutions to 
\eqref{e:GVIe} to the unregularized monotone problem \eqref{e:VI} is achieved through 
the solution set to the following generalized variational inequality.
\boxedtxt{
Find $\xbar\in \sol(F,\Omega)$ such that  
\begin{equation*}\tag{GVI$\varphi$}\label{e:GVIphi}
\exists~ v\in \DRLsd\varphi(\xbar)\mbox{ with } \DRLip{v}{x-\xbar}\geq 0 \quad\forall~x\in \sol(F,\Omega).
\end{equation*}
}
To achieve compactness of the problem \eqref{e:GVIphi} we will require the following assumption. 
\begin{center}
\fbox{%
        \addtolength{\linewidth}{-2\fboxsep}%
        \addtolength{\linewidth}{-2\fboxrule}%
        \begin{minipage}{\linewidth}%
\begin{assumption}\label{hyp:GVIvarphi}
$~$
\begin{enumerate}[(i)]
   \item
\label{hyp:1} 
$\sol(F,\Omega)$ is nonempty and closed.
\item
\label{hyp:rint_varphi} 
$   0\in \rint(\DRLdom \DRLsd\varphi - \sol(F,\Omega))$.
% \Omegaor. 24.4 BauschkeCombettes
\end{enumerate}
\end{assumption}
 \end{minipage}%
}
\end{center} 
\begin{defn}
 The indicator function $\iota_C$ of a set $C$ is defined by
 \begin{equation*}
  \iota_C(x)=
   \begin{cases}
 0,& if~x \in C,\\ +\infty ,& x \not\in C.
 \end{cases}
 \end{equation*}
\end{defn}
Note that for a closed convex set $C$, the subdifferential of the indicator function is the normal cone
\begin{equation*}
N_C(x)= \partial (\iota_C)(x)=\begin{cases}
 \{v \in \mathbb{R}^n: \langle v, y-x \rangle \leq 0 \quad \forall y\in C\},&if~x \in C,\\ \emptyset,& x \not\in C.
 \end{cases}
\end{equation*}
\begin{cor}\label{t:Sphi compact}
Let $S_0$ denote the solution set to $\vi(F,\Omega)$. 
Under Assumptions \ref{hyp:F}, \ref{hyp:GVIepsilon} and \ref{hyp:GVIvarphi}, 
the solution set $\sol(\partial\varphi, S_0)$ 
is nonempty, bounded and convex if and only if 
for each $w\in S_0^\infty\setminus\{0\}$, if any, there is an $x\in S_0$ with 
\begin{equation}\label{hyp:4}
\DRLip{v}{w}>0 \mbox{ for some } v\in \DRLsd\varphi(x).  
\end{equation}
\end{cor}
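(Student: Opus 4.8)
The plan is to read $\sol(\partial\varphi,S_0)$ as the solution set of the generalized variational inequality $\gvi(\partial\varphi,S_0)$, i.e.\ of \eqref{e:GVI} with operator $T=\partial\varphi$ and with $S_0$ in place of the feasible set $\Omega$, and then to run the argument of Proposition \ref{t:boundedness of solGVI} with $S_0$ replacing $\Omega$. Exactly as in the proof of that proposition, $\xbar$ solves $\gvi(\partial\varphi,S_0)$ if and only if $0\in\partial\varphi(\xbar)+N_{S_0}(\xbar)$, so $\sol(\partial\varphi,S_0)=(\partial\varphi+N_{S_0})^{-1}(0)$, and the whole matter reduces to applying \cite[Theorem 12.51]{VA} to the operator $\partial\varphi+N_{S_0}$.

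First I would assemble the hypotheses. By Assumption \ref{hyp:GVIvarphi}\eqref{hyp:1}, $S_0$ is nonempty and closed, and it is convex because $F$ is continuous and monotone --- indeed $S_0=\DRLset{x\in\Omega}{\DRLip{F(y)}{y-x}\ge 0\ \forall\,y\in\Omega}$ is the intersection of $\Omega$ with closed half-spaces. By Assumption \ref{hyp:GVIepsilon}\eqref{hyp:varphi}, $\varphi$ is proper, lower semicontinuous and convex (its domain $\Omega$ is closed and $\varphi$ is continuous there), so $\partial\varphi$ is maximal monotone. Since $\DRLdom\partial\varphi$ is in general not equal to $S_0$, the literal domain hypothesis of Proposition \ref{t:boundedness of solGVI} is unavailable, so I would instead invoke the more general maximal-monotonicity criterion recorded immediately after that proposition: Assumption \ref{hyp:GVIvarphi}\eqref{hyp:rint_varphi}, namely $0\in\rint(\DRLdom\partial\varphi-S_0)$, is precisely the statement that $\rint(\DRLdom\partial\varphi)\cap\rint(S_0)\neq\emptyset$, and hence $\partial\varphi+N_{S_0}$ is maximal monotone. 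In particular $\sol(\partial\varphi,S_0)=(\partial\varphi+N_{S_0})^{-1}(0)$ is closed and convex, since the set of zeros of a maximal monotone operator is convex \cite{BauschkeCombettes}; this takes care of the convexity assertion, which in fact holds with no reference to \eqref{hyp:4}.

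It remains to decide when $(\partial\varphi+N_{S_0})^{-1}(0)$ is nonempty and bounded. By \cite[Theorem 12.51]{VA} this is so if and only if for every nonzero $w\in(\DRLdom(\partial\varphi+N_{S_0}))^\infty$ there is $\hat v\in\DRLrange(\partial\varphi+N_{S_0})$ with $\DRLip{\hat v}{w}>0$. Two identifications bring this into the stated form. First, $\DRLdom(\partial\varphi+N_{S_0})=\DRLdom\partial\varphi\cap S_0$, and its recession cone equals $S_0^\infty$; this is the one slightly delicate point and follows from Assumption \ref{hyp:GVIvarphi}\eqref{hyp:rint_varphi} by a routine relative-interior and line-segment argument showing $\cl(\DRLdom\partial\varphi\cap S_0)=S_0$. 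Second, writing $\hat v=v+z$ with $v\in\partial\varphi(x)$ and $z\in N_{S_0}(x)$, and using $\DRLip{z}{w}\le 0$ (because $w\in S_0^\infty$ gives $x+\tau w\in S_0$ for all $\tau\ge0$), the existence of such a $\hat v$ is equivalent to the existence of $x\in S_0$ and $v\in\partial\varphi(x)$ with $\DRLip{v}{w}>0$; the converse implication is immediate on taking $z=0\in N_{S_0}(x)$. This last statement is exactly \eqref{hyp:4}, and the decomposition is the same one used inside the proof of Proposition \ref{t:boundedness of solGVI}. I expect the main obstacle to be the recession-cone identity $(\DRLdom\partial\varphi\cap S_0)^\infty=S_0^\infty$: the corollary is essentially Proposition \ref{t:boundedness of solGVI} transported from $\Omega$ to the potentially much smaller set $S_0$, and it is precisely Assumption \ref{hyp:GVIvarphi}\eqref{hyp:rint_varphi} that legitimizes both that transport and the maximal monotonicity of $\partial\varphi+N_{S_0}$.
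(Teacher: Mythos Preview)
Your proposal is correct and follows essentially the same route as the paper: both identify $\sol(\partial\varphi,S_0)=(\partial\varphi+N_{S_0})^{-1}(0)$, establish maximal monotonicity of the sum via the constraint qualification in Assumption~\ref{hyp:GVIvarphi}\eqref{hyp:rint_varphi}, and then invoke the Rockafellar--Wets criterion (the paper via Proposition~\ref{t:boundedness of solGVI}, you directly via \cite[Theorem~12.51]{VA}). The only notable divergence is the convexity argument---the paper observes $\sol(\partial\varphi,S_0)=\argmin_{S_0}\varphi$ and appeals to convex optimization, whereas you use that the zero set of a maximal monotone operator is convex---and you are more explicit than the paper about the recession-cone identity $(\DRLdom\partial\varphi\cap S_0)^\infty=S_0^\infty$, which the paper's invocation of Proposition~\ref{t:boundedness of solGVI} tacitly assumes.
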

\begin{proof}
    By Assumption \ref{hyp:GVIvarphi}\eqref{hyp:1} 
the solution set $S_0$ is closed and nonempty.  Furthermore, 
$S_0$ is convex for $\Omega$ convex by the monotonicity 
and continuity of $F$ (Assumption \ref{hyp:F}\eqref{hyp:F2}).  The solution set $\sol(\partial \varphi, S_0)$ can 
be characterized as $T^{-1}(0)\equiv\DRLset{x\in\DRLRn}{0\in T(x)}$ where 
$T\equiv\DRLsd\varphi+ N_{S_0}$.  
For $S_0$ closed convex, the 
normal cone mapping $N_{S_0}=\DRLsd\iota_{S_0}$ is  maximal monotone,  
and for $\varphi$ continuous and convex on $\Omega$ with 
\[
0\in \rint \left( \DRLdom \DRLsd\varphi - S_0\right)   
\subset\rint \left( \DRLdom \DRLsd\varphi - \Omega\right)
\]
(Assumptions \ref{hyp:GVIepsilon}\eqref{hyp:varphi} and \ref{hyp:GVIvarphi}\eqref{hyp:rint_varphi})
the operator $T$ is maximal monotone \cite[Corollary 24.4]{BauschkeCombettes}.   
That $\sol(\partial\varphi, S_0)$ is nonempty and bounded then follows from 
Proposition \ref{t:boundedness of solGVI}.  
To see that $\sol(\partial\varphi, S_0)$ is convex, 
note that  $\sol(\partial\varphi, S_0) = \argmin_{S_0}\varphi$, the solution set to a convex optimization 
problem, and thus   $\sol(\partial\varphi, S_0)$ is convex.  This completes the proof. 
\end{proof}

Condition \eqref{hyp:existence and bounded}  holds in particular for {\em coercive} mappings with 
respect to $\Omega$.
\begin{defn}[coercive mappings]\label{d:coercive} 
A mapping $\DRLmmap{T}{\Rbb^n}{\Rbb^n}$ is said to be {\em coercive with 
respect to $\Omega$} if, for any $x_0\in \Omega$ and for some $\gamma>0$,  
\begin{equation}\label{e:coercive T}
   \liminf_{\|x\|\to\infty}\frac{\DRLip{v}{x-x_0}}{\|x\|^\gamma}>0\quad \forall~v\in T(x).
\end{equation}
\end{defn}
The above definition uses the convention that the infimum over an empty set is $+\infty$ in the case that 
$T(x)=\emptyset$ (and in particular, if $\Omega$ is bounded and $\DRLdom T=\Omega$). 

\begin{propn}[existence and boundedness with coercivity]\label{t:coercive}
If ~$T$ satisfies \eqref{e:coercive T}, 
then \eqref{hyp:existence and bounded} holds.  
Moreover, if $T$ is
maximally monotone with $\DRLdom T=\Omega$ then
 \eqref{e:coercive T} is sufficient for   
$\sol(T,\Omega)$ to be nonempty and bounded.
\end{propn}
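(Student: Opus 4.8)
The plan is to establish the implication \eqref{e:coercive T} $\Rightarrow$ \eqref{hyp:existence and bounded} by a direct ``ray-pushing'' argument, and then to read off the second assertion from Proposition \ref{t:boundedness of solGVI}; the only substantive step is the first implication.

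First I would dispose of the trivial case: if $\Omega^\infty\setminus\{0\}=\emptyset$ there is nothing to prove, so fix a nonzero $w\in\Omega^\infty$ and an arbitrary $x_0\in\Omega$. By the definition \eqref{e:Omegainf} of the recession cone the whole ray $x_\tau:=x_0+\tau w$, $\tau\ge0$, lies in $\Omega$, and since $w\neq0$ the reverse triangle inequality gives $\|x_\tau\|\ge\tau\|w\|-\|x_0\|\to\infty$ as $\tau\to\infty$. Next I would unwind \eqref{e:coercive T}: reading it (with the empty-infimum convention of Definition \ref{d:coercive}) as the statement that $\liminf_{\|x\|\to\infty}\inf_{v\in T(x)}\DRLip{v}{x-x_0}/\|x\|^\gamma>0$, one extracts constants $c>0$ and $R>0$ such that $\DRLip{v}{x-x_0}\ge c\|x\|^\gamma$ for every $x\in\Omega$ with $\|x\|>R$ and every $v\in T(x)$. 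Picking $\tau$ large enough that $\|x_\tau\|>R$ and using $x_\tau-x_0=\tau w$ with $\tau>0$, this becomes $\DRLip{v}{w}\ge c\|x_\tau\|^\gamma/\tau>0$ for all $v\in T(x_\tau)$, so $x_\tau$ is the point required by \eqref{hyp:existence and bounded}.

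For the second assertion I would note that when $T$ is maximally monotone with $\DRLdom T=\Omega$ the hypotheses of Proposition \ref{t:boundedness of solGVI} hold, the values $T(x_\tau)$ along the ray are nonempty so the witness produced above is genuine, and the condition \eqref{hyp:existence and bounded} that Proposition \ref{t:boundedness of solGVI} requires is exactly the one just verified; hence $\sol(T,\Omega)=(T+N_\Omega)^{-1}(0)$ is nonempty and bounded. The only point needing care --- the ``main obstacle'', modest as it is --- is the degenerate possibility in the first part that $T$ has empty values outside a bounded set while $\Omega^\infty$ is still nontrivial, in which case no point of the ray witnesses \eqref{hyp:existence and bounded}; this is precisely why the crisp conclusion about nonemptiness and boundedness is stated only under $\DRLdom T=\Omega$, and why the bare implication is to be understood via the empty-infimum convention flagged after Definition \ref{d:coercive}. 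The remaining ingredients (the norm estimate, passing from a positive $\liminf$ to an eventual uniform lower bound) are routine.
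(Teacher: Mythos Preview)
Your proposal is correct and follows essentially the same approach as the paper: dispose of the case $\Omega^\infty=\{0\}$, push a point along the ray $x_0+\tau w$ to force $\|x_\tau\|\to\infty$, unpack the coercivity liminf into a uniform lower bound $\DRLip{v}{x-x_0}\ge c\|x\|^\gamma$ for large $\|x\|$, divide by $\tau$ to get $\DRLip{v}{w}>0$, and then invoke Proposition~\ref{t:boundedness of solGVI} for the second claim. Your write-up is in fact slightly cleaner than the paper's, which introduces the auxiliary $\hat T=T+N_\Omega$ without really using it, and your explicit discussion of the empty-value degeneracy (handled via the $\inf\emptyset=+\infty$ convention) is a helpful clarification the paper leaves implicit.
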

\begin{proof}
  Let us set $ \hat{T} = T + N_\Omega $ .  If $\Omega^\infty\setminus\{0\}$ is empty, then there is nothing to prove.  
So let $w\in \Omega^\infty\setminus\{0\}$ and define $x=x_0+w\tau\in \Omega$ for $x_0\in \rint\Omega$ and $\tau>0$.     
The inequality \eqref{e:coercive T} is equivalent to the existence of a constant $c>0$ such that for all 
$x$ large enough 
\begin{equation}\label{e:coercive T2}
   \DRLip{v}{x-x_0}\geq c\|x\|^\gamma\quad\forall~v\in  \hat{T}(x). 
\end{equation}
But this 
is equivalent  to 
\begin{equation}\label{e:coercive T3}
   \DRLip{v}{w}\geq \frac{c\|x_0+w\tau\|^\gamma}{\tau}>0\quad\forall~v\in \hat{T}(x). 
\end{equation}
The rest follows from Proposition \ref{t:boundedness of solGVI}.
\end{proof}\\

\subsection{Lagrange Multipliers for Variational Inequalities}
Our main result shows the relationship between exact regularization and Lagrange multipliers.  
What is meant by the latter is developed next.
To reduce clutter we will use the notation 
\boxedtxt{
\begin{eqnarray*}
&~&  S_0\equiv\sol(F,\Omega),\quad S_\epsilon\equiv\sol(T_\epsilon, \Omega)\mbox{ for } T_\epsilon\equiv F+\epsilon\DRLsd\varphi\\
&&S_\varphi\equiv\sol(\DRLsd\varphi,S_0)\und \SGeps\equiv\argmin_{\Omega}\{G+\varepsilon\varphi\}.
\end{eqnarray*}
\nonumber}

As noted in the proof of Corollary \ref{t:Sphi compact}, $S_\varphi$ is convex for $\varphi$ convex and $S_0$ convex.  
Moreover, for $ G $ defined by \eqref{e:dual gap func} (the dual gap function associated with $ \vi (F, \Omega) $), we 
have
\[
S_0=\{x\in\Omega~|~G(x)=0\}.
\]
Thus $\argmin_{S_0}\varphi$ is equivalent to the solution of the following convex programming problem
\begin{equation}\tag{$\mathcal{P}_{\varphi,G}$}\label{e:PphiG}
\underset{\substack{x \in \Omega}}{\mbox{minimize }} \varphi(x)\quad\mbox{subject to}\quad G(x) \leq 0.
\end{equation}
\noindent Problem \eqref{e:PphiG} is then a convex program whose solution set coincides with 
$S_\varphi$. 
as can be seen by the equivalence of $ GVI ( \partial \varphi, S_0 ) $  to the problem of 
minimizing the convex function $ \varphi $ over the convex set $ S_0 $.

The Lagrangian associated with \eqref{e:PphiG} is 
\begin{equation}\label{e:Lagrangian}
   L(x,\lambda)\equiv \varphi(x) + \lambda G(x)-\sigma_{\Rbb_-}(\lambda)+\iota_\Omega(x)
\end{equation}
where $\iota_\Omega$ is the indicator function of $\Omega$ and 
$\sigma_{\Rbb_-}$ is the {\em support function} -- equivalently, the Fenchel conjugate of the 
indicator function-- of the negative orthant.
The optimality condition for \eqref{e:PphiG} in Lagrangian form is then 
(see, for example \cite[Chapter 11, Section I]{VA})
\begin{equation}\label{e:Lagrange optimality}
0 \in \partial \varphi(\xbar) + \lambdabar \partial G(\xbar) + N_\Omega(\xbar)\quad\mbox{ for some }
\lambdabar\in N_{\Rbb_-}(G(\xbar)).
\end{equation}
Implicitly, we are assuming that $\partial G(\xbar)\neq \emptyset$.  
This leads naturally to the following definition.
\begin{defn}[Lagrange multiplier for variational inequalities]\label{d:Lagrange VI}
Let $S_0$ be the solution set to $\vi(F,\Omega)$ and 
$\DRLmap{\varphi}{\DRLRn}{\Rbb\cup\{+\infty\}}$.  Let $G$ be the dual gap function associated
with $\vi(F, \Omega)$ defined by \eqref{e:dual gap func}.
A Lagrange multiplier of the generalized variational inequality $\gvi(\partial\varphi, S_0) $ is a constant 
$ \lambda \ge 0 $ that is also a Lagrange multiplier of the convex programming problem \eqref{e:PphiG}, when 
it exists.
\end{defn}
Regarding existence, if 
$\argmax _{y \in \Omega}\langle F(y), \xbar - y \rangle \neq \emptyset $ then $ F(\ybar) \in \partial G(\xbar) $ where
$ \ybar \in  \mbox{argmax} _{y \in \Omega} \langle F(y), \xbar - y \rangle$. 
The argmax always exists if, for instance, $ \Omega $ is compact.  We will attain existence, instead, under 
less restrictive conditions. 
\begin{propn}\label{t:existence}
 Suppose $\Omega$ and $F$ satisfy Assumption \ref{hyp:F} and let $F$ be coercive on $\Omega$.
Then \begin{enumerate}[(i)]
        \item\label{t:existence i} $S_0$ is nonempty and bounded and
\item\label{t:existence ii}  the dual gap function 
$
G(x)=\sup_{y\in \Omega}\langle F(y), x-y\rangle   
$ 
is finite valued
for all $x\in\Omega$.
Consequently, the supremum is attained and $\partial G(x)\neq \emptyset$ for all $x\in\Omega$. 
\end{enumerate}
\end{propn}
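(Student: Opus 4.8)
The plan is to derive part (i) directly from the coercivity hypothesis via the machinery already in place, and then to leverage part (i) together with a squeeze argument to obtain part (ii).

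For part (i): since $F$ is coercive on $\Omega$ in the sense of Definition \ref{d:coercive}, I would first observe that $F$ (extended by $\emptyset$ off $\Omega$) is continuous and monotone by Assumption \ref{hyp:F}, so it is in particular maximally monotone on its domain $\Omega$. By Proposition \ref{t:coercive}, coercivity of $F$ implies condition \eqref{hyp:existence and bounded}, and since $\DRLdom F = \Omega$, that same proposition yields that $\sol(F,\Omega)$ is nonempty and bounded. (Equivalently one can invoke Lemma \ref{t:S_0 compact} after checking \eqref{hyp:F4}, which is exactly the specialization of \eqref{e:coercive T} obtained by dividing through by $\|x\|^\gamma$ and letting $\|x\|\to\infty$ along the ray $x_0+w\tau$.) This disposes of (i).

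For part (ii): fix $x\in\Omega$ and consider the concave function $y\mapsto \langle F(y), x-y\rangle$ on $\Omega$ whose supremum defines $G(x)$. The idea is to show this function is coercive in $y$, i.e. tends to $-\infty$ as $\|y\|\to\infty$ within $\Omega$, so that a maximizer exists on the closed set $\Omega$ and the supremum is finite and attained. Write $\langle F(y), x-y\rangle = \langle F(y), x - y_0\rangle - \langle F(y), y - y_0\rangle$ for a fixed $y_0\in\Omega$; the second term is bounded below by $c\|y\|^\gamma$ for $\|y\|$ large by \eqref{e:coercive T2} applied with $T=F$ (taking $x_0=y_0$ there), while the first term is controlled once we bound $\|F(y)\|$. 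Here monotonicity helps: $\langle F(y)-F(y_0), y-y_0\rangle\ge 0$ shows $\langle F(y),y-y_0\rangle\ge \langle F(y_0),y-y_0\rangle$, and combined with the lower coercive bound one gets that along any sequence $\|y_k\|\to\infty$ the quantity $\langle F(y_k), x-y_k\rangle \to -\infty$ whenever $\gamma\ge 1$; if $\gamma<1$ one must be slightly more careful, but the key point is that the coercive growth dominates because the competing inner product $\langle F(y), x-y_0\rangle$ grows no faster than $\|y\|^\gamma$ after using the monotone lower bound in reverse together with continuity of $F$ on the (finite) segment. Alternatively, and more cleanly: since $\sol(F,\Omega)\ne\emptyset$ by part (i) and $F$ is monotone, the dual gap function $G$ is a genuine gap function (by \cite[Theorem 2.3.5]{FacchineiPang}), so $G\ge 0$ on $\Omega$; an explicit upper bound of the type $G(x)\le\theta_{2\mu}(x)$ used in the excerpt for strongly monotone $F$ does not apply here, so I would instead argue finiteness pointwise by the coercivity-of-the-objective-in-$y$ argument above. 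Once $G(x)<\infty$ with the supremum attained at some $\ybar=\ybar(x)$, the remark preceding the proposition gives $F(\ybar)\in\partial G(x)$, so $\partial G(x)\ne\emptyset$ for every $x\in\Omega$.

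The main obstacle I anticipate is the coercivity-in-$y$ estimate in part (ii) when the coercivity exponent $\gamma$ in \eqref{e:coercive T} is less than $1$: one needs the $\|y\|^\gamma$ growth of $\langle F(y), y-y_0\rangle$ to beat the term $\langle F(y), x-y_0\rangle$, and a priori $\|F(y)\|$ could grow faster than any fixed power. The resolution is that $x-y_0$ is a \emph{fixed} vector, so $\langle F(y), x-y_0\rangle$ is a single coordinate-type functional of $F(y)$, and the coercivity inequality, which must hold for the specific direction $x-y_0$ as well (taking that as the "$w$" in the ray construction, after translating), pins down its growth; carefully bookkeeping which fixed points and directions the coercivity hypothesis is being applied to is where the argument has to be written out with care.
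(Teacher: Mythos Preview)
Your treatment of part (i) is correct and matches the paper's argument exactly: maximal monotonicity of $F$ on $\Omega$ plus Proposition \ref{t:coercive}.

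For part (ii), however, you have manufactured an obstacle that is not there, and your proposed resolution of it is not convincing. The coercivity condition in Definition \ref{d:coercive} holds for \emph{every} base point $x_0\in\Omega$. Since the point $x$ at which you evaluate $G(x)$ lies in $\Omega$, simply take $x_0=x$ in \eqref{e:coercive T}. Then
\[
\liminf_{\|y\|\to\infty}\frac{\langle F(y),\,y-x\rangle}{\|y\|^\gamma}>0,
\]
which immediately forces $\langle F(y),\,x-y\rangle = -\langle F(y),\,y-x\rangle \to -\infty$ along any sequence in $\Omega$ with $\|y\|\to\infty$. There is no need to introduce an auxiliary $y_0$, no cross term $\langle F(y),\,x-y_0\rangle$ to control, no appeal to bounds on $\|F(y)\|$, and the exponent $\gamma$ plays no special role. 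This is precisely what the paper does, phrased as a short contradiction: if $G(\xbar)=+\infty$ (or if the supremum were approached only along an unbounded sequence), one extracts $y^k$ with $\|y^k\|\to\infty$ and $\langle F(y^k),\,y^k-\xbar\rangle$ bounded above, which after division by $\|y^k\|^\gamma$ contradicts coercivity with base point $\xbar$.

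Your attempted fix at the end, invoking coercivity ``for the specific direction $x-y_0$'' to bound $\langle F(y),\,x-y_0\rangle$, does not work as stated: coercivity bounds $\langle F(y),\,y-x_0\rangle$ from below, not $\langle F(y),\,v\rangle$ for a fixed vector $v$, and there is no mechanism in the hypotheses to control the latter. So the decomposition route, as written, has a genuine gap; the remedy is to drop the decomposition entirely and choose the base point correctly.
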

\proof
\eqref{t:existence i}. Since $F$ is continuous and monotone with $\DRLdom F=\Omega$ it is, 
in fact, maximally monotone.  The statement then follows from Proposition \ref{t:coercive}. 

\eqref{t:existence ii} For the second statement, let us assume, on contrary, that $G(\xbar)=+\infty$ for some $\xbar\in \Omega$. Then, 
there exists a sequence $y^k\in \Omega$ such that $\lim_{k\rightarrow\infty}\langle F(y^k), \xbar-y^k\rangle=\infty$.
Since $\Omega$ is closed and $F$ is continuous on $\Omega$, it must be that $\|y^k\|\to\infty$ as $k\to\infty$.  
Thus, for $R_k\equiv\|y^k\|$,
\[
   -\infty =   - \lim_{k\rightarrow\infty}\langle F(y^k), \xbar-y^k\rangle
\geq \liminf_{y\in\Rbb^n\setminus\Bbb_{R_k},~ R_k\rightarrow\infty}\langle F(y), y-\xbar\rangle.
\]
Hence, for any fixed $\gamma>0$,
\begin{eqnarray*}
\liminf_{y\in\Rbb^n\setminus\Bbb_{R_k},~ R_k\rightarrow\infty}\frac{\langle F(y), y-\xbar\rangle}{\|y\|^\gamma}&\leq&   
- \lim_{k\rightarrow\infty}\frac{\langle F(y^k), \xbar-y^k\rangle}{\|y^k\|^\gamma}\leq 0
\end{eqnarray*}
which is a contradiction to the coercivity of $F$.\\\\
For a given $\xbar\in \Omega$, since $\Omega$ is closed, either the supremum in $G(\xbar)$ is achieved 
at some point $\ybar\in\Omega$, or it is achieved in the limit at some point in the asymptotic cone of $\Omega$.  
In the former case there is nothing to prove.  Assume, therefore that  there exists a sequence $(y^k)_{k\in\Nbb}$
on $\Omega$ 
with  $\|y^k\|\to\infty$ as $k\to\infty$ and 
$\lim_{k\rightarrow\infty}\langle F(y^k), \xbar-y^k\rangle=G(\xbar) <\infty$. 
This, however, contradicts the assumption that $F$ satisfies \eqref{e:coercive T}, so the supremum
must be attained on $\Omega$. 
\endproof

The next theorem is a transposition of \cite[Theorem 2.1]{FriedlanderTseng07} to the setting of 
generalized variational inequalities and illuminates the connection between exact regularization, 
the dual gap function and 
the existence of Lagrange multipliers for $\gvi(\partial\varphi, S_0)$.
\begin{thm}\label{t:solution sets}
Let $\Omega$,  $F$ satisfy Assumption \ref{hyp:F}. 
\begin{enumerate}[(i)]
\item \label{t:solution sets 1} If there exists $\varepsilonbar > 0 $ such that 
$ S_0 \cap S_{G_\varepsilonbar\varphi}  \neq \emptyset $, then $ \SGeps   \subset S_0 $
for all $ \varepsilon \in (0, \varepsilonbar)$.  If, moreover, $F$ is coercive with respect to $\Omega$, 
then $ \SGeps$ is bounded for all $ \varepsilon \in (0, \varepsilonbar)$. 

\item\label{t:solution sets 2} Let $\varphi$ satisfy Assumption \ref{hyp:GVIepsilon}.
If $ x \in \Omega $ then  $ x \in S_0 $ and $ x \not\in S_\varphi $ 
implies that $ x \not\in S_\varepsilon $ for all $ \varepsilon > 0 $.

\item\label{t:solution sets 3} Let $\varphi$ satisfy Assumption \ref{hyp:GVIepsilon}.
For any $ \varepsilon > 0 $ , $ S_0 \cap S_\varepsilon \subset S_\varphi$.

\item\label{t:solution sets 4} Let $F$, $\Omega$ and $\varphi$ together satisfy Assumptions \ref{hyp:GVIepsilon} and 
\ref{hyp:GVIvarphi} and let $F$ be coercive with respect to $\Omega$.  Then for all  $\varepsilon  > 0 $, $ S_0 \cap \SGeps  \subset S_\varphi $.

\item\label{t:solution sets 5} 
Let $\varphi$ satisfy Assumption \ref{hyp:GVIepsilon} and let $ \lambdabar \ge 0 $ be a Lagrange multiplier of 
$ \gvi(\partial \varphi, S_0)$. If 
$ \lambdabar = 0 $ then $ S_0 \cap S_\varepsilon = S_\varphi $. If $ \lambdabar > 0 $ 
and, in addition, Assumption \ref{hyp:GVIvarphi} holds for $F$ coercive with respect to $\Omega$, 
then $ S_\varphi = S_0 \cap\SGeps  $ for all 
$ \varepsilon \in  ( 0, \frac{1}{\lambdabar} ]$. 

\item\label{t:solution sets 6} Let $F$, $\Omega$ and $\varphi$ together satisfy Assumptions \ref{hyp:GVIepsilon} and 
\ref{hyp:GVIvarphi} and let $F$ be coercive with respect to $\Omega$.
Let the regularization 
parameter $ \varepsilonbar > 0 $ be such that 
$ S_0 \cap S_{G_\varepsilonbar\varphi}  \neq \emptyset $. Then $ \displaystyle\frac{1}{\varepsilonbar} $
is a Lagrange multiplier of $ \gvi( \partial\varphi, S_0) $ and $ S_0 \cap \SGeps   = S_\varphi $ 
for all $\epsilon\in (0,\epsilonbar]$ with $\SGeps = S_\varphi$ for all $\epsilon\in (0,\epsilonbar)$.
\end{enumerate}
\end{thm}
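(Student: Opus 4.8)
The plan is to route every item through the convex program \eqref{e:PphiG} --- minimize $\varphi$ over $\Omega$ subject to $G(x)\le 0$ --- whose feasible set is $S_0=\{x\in\Omega:G(x)=0\}$ and whose solution set is $S_\varphi$, together with its Lagrangian optimality condition \eqref{e:Lagrange optimality}; only part \eqref{t:solution sets 1} will instead rest on a direct comparison of values, in the spirit of \cite[Theorem~2.1]{FriedlanderTseng07}. Two elementary facts will be used throughout. First, $G$ is convex with $G\ge 0$ on $\Omega$, and since $F$ is monotone (hence pseudomonotone) and continuous, $G$ is a genuine gap function, so $x\in S_0$ is equivalent to $x\in\Omega$ and $G(x)=0$; moreover, for any $x,y\in S_0$ one has $\langle F(x),\,y-x\rangle=0$, a standard consequence of monotonicity and the two defining variational inequalities. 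Second, under coercivity of $F$, Proposition~\ref{t:existence} supplies $\DRLdom G=\Omega$, nonemptiness and boundedness of $S_0$, and $\partial G(x)\neq\emptyset$ for every $x\in\Omega$; together with the relative-interior conditions in Assumptions~\ref{hyp:GVIepsilon} and \ref{hyp:GVIvarphi} this legitimizes the subdifferential sum rule $\partial(G+\epsilon\varphi+\iota_\Omega)=\partial G+\epsilon\,\partial\varphi+N_\Omega$ --- the workhorse of parts \eqref{t:solution sets 4}--\eqref{t:solution sets 6}.

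For part \eqref{t:solution sets 1} I would fix $\xbar\in S_0\cap S_{G_{\epsilonbar\varphi}}$ and any $x_\epsilon\in\SGeps$ with $0<\epsilon<\epsilonbar$, then compare the value of $G+\epsilon\varphi$ at $x_\epsilon$ with its value at $\xbar$, and of $G+\epsilonbar\varphi$ at $\xbar$ with its value at $x_\epsilon$; using $G(\xbar)=0$ this sandwiches $\epsilonbar\,(\varphi(\xbar)-\varphi(x_\epsilon))\le G(x_\epsilon)\le\epsilon\,(\varphi(\xbar)-\varphi(x_\epsilon))$. Since $\epsilonbar>\epsilon$ this forces $\varphi(\xbar)\le\varphi(x_\epsilon)$, hence $G(x_\epsilon)\le 0$, and as $G\ge 0$ on $\Omega$ we conclude $G(x_\epsilon)=0$, i.e. $x_\epsilon\in S_0$; for coercive $F$, boundedness of $\SGeps$ is then inherited from that of $S_0$. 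For part \eqref{t:solution sets 3}, given $x\in S_0\cap S_\epsilon$, I would take the witness $u\in\partial\varphi(x)$ with $\langle F(x)+\epsilon u,\,y-x\rangle\ge 0$ for all $y\in\Omega$; restricting to $y\in S_0$ and cancelling $\langle F(x),\,y-x\rangle=0$ yields $\langle u,\,y-x\rangle\ge 0$ on $S_0$, i.e. $x\in S_\varphi$. Part \eqref{t:solution sets 2} is then the contrapositive of \eqref{t:solution sets 3}.

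For part \eqref{t:solution sets 4}, given $x\in S_0\cap\SGeps$, the sum rule turns the optimality condition for $\min_\Omega(G+\epsilon\varphi)$ into $\langle g+\epsilon u,\,y-x\rangle\ge 0$ on $\Omega$ for some $g\in\partial G(x)$ and $u\in\partial\varphi(x)$; the subgradient inequality for $G$ at points of $S_0$ (where $G$ and $G(x)$ both vanish) gives $\langle g,\,y-x\rangle\le 0$ there, so $\langle u,\,y-x\rangle\ge 0$ on $S_0$ and $x\in S_\varphi$. For part \eqref{t:solution sets 5} I would use that a Lagrange multiplier $\lambdabar$ makes $\varphi+\lambdabar G+\iota_\Omega$ attain its infimum at every $\xbar\in S_\varphi$, so $0\in\partial\varphi(\xbar)+\lambdabar\,\partial G(\xbar)+N_\Omega(\xbar)$ there (complementarity being automatic, as $G(\xbar)=0$). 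If $\lambdabar=0$: for $\xbar\in S_\varphi$ choose $u\in\partial\varphi(\xbar)$ with $\langle u,\,y-\xbar\rangle\ge 0$ on $\Omega$ and add $\epsilon$ times this to the inequality $\langle F(\xbar),\,y-\xbar\rangle\ge 0$ valid since $\xbar\in S_0$, obtaining $\xbar\in S_\epsilon$; hence $S_\varphi\subset S_0\cap S_\epsilon$, which with \eqref{t:solution sets 3} gives equality for every $\epsilon>0$. If $\lambdabar>0$: for $\xbar\in S_\varphi$ combine the optimality inequality $\langle u+\lambdabar g,\,y-\xbar\rangle\ge 0$ on $\Omega$ (with $g\in\partial G(\xbar)$) with $\langle g_0,\,y-\xbar\rangle\ge 0$ on $\Omega$, which expresses that $\xbar$ minimizes $G$ over $\Omega$ ($g_0\in\partial G(\xbar)$); for $0<\epsilon\le 1/\lambdabar$ the nonnegative combination $\epsilon(u+\lambdabar g)+(1-\epsilon\lambdabar)g_0$ equals $g'+\epsilon u$ with $g':=\epsilon\lambdabar g+(1-\epsilon\lambdabar)g_0\in\partial G(\xbar)$ (by convexity of $\partial G(\xbar)$), so $\langle g'+\epsilon u,\,y-\xbar\rangle\ge 0$ on $\Omega$, i.e. $\xbar$ minimizes $G+\epsilon\varphi$ over $\Omega$ and $\xbar\in\SGeps$; with \eqref{t:solution sets 4} this yields $S_\varphi=S_0\cap\SGeps$ on $(0,1/\lambdabar]$. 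For part \eqref{t:solution sets 6}, at $\xbar\in S_0\cap S_{G_{\epsilonbar\varphi}}$ the sum rule gives $\langle g+\epsilonbar u,\,y-\xbar\rangle\ge 0$ on $\Omega$ with $g\in\partial G(\xbar)$, $u\in\partial\varphi(\xbar)$; dividing by $\epsilonbar$ shows $0\in\partial\varphi(\xbar)+\tfrac{1}{\epsilonbar}\partial G(\xbar)+N_\Omega(\xbar)$, so $\xbar$ minimizes $\varphi+\tfrac{1}{\epsilonbar}G+\iota_\Omega$; since $\xbar$ is feasible for \eqref{e:PphiG} and $\tfrac{1}{\epsilonbar}G(\xbar)=0$, a weak-duality argument makes $\xbar$ optimal and $1/\epsilonbar$ a Lagrange multiplier of \eqref{e:PphiG}, hence of $\gvi(\partial\varphi,S_0)$. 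Part \eqref{t:solution sets 5} with $\lambdabar=1/\epsilonbar>0$ then gives $S_\varphi=S_0\cap\SGeps$ on $(0,\epsilonbar]$, and part \eqref{t:solution sets 1} gives $\SGeps\subset S_0$ on $(0,\epsilonbar)$, so $\SGeps=S_0\cap\SGeps=S_\varphi$ there.

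I expect the main obstacle to be the subdifferential bookkeeping rather than any single inspired step: one must guarantee $\partial G$ nonempty at every point of $\Omega$ (not merely on $\rint\Omega$) and the validity of the sum rule $\partial(G+\epsilon\varphi+\iota_\Omega)=\partial G+\epsilon\,\partial\varphi+N_\Omega$ used in both directions, which is exactly why coercivity of $F$ (via Proposition~\ref{t:existence}) and the relative-interior conditions of Assumptions~\ref{hyp:GVIepsilon}--\ref{hyp:GVIvarphi} are imposed. The other delicate point is to keep straight the two faces of a convex-programming Lagrange multiplier --- it certifies the KKT relation at every optimal point (used in \eqref{t:solution sets 5}), while conversely a feasible point satisfying a KKT relation with some $\lambda\ge 0$ is optimal with $\lambda$ as multiplier (used in \eqref{t:solution sets 6}) --- since it is this pair of implications, together with the sandwich in \eqref{t:solution sets 1}, that makes parts \eqref{t:solution sets 1}, \eqref{t:solution sets 4}, \eqref{t:solution sets 5} and \eqref{t:solution sets 6} interlock. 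The convex-combination step for $\epsilon<1/\lambdabar$ in \eqref{t:solution sets 5} is the only genuinely non-routine manipulation.
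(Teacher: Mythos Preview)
Your proposal is correct and follows essentially the same structure as the paper's proof: the six parts interlock in the same way and rest on the same tools (the dual gap function, the subdifferential sum rule, and a convex-combination step). The only differences are minor: you deduce \eqref{t:solution sets 2} as the contrapositive of \eqref{t:solution sets 3} whereas the paper proves both independently, and in \eqref{t:solution sets 5} with $\lambdabar>0$ you carry out the convex combination at the subgradient level (needing an extra sum-rule application to produce $g_0\in\partial G(\xbar)$ with $-g_0\in N_\Omega(\xbar)$), whereas the paper performs the same combination at the function-value level using simply $0\le G(x)$ on $\Omega$, which avoids that extra step.
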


\begin{proof}
\eqref{t:solution sets 1}.  Let $ \xbar \in S_0 \cap S_{G_{\epsilonbar\varphi}}$. Since $ \xbar \in S_0 $ we have
$ G(\xbar) = 0 $ and thus $ \xbar $ minimizes the  convex function $ G $  over the convex set $ \Omega $.  
In fact the set of all minimizers of $ G $ over $ \Omega $ is exactly $ S_0$. 
Now choose any $ x \in  \Omega \setminus S_0$.  At such points we have  
$ G_{\bar{\varepsilon}} (\xbar) \le G_{\bar{\varepsilon}} (x) $ and $ G(\xbar) < G(x)$, where 
$G_{\bar{\varepsilon}} (x) = G(x) + \bar{\varepsilon} \varphi (x)$.  
Let $ \varepsilon \in (0, \bar{\varepsilon})$ and note that 
\begin{eqnarray*}
\frac{\varepsilon}{\bar{\varepsilon}} G_{\bar{\varepsilon}} (\xbar) = 
\frac{\varepsilon}{\bar{\varepsilon}} G_{\bar{\varepsilon}} (\xbar) + (1 - \frac{\varepsilon}{\bar{\varepsilon}})  G(\xbar),
\end{eqnarray*}
and, for any $y\in \Omega$, 
\[
\frac{\varepsilon}{\bar{\varepsilon}} G_{\bar{\varepsilon}} (y) + (1 - \frac{\varepsilon}{\bar{\varepsilon}})  G(y)  = G_\varepsilon (y) .
\]
Since $0 < \displaystyle\frac{\varepsilon}{\bar{\varepsilon}} < 1$, this yields,  for $ x \in \Omega \setminus S_0$,
\begin{eqnarray*}
G_\varepsilon (\xbar)  <  \frac{\varepsilon}{\bar{\varepsilon}}  G_{\bar{\varepsilon}} (x) + (1 - \frac{\varepsilon}{\bar{\varepsilon}}) G(x) =G_\epsilon(x) ,
\end{eqnarray*}
thus $ x \not\in \SGeps$. By contraposition we have 
$ x \in \SGeps$ for $\varepsilon \in (0, \bar{\varepsilon})$ implies $ x \in S_0 $.  This yields the first statement.  If, in addition 
$F$ is coercive, by Proposition \ref{t:existence}\eqref{t:existence i}, $S_0$ is bounded, hence $\SGeps$ is bounded for all 
$\varepsilon \in (0, \bar{\varepsilon})$.
\hfill$\triangle$\\

\eqref{t:solution sets 2}. Let $ x \in S_0\setminus S_\varphi$.  For each $ v \in \partial \varphi (x) $,
there exists $ y \in S_0 $ (depending on $ v$) such that 
\begin{eqnarray*}
\langle v, y - x \rangle < 0 .
\end{eqnarray*}
On the other hand, for the same pair $y$ and $x$, since $ y \in S_0$, we have 
\begin{eqnarray*}
\langle F(y), y - x \rangle \le 0.
\end{eqnarray*}
Since $ F $ is monotone this implies that 
\begin{eqnarray*}
\langle F(x), y - x \rangle \le 0.
\end{eqnarray*}
Then for any $ \varepsilon > 0  $ we have
\begin{eqnarray*}
\langle F(x) + \varepsilon v, y - x \rangle < 0.
\end{eqnarray*}
Hence $ x \not\in S_\varepsilon $ as claimed.
\hfill$\triangle$\\

\eqref{t:solution sets 3}. Let $ \xbar \in S_0 \cap S_\varepsilon $.  
Then  for some $ v \in \partial \varphi(\xbar) $ , we have
\begin{equation} \label{neweq1}
\langle F(\xbar) + \varepsilon v , x - \xbar \rangle \ge 0 , \quad \forall x \in \Omega 
\end{equation}
and 
\begin{eqnarray*}
\langle F(\xbar), x - \xbar \rangle \ge 0 \quad \forall x \in \Omega.
\end{eqnarray*}
On other hand for any $ x \in S_0 $ we have 
\begin{eqnarray*}
\langle F(x), \xbar - x \rangle \ge 0.
\end{eqnarray*}
Now by the monotonicity of $ F $ we have 
\begin{eqnarray*}
\langle F(\xbar), \xbar - x \rangle \ge \langle F(x), \xbar - x \rangle \ge 0.
\end{eqnarray*}
Hence $ \langle F(\xbar), x - \xbar \rangle = 0 $. Thus using (\ref{neweq1}) we conclude that
there exists $ v \in \partial \varphi (\xbar ) $ such that for all $ x \in S_0 $ we have $ \langle v, x - \xbar \rangle \ge 0$.
In other words, $ \xbar \in S_\varphi $, as claimed.\hfill$\triangle$\\

\eqref{t:solution sets 4}. Let $x_\epsilon \in S_0 \cap \SGeps  $.  For all $ x \in \Omega $,
\begin{eqnarray*}
G(x_\epsilon) + \varepsilon \varphi (x_\epsilon)  \le G(x) + \varepsilon  \varphi (x).
\end{eqnarray*}
Since $ \varepsilon > 0 $ we have, for all $ x \in \Omega$,
\begin{eqnarray*}
\frac{1}{\varepsilon} G(x_\epsilon) + \varphi(x_\epsilon) \le \frac{1}{\varepsilon} G(x) + \varphi (x).
\end{eqnarray*}
This shows that $ x_\epsilon $ solves 
\begin{eqnarray}\label{e:problah}
 \underset{\substack{x \in \Omega}}{\mbox{minimize }} \varphi(x) + \frac{1}{\varepsilon} G(x).
\end{eqnarray}
\noindent By Proposition \ref{t:existence}\eqref{t:existence ii}, $ G $ is finite-valued on $\Omega$ since $F$ is coercive
on an open set that contains $\Omega$, hence, in particular, $\DRLdom G\supset\Omega$.  
The first-order optimality conditions for \eqref{e:problah} are
\begin{eqnarray*}
0 \in \partial ( \varphi + \frac{1}{\varepsilon} G) (x_\epsilon) + N_\Omega(x_\epsilon).
\end{eqnarray*}
By Assumption \ref{hyp:GVIvarphi}, $0\in\rint\left(\DRLdom\partial \varphi - \Omega\right)\subset 
\rint\left(\DRLdom\partial \varphi - \DRLdom G\right)$ so we may apply the sum rule for subdifferentials
(see, for example, \cite[Theorem 3.39]{Penot}) for the equivalent inclusion
\begin{eqnarray*}
0 \in \partial \varphi (x_\epsilon) + \frac{1}{\varepsilon} \partial G(x_\epsilon)  + N_\Omega(x_\epsilon).
\end{eqnarray*}
As $\frac{1}{\varepsilon}\in N_{\Rbb_-}(G(x_\epsilon))$, the above inclusion is just \eqref{e:Lagrange optimality}, hence 
$\displaystyle\frac{1}{\varepsilon}$ is a Lagrange multiplier of \eqref{e:PphiG} paired with the solution
$x_\epsilon$.  Since the solution set to \eqref{e:PphiG} coincides with $S_\varphi$,  
this completes the proof of part \eqref{t:solution sets 4}.\hfill$\triangle$\\

\eqref{t:solution sets 5}.  Suppose that $\xbar \in S_\varphi $ is a solution, paired with the 
Lagrange multiplier $ \lambdabar \ge 0$, to $\gvi( \partial\varphi, S_0) $.  That is, by 
\eqref{e:Lagrange optimality} the pair
$(\xbar,\lambdabar)$ satisfies 
\begin{eqnarray}\label{e:foc GVIphi}
0 \in \partial\varphi(\xbar) + \lambdabar \partial G(\xbar)  + N_\Omega(\xbar) .
\end{eqnarray}

We consider first the case $ \lambdabar = 0 $.   The optimality condition \eqref{e:foc GVIphi}
then simplifies to  
\begin{eqnarray*}
0 \in \partial\varphi(\xbar) + N_\Omega(\xbar),
\end{eqnarray*}
\noindent hence there exists $ v \in \partial\varphi(\xbar) $ such that 
\begin{equation}\label{neweq2}
\langle v, x - \xbar \rangle \ge 0 , \quad \forall x \in \Omega .
\end{equation}
Moreover, since $ \xbar \in S_\varphi$, we know that $ \xbar \in S_0 $ and hence
\begin{equation}\label{neweq3}
\langle F(\xbar),  x - \xbar \rangle\geq 0 \quad \forall x \in \Omega.
\end{equation}
Thus multiplying \eqref{neweq2} by $ \varepsilon > 0 $ and adding to
\eqref{neweq3} yields
\begin{eqnarray*}
\langle F(\xbar) + \varepsilon v, x - \xbar \rangle \ge 0 \quad \forall  x \in \Omega,
\end{eqnarray*}
\noindent that is,  $ \xbar \in S_\varepsilon $ and hence  
$ S_\varphi \subseteq S_\varepsilon \cap S_0$. Now by Part \eqref{t:solution sets 3} we conclude that,
for $ \lambdabar = 0$,  we have $ S_\varphi = S_\varepsilon \cap S_0 $.

Consider next the case $ \lambdabar > 0$.  
Note that $\DRLdom  \varphi \cap \DRLdom  G \neq \emptyset $ since $ S_0 \subset\DRLdom  \varphi \cap \DRLdom  G$. 
Further $ \varphi $ is continuous on $ \intr \DRLdom \varphi $ and thus continuous on $ S_0 $.
By Proposition \ref{t:existence} and Assumption \ref{hyp:GVIvarphi} we can again apply the sum rule to yield
\begin{eqnarray*}
0 \in \partial\varphi(\xbar) + \lambdabar \partial G(\xbar)  + N_\Omega(\xbar) = \partial ( \varphi + \lambdabar G)(\xbar) + N_\Omega( \xbar).
\end{eqnarray*}
We conclude that $ \xbar $ is a minimizer of the convex optimization problem
\begin{eqnarray*}
\min_{x \in \Omega} \varphi (x) + \lambdabar G(x)
\end{eqnarray*}
and, hence,
\begin{eqnarray*}
\frac{1}{\lambdabar}  \varphi(\xbar) + G(\xbar) \le \frac{1}{\lambdabar} \varphi(x) + G(x), \quad \forall x \in \Omega.
\end{eqnarray*}
Now since, $ \xbar \in S_0 $, we have, in fact, $ G(\xbar) = 0$, so the above inequality simplifies to 
\begin{equation}\label{neweq5}
\frac{1}{\lambdabar}  \varphi(\xbar)  \le \frac{1}{\lambdabar} \varphi(x) + G(x), \quad \forall x \in \Omega.
\end{equation}
Also note that for any $ x \in \Omega $
\begin{equation} \label{neweq6}
0 \le G(x).
\end{equation}
\noindent Multiplying  (\ref{neweq5} ) by $ \eta $ and (\ref{neweq6}) by $( 1 - \eta) $ with $ \eta \in (0,1] $
and adding yields
\begin{eqnarray*}
\frac{\eta}{\lambdabar}  \varphi(\xbar) \le \frac{\eta}{\lambdabar} \varphi(x) + G(x), \quad \forall x \in \Omega.
\end{eqnarray*}
Again using the fact that $ G(\xbar) = 0$, the above inequality can be written as
\begin{equation}\label{neweq7}
\frac{\eta}{\lambdabar}  \varphi(\xbar)  + G(\xbar)  \le \frac{\eta}{\lambdabar} \varphi(x) + G(x), \quad \forall x \in \Omega.
\end{equation}
For all $ \varepsilon\in (0,\frac{1}{\lambdabar}]$ there is an $\eta \in (0, 1] $ with 
$ \varepsilon  = \displaystyle \frac{\eta}{\lambdabar} $.  Then, by \eqref{neweq7}, 
for all $ \varepsilon \in (0,  \frac{1}{\lambdabar}] $,
\begin{eqnarray*}
\varepsilon  \varphi(\xbar)  + G(\xbar)  \le \varepsilon \varphi(x) + G(x)  \quad \forall ~x \in \Omega. 
\end{eqnarray*}
Hence, for all $ \varepsilon \in (0,  \frac{1}{\lambdabar}] $, $ \xbar \in \SGeps$, and thus $ \xbar \in S_0 \cap \SGeps$. 
This establishes the inclusion $S_\varphi \subseteq S_0 \cap \SGeps$.  Now by part \eqref{t:solution sets 4} 
this implies that $ S_\varphi = S_0 \cap \SGeps$, as claimed.
\hfill$\triangle$\\

\eqref{t:solution sets 6}.  Suppose that  there exists $\bar{\varepsilon}  > 0 $ such that 
$ S_0 \cap S_{G_\varepsilonbar\varphi} \neq \emptyset $.
Choose $ \xbar \in S_0 \cap S_{G_\varepsilonbar\varphi}$.  Since $ \xbar \in  S_{G_\varepsilonbar\varphi}$ we have 
\begin{eqnarray*}
G(\xbar) + \bar{\varepsilon} \varphi (\xbar) \le G(x) + \bar{\varepsilon}  \varphi(x) \quad \forall x \in \Omega,
\end{eqnarray*}
and hence
\begin{eqnarray*}
\frac{1}{ \bar{\varepsilon}} G(\xbar) + \varphi(\xbar) \le \frac{1}{\bar{\varepsilon}} G(x) + \varphi(x) \quad \forall x \in \Omega.
\end{eqnarray*}
Thus $ \xbar $ solves the convex optimization problem.
\begin{eqnarray*}
\min_{x \in \Omega}  \varphi (x) + \frac{1}{\bar{\varepsilon}} G(x).
\end{eqnarray*}
Since $F$ is coercive we may apply By Proposition \ref{t:existence} to conclude that $ G $ is a 
finite convex function and $\xbar$ satisfies
\begin{eqnarray*}
0 \in \partial(\varphi + \frac{1}{\bar{\varepsilon}} G) (\xbar) + N_\Omega(\xbar).
\end{eqnarray*}
Thus using the sum rule we obtain that 
\begin{eqnarray*}
0 \in \partial\varphi (\xbar) + \frac{1}{\bar{\varepsilon}} \partial G(\xbar) + N_\Omega(\xbar).
\end{eqnarray*}
This shows that $ \frac{1}{\bar{\varepsilon}} > 0 $ is a Lagrange multiplier of the problem \eqref{e:PphiG}. 
Thus  using \eqref{t:solution sets 5} we conclude that
$ S_0 \cap \SGeps  = S_\varphi $ for all $ \varepsilon \in (0, \bar{\varepsilon}] $.  By part \eqref{t:solution sets 1},
$\SGeps\subset S_0$ for all $\epsilon\in (0,\epsilonbar)$, which yields the second statement and completes the proof. 
\hfill$\triangle$\\
\end{proof}

\begin{cor}[boundedness of solutions to \eqref{e:PGe}]\label{t:boundedness for VIeps}
Let Assumption \ref{hyp:F} hold and let
$F$ be coercive with respect to $\Omega$.  Assume further that there exists $\epsilonbar>0$ 
such that $S_0\cap S_{G_{\epsilonbar\varphi}}\neq \emptyset$.
Denote by $\Ucal_{\epsilon'}$ the set
\[
\Ucal_{\epsilon'}\equiv   \bigcup_{0< \epsilon\leq \epsilon'} S_{G_{\epsilon\varphi}}.
\]
For all  $\epsilon'<\epsilonbar$, the set  $\Ucal_{\epsilon'}$ is nonempty and bounded.
 \end{cor}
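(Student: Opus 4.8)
The plan is to combine three ingredients: compactness of $S_0$, the inclusion $\SGeps\subset S_0$ furnished by Theorem~\ref{t:solution sets}\eqref{t:solution sets 1}, and a separate argument that each such $\SGeps$ is in fact nonempty. I would begin by recording that $S_0$ is compact. Coercivity of $F$ together with Proposition~\ref{t:existence}\eqref{t:existence i} makes $S_0$ nonempty and bounded; and since $G$ is the closed convex dual gap function satisfying $G\ge 0$ on $\Omega$ and $S_0=\{x\in\Omega\mid G(x)=0\}=\{x\in\Omega\mid G(x)\le 0\}$, the set $S_0$ is the intersection of a closed sublevel set of $G$ with the closed set $\Omega$, hence closed. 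Proposition~\ref{t:existence}\eqref{t:existence ii} also gives that $G$, and therefore $G+\varepsilon\varphi$, is finite valued on $\Omega$.

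Boundedness of $\Ucal_{\epsilon'}$ is then immediate: for every $\varepsilon\in(0,\epsilon']$ we have $\varepsilon<\epsilonbar$, so Theorem~\ref{t:solution sets}\eqref{t:solution sets 1} yields $\SGeps\subset S_0$; taking the union over $\varepsilon\in(0,\epsilon']$ gives $\Ucal_{\epsilon'}\subset S_0$, which is bounded.

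For nonemptiness it suffices to show $\SGeps\neq\emptyset$ for some $\varepsilon\in(0,\epsilon']$, and I would in fact prove it for every $\varepsilon\in(0,\epsilonbar)$. Fix such an $\varepsilon$, set $t=\varepsilon/\epsilonbar\in(0,1)$, and pick $\xbar\in S_0\cap S_{G_{\epsilonbar\varphi}}$, which exists by hypothesis. Using $G(\xbar)=0$ and the identity $G(y)+\varepsilon\varphi(y)=t\bigl(G(y)+\epsilonbar\varphi(y)\bigr)+(1-t)G(y)$ for $y\in\Omega$, exactly as in the proof of Theorem~\ref{t:solution sets}\eqref{t:solution sets 1}, the facts that $\xbar$ minimizes $G+\epsilonbar\varphi$ over $\Omega$ and that $0=G(\xbar)<G(x)$ for $x\in\Omega\setminus S_0$ yield the strict inequality $G(\xbar)+\varepsilon\varphi(\xbar)<G(x)+\varepsilon\varphi(x)$ for every $x\in\Omega\setminus S_0$. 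Consequently $\inf_{x\in\Omega}\{G(x)+\varepsilon\varphi(x)\}=\inf_{x\in S_0}\{G(x)+\varepsilon\varphi(x)\}$. Since $G$ vanishes on $S_0$ and $\varphi$ is continuous on $\Omega$, the restriction of $G+\varepsilon\varphi$ to the compact set $S_0$ is continuous and hence attains its minimum at some $x^\ast\in S_0\subset\Omega$; then $x^\ast$ minimizes $G+\varepsilon\varphi$ over all of $\Omega$, i.e.\ $x^\ast\in\SGeps$. Thus $\SGeps\neq\emptyset$, and in particular $\Ucal_{\epsilon'}\neq\emptyset$.

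I expect the nonemptiness step to be the one requiring care: minimizing $G+\varepsilon\varphi$ over the possibly unbounded set $\Omega$ is not automatic, and the role of the strict inequality imported from the proof of Theorem~\ref{t:solution sets}\eqref{t:solution sets 1} is precisely to confine that minimization to the compact set $S_0$, after which continuity of $\varphi$ on $S_0$ closes the argument.
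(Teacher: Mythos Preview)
Your argument is correct and, for the boundedness claim, follows exactly the paper's route: invoke Theorem~\ref{t:solution sets}\eqref{t:solution sets 1} to get $\SGeps\subset S_0$ for every $\varepsilon\in(0,\epsilonbar)$, then use coercivity of $F$ (Proposition~\ref{t:existence}\eqref{t:existence i}) to conclude that $S_0$, and hence $\Ucal_{\epsilon'}$, is bounded. The paper's own proof is a single line pointing to Theorem~\ref{t:solution sets}\eqref{t:solution sets 1} and says nothing further.

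Where you go beyond the paper is in the nonemptiness step. The paper's one-line proof does not address why $\SGeps\neq\emptyset$; the inclusion $\SGeps\subset S_0$ alone certainly does not give it. Your argument---reducing the minimization of $G+\varepsilon\varphi$ over $\Omega$ to a minimization over the compact set $S_0$ via the strict inequality from the proof of Theorem~\ref{t:solution sets}\eqref{t:solution sets 1}, and then invoking continuity of $\varphi$ on $S_0$---is a clean way to close this gap. Note that you are implicitly using Assumption~\ref{hyp:GVIepsilon}\eqref{hyp:varphi} (continuity of $\varphi$), which is a standing hypothesis on the regularizer throughout the paper even though the corollary statement does not list it explicitly; it would be worth flagging that dependence.
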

\begin{proof}  This is a direct consequence of Theorem \ref{t:solution sets}  \eqref{t:solution sets 1}.
\end{proof}

\section{Convergence of regularized VI: regularizing the dual gap function $G$ with $\varphi$}%
\label{s:reg G}
In this section we briefly discuss the solution strategies for the regularization approach given by 
\eqref{e:PGe}; that is, we regularize the dual gap function $G$ of $\vi(F,\Omega)$ 
by $\epsilon_k\varphi$ and examine solutions $x_{\epsilon_k}$ to \eqref{e:PGe} 
with parameter $\epsilon_k$. Abstractly, this simply concerns regularization of convex 
optimization problems, and therefore is well understood.   Our primary interest here is 
what relation the sequence of solutions to the regularized optimization problems has to 
the solution set to the unregularized monotone variational inequality. 
If the condition for the exact regularization 
(Theorem \ref{t:solution sets}\eqref{t:solution sets 1}) holds, then the regularized solutions, $x_{\epsilon_k}$,
lie in the solution set $S_0$ for all $k$ such that $\epsilon_k<\epsilonbar$.  Moreover, if $F$ is coercive, 
then by Corollary \ref{t:boundedness for VIeps} the sequence $(x_{\epsilon_k})$ has cluster points, all of which 
are solutions to $\vi(F, \Omega)$.  Therefore, for some $k$ large enough, in order to solve 
$\vi(F, \Omega)$ for $F$ monotone, it suffices to solve \eqref{e:PGe} for $\epsilon_k$.    
\begin{propn}\label{p:exactsolutionsgap}
 Suppose $\Omega$, $F$ satisfy Assumptions \ref{hyp:F}, and 
 let $F$ be coercive with respect to  $\Omega$.
Let $(\epsilon_k)_{k\in\Nbb}$ be a decreasing sequence on $\DRLRp$ with $\epsilon_k \searrow 0$ and let $x_{\epsilon_k}$ solve 
\eqref{e:PGe} with parameter $\epsilon_k$ for each $k\in \Nbb$.  If there exists $\epsilon > 0$ such that $S_0 \cap \SGeps \neq \emptyset$, 
then the sequence $(x_{\epsilon_k})_{k\in\Nbb}$ 
is bounded and, for all $k$ large enough,  $x_{\epsilon_k}\in S_0$.
\end{propn}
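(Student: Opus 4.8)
The plan is to chain together two results already proven in the excerpt. First I would observe that the hypothesis ``there exists $\epsilon > 0$ with $S_0 \cap \SGeps \neq \emptyset$'' is precisely the hypothesis of Theorem \ref{t:solution sets}\eqref{t:solution sets 1} (with $\epsilonbar$ set to this $\epsilon$). That theorem immediately gives $\SGeps[\epsilon'] \subset S_0$ for all $\epsilon' \in (0, \epsilonbar)$ and, since $F$ is coercive with respect to $\Omega$, also gives that $\SGeps[\epsilon']$ is bounded for all such $\epsilon'$. Better still, Corollary \ref{t:boundedness for VIeps} tells us that the \emph{union} $\Ucal_{\epsilon'} = \bigcup_{0 < \epsilon \le \epsilon'} \SGeps[\epsilon]$ is nonempty and bounded for every $\epsilon' < \epsilonbar$.

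The remaining work is bookkeeping with the sequence. Since $(\epsilon_k)_{k\in\Nbb}$ is decreasing with $\epsilon_k \searrow 0$, there is an index $k_0$ such that $\epsilon_k < \epsilonbar$ for all $k \ge k_0$; fix any $\epsilon' \in (0, \epsilonbar)$ with $\epsilon_{k_0} \le \epsilon'$ (for instance $\epsilon' = \epsilon_{k_0}$ if $\epsilon_{k_0} < \epsilonbar$, otherwise shrink $k_0$). Then for every $k \ge k_0$ the solution $x_{\epsilon_k}$ lies in $\SGeps[\epsilon_k] \subset \Ucal_{\epsilon'}$, hence $x_{\epsilon_k} \in S_0$ for all $k \ge k_0$ by Theorem \ref{t:solution sets}\eqref{t:solution sets 1}, which is the second assertion. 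For the boundedness assertion, the tail $(x_{\epsilon_k})_{k \ge k_0}$ lies in the bounded set $\Ucal_{\epsilon'}$, and a finite initial segment $x_{\epsilon_1}, \dots, x_{\epsilon_{k_0-1}}$ is trivially bounded, so the whole sequence $(x_{\epsilon_k})_{k\in\Nbb}$ is bounded.

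There is essentially no obstacle here: the proposition is a packaging of Theorem \ref{t:solution sets}\eqref{t:solution sets 1} and Corollary \ref{t:boundedness for VIeps} for a specific decreasing parameter sequence. The only point requiring a word of care is that the hypothesis supplies some $\epsilon$ with $S_0 \cap \SGeps \neq \emptyset$, which we must relabel as $\epsilonbar$ to invoke the earlier results, and that the solutions $x_{\epsilon_k}$ are assumed to exist by hypothesis (so we need not separately argue existence; coercivity of $F$ only enters through the boundedness conclusions). Thus the proof is two or three lines once the earlier machinery is cited.

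\begin{proof}
By hypothesis there exists $\epsilonbar > 0$ with $S_0 \cap \SGeps[\epsilonbar] \neq \emptyset$, so Assumption \ref{hyp:F} together with the coercivity of $F$ places us in the setting of Theorem \ref{t:solution sets}\eqref{t:solution sets 1} and Corollary \ref{t:boundedness for VIeps}. Since $\epsilon_k \searrow 0$, choose $k_0 \in \Nbb$ with $\epsilon_k < \epsilonbar$ for all $k \ge k_0$, and set $\epsilon' = \epsilon_{k_0} < \epsilonbar$. For each $k \ge k_0$ we have $0 < \epsilon_k \le \epsilon'$, hence $x_{\epsilon_k} \in \SGeps[\epsilon_k] \subset \Ucal_{\epsilon'}$. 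By Theorem \ref{t:solution sets}\eqref{t:solution sets 1}, $\SGeps[\epsilon_k] \subset S_0$, so $x_{\epsilon_k} \in S_0$ for all $k \ge k_0$. Moreover, by Corollary \ref{t:boundedness for VIeps} the set $\Ucal_{\epsilon'}$ is bounded, so the tail $(x_{\epsilon_k})_{k \ge k_0}$ is bounded; since the finitely many remaining terms $x_{\epsilon_1}, \dots, x_{\epsilon_{k_0 - 1}}$ form a bounded set, the whole sequence $(x_{\epsilon_k})_{k\in\Nbb}$ is bounded.
\end{proof}
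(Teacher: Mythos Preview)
Your proposal is correct and follows essentially the same route as the paper: invoke Theorem \ref{t:solution sets}\eqref{t:solution sets 1} to obtain $S_{G_{\epsilon_k\varphi}}\subset S_0$ for all $\epsilon_k\in(0,\epsilonbar)$, and use Corollary \ref{t:boundedness for VIeps} (equivalently, boundedness of $S_0$ via coercivity) for the boundedness of the tail. Your version is in fact slightly more careful than the paper's, since you explicitly dispose of the finitely many initial terms $x_{\epsilon_1},\dots,x_{\epsilon_{k_0-1}}$, a point the paper leaves implicit.
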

\proof
Boundedness of the sequence $(x_{\epsilon_k})_{k\in\Nbb}$ follows from Corollary \ref{t:boundedness for VIeps}.  Indeed, since 
$S_0$ is bounded (Lemma \ref{t:S_0 compact}) with $S_{G_{\epsilon_k\varphi}}\subset S_0$ for all 
$\epsilon_k\in(0, \epsilonbar)$ (Theorem \ref{t:solution sets}\eqref{t:solution sets 1}), then the result follows immediately.  
\endproof

Motivated by the study of error bounds in \cite{FriedlanderTseng07}, we now derive an error bound
for $d(S_0,\SGeps)$ in a analogous framework to \cite[Theorem 5.1]{FriedlanderTseng07}.  
For this we introduce
 the concept of {\em weak sharpness of order $\gamma>1$} for the solution sets of variational inequalities.

 The notion of weak sharp minimum for a convex minimization problem  has been introduced by Burke and 
Ferris \cite{BurkeFerris93}.  We recall that the solution set $S_f\equiv \argmin_{x\in
\Omega}\{f(x)\}$ is weakly sharp if there exists a positive number $\alpha$ (sharpness constant) such that 
\begin{equation*}
 f(x)\geq f(\xbar)+\alpha~d(x, S_f)~~~\forall \xbar\in S_f.
\end{equation*}
Similarly, the solution set $S_f$ is weakly sharp 
of order $\gamma$ if there exists a positive number $\alpha$ (sharpness constant) such that, 
for each $x\in\Omega$, 
\begin{equation*}
 f(x)\geq f(\xbar)+\alpha~d(x, S_f)^\gamma~~~\forall \xbar\in S_f.
\end{equation*}

For any $A\subset \mathbb{R}^n$, it's polar cone is defined as 
$A^\circ=\{y \in \mathbb{R}^n:\langle y, x \rangle \leq 0\quad  \forall x\in A\}$. The
relationship between a cone and its polar cone is, similar to that between a linear subspace and its orthogonal
complement. From the characterization of a weak sharp solution for a convex minimization problem with a closed proper
objective function $f$,
Marcotte and Zhu \cite{MarcotteZhu98} extended the concept
of weak sharp minima for the variational inequality problem. The solution set $S_0$ of 
$\vi(F,\Omega)$ is weakly sharp if, for any $\xbar\in S_0$,
\begin{equation}\label{weaksharpVI}
 -F(\xbar)\in \intr\left(\bigcap_{x\in S_0}[T_\Omega(x)\cap N_{S_0}(x)]^\circ\right).
\end{equation}
 However, it is not obvious how to extend \eqref{weaksharpVI} for orders $\gamma>1$. Since the dual gap 
 function $G(x)$ casts $\vi(F, \Omega)$ as a convex minimization problem, an alternative notion of weak
 sharp minima of a variational inequality of order 1  based on $G(x)$ has been proposed in \cite{MarcotteZhu98}. 
We extend this to orders $\gamma>1$ and propose a generalization. That is, the set $S_0$ is weakly sharp of 
order $\gamma>1$ if there exists a positive number
$\alpha$ (the sharpness constant) such that 
\begin{equation}\label{weaksharpVIG-gamma}
G(x)\geq \alpha~d(x,S_0)^\gamma~~~\forall x\in \Omega.
\end{equation}
 \begin{thm}\label{t:theorem-errorbound-ereg}
Let $F$, $\Omega$ and $\varphi$ together satisfy Assumptions \ref{hyp:F}, \ref{hyp:GVIepsilon} and 
\ref{hyp:GVIvarphi}, and let $F$ be coercive with respect to $\Omega$. 
Suppose that the solution set $S_0$ is weakly sharp of order $\gamma> 1$ with sharpness constant
$\alpha>0$.
Then there exists $\tau>0$ such that, for all $\epsilon>0$, 
  \begin{equation}
   d(x_\epsilon, S_0)^{\gamma-1}\leq \tau\epsilon~~~\forall~~x_\epsilon
\in \SGeps.
  \end{equation}
In particular, $\SGeps$ is bounded for each $\epsilon>0$.   
 \end{thm}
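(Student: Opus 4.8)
The plan is to bound an arbitrary $x_\epsilon\in\SGeps$ against a single, $\epsilon$-independent reference point: a minimizer of $\varphi$ over $S_0$ together with a well-placed subgradient there. First I would collect the standing facts. Since $F$ is continuous, monotone and coercive with respect to $\Omega$, Proposition~\ref{t:existence} gives that $S_0$ is nonempty and bounded, and it is closed (being the solution set of a continuous variational inequality over a closed set), hence compact; moreover $G$ is finite-valued on all of $\Omega$, with $G\ge 0$ on $\Omega$ and $S_0=\{x\in\Omega:G(x)=0\}$. Because $S_0$ is compact and $\varphi$ is continuous on $\Omega\supseteq S_0$, the set $\argmin_{S_0}\varphi$ is nonempty; fix a point $\xbar$ in it. Applying the subdifferential sum rule to $0\in\partial(\varphi+\iota_{S_0})(\xbar)$ — legitimate here precisely because Assumption~\ref{hyp:GVIvarphi} supplies the constraint qualification $0\in\rint(\DRLdom\partial\varphi-S_0)$, exactly as in the proof of Corollary~\ref{t:Sphi compact} — I obtain a vector $v_0\in\partial\varphi(\xbar)$ with $-v_0\in N_{S_0}(\xbar)$, i.e.\ $\langle v_0,\,s-\xbar\rangle\ge 0$ for all $s\in S_0$. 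Equivalently, $\xbar\in S_\varphi=\sol(\partial\varphi,S_0)$. The point of comparing against $\xbar$, rather than against the metric projection $P_{S_0}(x_\epsilon)$, is that $\|v_0\|$ is then a genuine ($\epsilon$-independent) constant.

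Next, fix $\epsilon>0$ and $x_\epsilon\in\SGeps$, and set $p_\epsilon=P_{S_0}(x_\epsilon)$, so $\|x_\epsilon-p_\epsilon\|=d(x_\epsilon,S_0)$. Since $\xbar\in S_0\subseteq\Omega$ and $G(\xbar)=0$, the minimizing property of $x_\epsilon$ for $G+\epsilon\varphi$ on $\Omega$ gives
\[
G(x_\epsilon)+\epsilon\varphi(x_\epsilon)\ \le\ G(\xbar)+\epsilon\varphi(\xbar)\ =\ \epsilon\,\varphi(\xbar),
\]
so $G(x_\epsilon)\le\epsilon\bigl(\varphi(\xbar)-\varphi(x_\epsilon)\bigr)$. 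The subgradient inequality at $\xbar$, namely $\varphi(x_\epsilon)\ge\varphi(\xbar)+\langle v_0,\,x_\epsilon-\xbar\rangle$, then yields
\[
\varphi(\xbar)-\varphi(x_\epsilon)\ \le\ \langle v_0,\,\xbar-p_\epsilon\rangle+\langle v_0,\,p_\epsilon-x_\epsilon\rangle\ \le\ \|v_0\|\,d(x_\epsilon,S_0),
\]
the first inner product being $\le 0$ because $p_\epsilon\in S_0$ and $-v_0\in N_{S_0}(\xbar)$, and the second bounded by Cauchy--Schwarz.

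Finally I combine these with the weak sharpness of order $\gamma$, $\alpha\,d(x,S_0)^\gamma\le G(x)$ for $x\in\Omega$, applied at $x=x_\epsilon$:
\[
\alpha\,d(x_\epsilon,S_0)^\gamma\ \le\ G(x_\epsilon)\ \le\ \epsilon\,\|v_0\|\,d(x_\epsilon,S_0).
\]
If $d(x_\epsilon,S_0)>0$, dividing by it gives $d(x_\epsilon,S_0)^{\gamma-1}\le(\|v_0\|/\alpha)\,\epsilon$, and if $d(x_\epsilon,S_0)=0$ the inequality is trivial; hence $\tau=\|v_0\|/\alpha$ works, uniformly over $\epsilon>0$ and over $x_\epsilon\in\SGeps$. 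The displayed estimate then forces $\SGeps\subseteq\{x:d(x,S_0)\le(\tau\epsilon)^{1/(\gamma-1)}\}$, which is bounded since $S_0$ is bounded, giving the final assertion. The one genuinely delicate step is the extraction of the fixed subgradient $v_0$ with the normal-cone property in the first paragraph — this is where Assumption~\ref{hyp:GVIvarphi} (and, implicitly, the identification $S_\varphi=\sol(\partial\varphi,S_0)$) is essential; the remaining estimates are elementary.
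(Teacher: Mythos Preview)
Your proof is correct, but it takes a somewhat different route from the paper's. The paper compares $x_\epsilon$ directly to its projection $\xbar_\epsilon=P_{S_0}(x_\epsilon)$: from $G(\xbar_\epsilon)+\epsilon\varphi(\xbar_\epsilon)\ge G(x_\epsilon)+\epsilon\varphi(x_\epsilon)$ and $G(\xbar_\epsilon)=0$ one gets $\alpha\,d(x_\epsilon,S_0)^\gamma\le\epsilon(\varphi(\xbar_\epsilon)-\varphi(x_\epsilon))$, and then the subgradient inequality at $\xbar_\epsilon$ gives $\varphi(\xbar_\epsilon)-\varphi(x_\epsilon)\le\|v_\epsilon\|\,\|x_\epsilon-\xbar_\epsilon\|$ for any $v_\epsilon\in\partial\varphi(\xbar_\epsilon)$. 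The $\epsilon$-dependence of $v_\epsilon$ is then removed by observing that $\partial\varphi$ is uniformly bounded on the compact set $S_0$ (convexity and continuity of $\varphi$ on $\Omega$, plus boundedness of $S_0$ from coercivity), yielding $\tau=\alpha^{-1}M$ with $M=\sup\{\|v\|:v\in\partial\varphi(S_0)\}$.

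Your argument instead fixes a single reference point $\xbar\in\argmin_{S_0}\varphi$ together with a subgradient $v_0\in\partial\varphi(\xbar)$ satisfying $-v_0\in N_{S_0}(\xbar)$, obtained via the sum rule under Assumption~\ref{hyp:GVIvarphi}\eqref{hyp:rint_varphi}. This extra structure lets you split $\langle v_0,\xbar-x_\epsilon\rangle$ through $p_\epsilon=P_{S_0}(x_\epsilon)$ and discard the $\langle v_0,\xbar-p_\epsilon\rangle$ piece by the normal-cone property, recovering the factor $d(x_\epsilon,S_0)$. What you gain is a sharper explicit constant $\tau=\|v_0\|/\alpha\le M/\alpha$; what you pay is the sum-rule step and genuine use of the constraint qualification in Assumption~\ref{hyp:GVIvarphi}\eqref{hyp:rint_varphi}, which the paper's argument does not actually need (it only uses nonemptiness and closedness of $S_0$ together with the uniform subdifferential bound). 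Both approaches are short and valid; the paper's is marginally more elementary, yours slightly more informative about the constant.
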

\begin{proof}
 Let $x_\epsilon\in  \SGeps$ for some $\epsilon>0$ and let $\xbar_\epsilon=P_{S_0}(x_\epsilon)$, the projection 
being nonempty by Assumption \ref{hyp:GVIvarphi}. Then, from the definition
 of weak sharp minima
 \[
G(\xbar_\epsilon)+\epsilon\varphi(\xbar_\epsilon)\geq G(x_\epsilon)+\epsilon\varphi(x_\epsilon)\geq \alpha d(x_\epsilon,
S_0)^\gamma+\epsilon\varphi(x_\epsilon).
\]
Note that, $G(\xbar_\epsilon)=0$, hence 
\begin{equation}\label{e:subdifferentialphi}
 \alpha d(x_\epsilon,S_0)^\gamma= 
\alpha \|x_\epsilon-\xbar_\epsilon\|^\gamma\leq\epsilon(\varphi(\xbar_\epsilon)-\varphi(x_\epsilon)).
\end{equation}
From the definition of the subdifferential of a convex, real-valued map $\varphi$, we have
\[
\varphi(\xbar_\epsilon)-\varphi(x_\epsilon)\leq \langle v_\epsilon, \xbar_\epsilon-x_\epsilon \rangle\leq 
\|v_\epsilon\|\|x_\epsilon-\xbar_\epsilon\|~~\mbox{for all}~~v_\epsilon\in\partial\varphi(\xbar_\epsilon),
\]
thus, it follows from \eqref{e:subdifferentialphi} that
\begin{equation}
 \alpha \|x_\epsilon-\xbar_\epsilon\|^{\gamma-1}\leq\epsilon\|v_\epsilon\|.
\end{equation}
Now, for $F$ and $\Omega$ satisfying Assumption \ref{hyp:F} with 
$F$ coercive on $\Omega$, the solution set $S_0$ is bounded (Proposition \ref{t:existence}\eqref{t:existence i}). 
Moreover, by Assumption \ref{hyp:GVIepsilon},
$\varphi$ is convex and continuous on $\Omega$, and hence convex and continuous on $S_0$.   Consequently,  
$\partial\varphi$, that is $\|v_\epsilon\|$, is bounded, uniformly, on the compact set $S_0$.   
% \cite[Theorem 24.7]{CA},
Hence the statement follows  with $\tau'=\alpha^{-1}M$, where $M$ is the uniform bound for
$\|v\|$ with $v\in\partial\varphi(S_0)$.
\end{proof}

\noindent Note that this error bound is independent of the existence of Lagrange multipliers or
the coincidence of the solution sets $S_0$ and $\SGeps$ for some $\epsilon$ 
(Theorem \ref{t:solution sets}\eqref{t:solution sets 6}).

\section{Convergence of regularized VI: regularizing $F$ with $\nabla\varphi$}%
\label{s:reg VI}
In this section we study the other case of the regularization, where we
solve $\vi(F,\Omega)$ through a sequence of regularized problems $\vi(T_\epsilon, \Omega)$, 
where $T_\epsilon=F+\epsilon \nabla\varphi$. We restrict ourselves to a differentiable regularization to make the computation easier.
 We are interested in the approximate solutions to $\vi(T_\epsilon, \Omega)$ in view of the algorithm that we present later 
in this section which, in principle, 
 involves generating a sequence of solutions to the regularized problems $VI(T_{\epsilon}, \Omega)$ as $\epsilon\rightarrow 0$.
 Since, it is not possible to compute the exact solution to $VI(T_{\epsilon}, \Omega)$ in practice,
 we seek an approximate solution to $VI(T_{\epsilon}, \Omega)$ for every $\epsilon>0$ with some error tolerance.  Knowing 
that we are within a given error tolerance is the chief concern of {\em error bounds}, which we determine in 
Proposition \ref{t:errorbound}.  Error bounds between points in $S_\epsilon$ and $S_0$ are discussed briefly in section
\ref{s:error bounds S_eps}.

\subsection{Convergence of regularized solutions}
We begin with a 
study of the behavior of the path $\{x_\epsilon:\epsilon>0\}$ where $x_\epsilon$ is the unique solution to $VI(T_{\epsilon}, \Omega)$ and proceed to
 show that all the cluster points of the sequences of 
solutions  (exact or approximate)  to $\vi(T_\epsilon, \Omega)$
 are the solutions to $\vi(F, \Omega)$ as $\epsilon\rightarrow 0$. 
\begin{thm}
 When $\varphi$ is strongly convex and Fr\'echet differentiable and $F$ is coercive, 
then the map $\epsilon \mapsto x_\epsilon$ is continuous.
\end{thm}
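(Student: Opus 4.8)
The plan is to first check that $\epsilon\mapsto x_\epsilon$ is a well-defined single-valued map on $(0,\infty)$, and then to read off a local Lipschitz estimate directly from the two defining variational inequalities. Since $\varphi$ is strongly convex, say with modulus $\mu>0$, the gradient $\nabla\varphi$ is strongly monotone with constant $\mu$; as $F$ is monotone (Assumption~\ref{hyp:F}), the regularized map $T_\epsilon=F+\epsilon\nabla\varphi$ is continuous and strongly monotone on $\Omega$ with constant $\epsilon\mu$, with $\DRLdom T_\epsilon=\Omega$. Arguing exactly as for $F$ in the proof of Proposition~\ref{t:existence}, $T_\epsilon$ is maximally monotone, and, taking $\gamma=2$ in \eqref{e:coercive T}, it is coercive with respect to $\Omega$; hence by Proposition~\ref{t:coercive} the set $\sol(T_\epsilon,\Omega)$ is nonempty and bounded, while strong monotonicity forces it to be a singleton $\{x_\epsilon\}$ (if $x,x'$ both solve it, testing each inequality at the other solution and adding gives $0\le\langle v-v',x'-x\rangle\le-\epsilon\mu\|x-x'\|^2$ with $v\in T_\epsilon(x)$, $v'\in T_\epsilon(x')$, so $x=x'$). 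Thus the path is well defined on $(0,\infty)$.

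Next I fix $\epsilon_0>0$ and take $\epsilon>0$ arbitrary. Writing the variational inequality for $x_\epsilon$ tested at the feasible point $x_{\epsilon_0}$, and the one for $x_{\epsilon_0}$ tested at $x_\epsilon$, and adding the two, I obtain
\[
\langle F(x_\epsilon)-F(x_{\epsilon_0}),\,x_{\epsilon_0}-x_\epsilon\rangle+\langle\epsilon\nabla\varphi(x_\epsilon)-\epsilon_0\nabla\varphi(x_{\epsilon_0}),\,x_{\epsilon_0}-x_\epsilon\rangle\ge 0.
\]
The first inner product is $\le 0$ by monotonicity of $F$. Decomposing $\epsilon\nabla\varphi(x_\epsilon)-\epsilon_0\nabla\varphi(x_{\epsilon_0})=\epsilon\bigl(\nabla\varphi(x_\epsilon)-\nabla\varphi(x_{\epsilon_0})\bigr)+(\epsilon-\epsilon_0)\nabla\varphi(x_{\epsilon_0})$, and then applying strong monotonicity of $\nabla\varphi$ to the first part and Cauchy--Schwarz to the second, yields
\[
\epsilon\mu\,\|x_\epsilon-x_{\epsilon_0}\|^2\le|\epsilon-\epsilon_0|\,\|\nabla\varphi(x_{\epsilon_0})\|\,\|x_\epsilon-x_{\epsilon_0}\|,
\]
so that $\|x_\epsilon-x_{\epsilon_0}\|\le(\epsilon\mu)^{-1}|\epsilon-\epsilon_0|\,\|\nabla\varphi(x_{\epsilon_0})\|$.

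Finally, restricting attention to $\epsilon\in[\tfrac12\epsilon_0,\tfrac32\epsilon_0]$ turns the last estimate into $\|x_\epsilon-x_{\epsilon_0}\|\le\frac{2}{\epsilon_0\mu}\|\nabla\varphi(x_{\epsilon_0})\|\,|\epsilon-\epsilon_0|$, and the constant is finite because $\varphi$ is differentiable. Letting $\epsilon\to\epsilon_0$ gives continuity at $\epsilon_0$, and as $\epsilon_0>0$ was arbitrary, $\epsilon\mapsto x_\epsilon$ is continuous --- in fact locally Lipschitz --- on $(0,\infty)$. I do not expect a genuine obstacle here; the only subtlety is that the Lipschitz constant $\tfrac{2}{\epsilon_0\mu}\|\nabla\varphi(x_{\epsilon_0})\|$ degenerates as $\epsilon_0\downarrow 0$, so this argument delivers continuity on the open half-line only, which is exactly what is claimed. (Coercivity of $F$ is not used in the continuity argument itself; it serves elsewhere in the section to guarantee, via Proposition~\ref{t:existence}, that $S_0\ne\emptyset$, so that the path $x_\epsilon$ actually approaches the solution set of $\vi(F,\Omega)$.)
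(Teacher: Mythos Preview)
Your proof is correct and takes a genuinely different route from the paper's. The paper argues indirectly via gap functions: it observes that $x_\epsilon$ is the unique minimizer over $\Omega$ of the dual gap function $G(\cdot;\epsilon\varphi)$ of $\vi(T_\epsilon,\Omega)$, uses coercivity of $F$ (through Proposition~\ref{t:existence}\eqref{t:existence ii}) to ensure this dual gap function is finite-valued and hence continuous, and then invokes a parametric stability result for convex minimization (Theorem~4.3.3 of Bank et al.) to obtain upper semicontinuity of the solution map, which together with single-valuedness yields continuity. Your argument instead manipulates the two defining variational inequalities directly, exploits monotonicity of $F$ and strong monotonicity of $\nabla\varphi$, and reads off the explicit estimate $\|x_\epsilon-x_{\epsilon_0}\|\le(\epsilon\mu)^{-1}|\epsilon-\epsilon_0|\,\|\nabla\varphi(x_{\epsilon_0})\|$.

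Your approach is both more elementary and more informative: it avoids the gap-function machinery and the external parametric-stability reference, it yields local Lipschitz continuity rather than mere continuity, and it makes transparent that coercivity of $F$ plays no role in the continuity statement itself. The paper's route, on the other hand, ties the result into the gap-function framework used throughout the section and illustrates how the parametric-stability viewpoint applies; but as a proof of the stated theorem, yours is sharper and self-contained.
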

\proof Let $x_\epsilon$ solve $\vi(T_\epsilon, \Omega)$. Since $\varphi$ is strongly 
convex, $\nabla\varphi$ is strongly monotone and hence
$T_\epsilon=F+\epsilon\nabla\varphi$ is strongly monotone. Hence, there exists a $\mu_\epsilon>0$
such that for any 
$x,y \in \Omega$, 
\begin{equation*}
 \langle T_\epsilon (x)-T_\epsilon (y), x-y \rangle \geq \mu_\epsilon \|x-y\|^2.
\end{equation*}This implies
\begin{equation}\label{e:reggap and dual gap}
 \sup_{y\in \Omega} \left[ \langle T_\epsilon (x), x-y \rangle- \mu_\epsilon \|x-y\|^2
\right]\geq \sup_{y\in \Omega}\langle T_\epsilon (y), x-y \rangle.
\end{equation}
The expression on the left side of \eqref{e:reggap and dual gap} is the regularized gap function $\theta(\cdot,\epsilon\varphi)$,
which is zero at $x=x_\epsilon$. The right hand side is the dual gap function
for $\vi(T_\epsilon , \Omega)$ (see \eqref{e:dual gap func}), which we denote by $G(x,\epsilon\varphi)$.
Since $ G(x;\epsilon\varphi)\geq 0$ for all $x\in \Omega$, \eqref{e:reggap and dual gap} implies that 
$ G(x_\epsilon;\epsilon\varphi)=0$ and hence $x_\epsilon \in S_{G(.;\epsilon\varphi)}$,  where $S_{G(.;\epsilon\varphi)}$
is the solution set of the convex minimization problem $\min_{x\in \Omega}G(.;\epsilon\varphi)$. 
Now by Proposition \ref{t:existence}\eqref{t:existence ii}, $G$ is finite valued, and hence $G$ 
continuous (since it is convex \cite[Theorem 10.1]{CA}) on the relative interior of $\Omega$, which is nonempty  
 as $\Omega$ is nonempty \cite[Theorem 6.2]{CA}.
Therefore, the map $\epsilon \mapsto S_{G(.;\epsilon\varphi)}$ is upper-semicontinuous as a set-valued map 
in the sense of \cite[Theorem 4.3.3]{BankGuddatKlatteKummerTammer}. 
However, the strong monotonicity of $T_\epsilon$ implies that $S_\epsilon$(=$S_{G(.;\epsilon\varphi)})$ is singleton 
and hence the map $\epsilon \mapsto x_\epsilon$ is continuous. \endproof

The type of continuity used in the above proof is not the same as outer semincontinuity defined in \ref{d:osc}, which 
is the same as graph closedness. 

Our next results are on the convergence of the sequences of  solutions (exact or approximate) to $\vi(T_\epsilon, \Omega)$.
  Before we define the concept of an approximate solution to $\vi(T_\epsilon, \Omega)$, let us introduce some notation. 
For a given $\epsilon>0$ we denote the regularized
gap function for $VI(T_{\epsilon}, \Omega)$ by $ \theta_{\alpha}(. ;\epsilon\varphi)$ and 
the D-gap function for $VI(T_{\epsilon}, \Omega)$ by $ \theta_{\alpha\beta}(. ;\epsilon\varphi)$. 
These are given by (similar to \eqref{e:reggap} and \eqref{e:Dgap})
\begin{equation}\label{e:reggap-teps}
 \theta_{\alpha}(x ;\epsilon\varphi)=\sup_{y\in \Omega} \left\{\langle T_\epsilon(x), x-y \rangle
-\frac{\alpha}{2}\|y-x\|^2\right\}
 \end{equation}
\begin{equation}\label{e:dgap-teps}
 \theta_{\alpha\beta}(x ;\epsilon\varphi)=\theta_{\alpha}(x ;\epsilon\varphi)-\theta_{\beta}(x ;\epsilon\varphi),\quad (\alpha<\beta).
 \end{equation}
We write this more succinctly using the projection. 
\begin{equation}\label{e:reggap-teps-explicit}
  \theta_{\alpha}(x;\epsilon\varphi)=\langle
T_{\epsilon}( x),
x-y^{\epsilon}_{\alpha}(x)
 \rangle-\frac{\alpha}{2}\|y^{\phi,\epsilon}_{\alpha}(x)-x_{\epsilon}\|^2,
\end{equation}
with 
\begin{equation}\label{e:yalpha epsilon}
y^{\epsilon}_{\alpha}(x)=P_\Omega[x-\frac{1}{\alpha}T_{\epsilon}( x)].
\end{equation}
The regularized gap function $\theta_{\beta}(. ;\epsilon\varphi)$ is defined analogously with, instead, the projection
$y^{\epsilon}_{\beta}(x)$. 

Recall that for any solution $\xbar_\epsilon$ of $VI(T_\epsilon, \Omega)$,
$ \theta_{\alpha\beta}(\xbar_\epsilon ;\epsilon\varphi)=0$. We define a point $x$ to
be an \textit{approximate
solution} to $VI(T_\epsilon, \Omega)$ with an error $\zeta>0$ if
\[ \theta_{\alpha\beta}(x ;\epsilon\varphi)\leq \zeta.\] 
\begin{thm}\label{t:mainconvergence_exactreg}
 Let $(\epsilon_k)_{k\in\Nbb}$ be a sequence of nonnegative scalars with $\epsilon_k \searrow 0$,
and let $(x^{k})_{k\in\Nbb} \in \Rbb^n$ be a sequence of
approximate solutions of $\vi(T_{\epsilon_k}, \Omega)$ with errors $\zeta_k\geq 0$.  
Assume that $\varphi$ is continuously differentiable.
If $x^{k}\rightarrow \bar{x}$ as $k\rightarrow \infty$ and if
$\zeta_k\searrow 0$, then $\bar{x}$ solves $\vi(F, \Omega)$.
\end{thm}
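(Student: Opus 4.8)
The plan is to first show that the natural residual of $x^k$ for $\vi(T_{\epsilon_k},\Omega)$ --- that is, $x^k-y^{\epsilon_k}_\beta(x^k)$ with $y^{\epsilon_k}_\beta(x)=P_\Omega(x-\tfrac1\beta T_{\epsilon_k}(x))$ --- tends to $0$, and then to pass to the limit in the projection fixed-point characterization of a variational inequality. The residual step rests on the standard lower bound for the $D$-gap function in terms of its natural residual, a bound that is uniform in the parameter $\epsilon$; the limiting step uses only continuity of $F$ on $\Omega$, nonexpansiveness of $P_\Omega$, and the local boundedness of $\nabla\varphi$ near $\bar x$ that continuous differentiability of $\varphi$ provides.

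For the first step I would argue as follows. Fix $x\in\Omega$ and $\epsilon>0$. The point $y^\epsilon_\beta(x)$ is precisely the (unique) maximizer in the definition \eqref{e:reggap-teps} of $\theta_\beta(\cdot\,;\epsilon\varphi)$, so $\theta_\beta(x;\epsilon\varphi)=\DRLip{T_\epsilon(x)}{x-y^\epsilon_\beta(x)}-\tfrac\beta2\|x-y^\epsilon_\beta(x)\|^2$, whereas using $y^\epsilon_\beta(x)\in\Omega$ merely as a feasible competitor in the supremum defining $\theta_\alpha(\cdot\,;\epsilon\varphi)$ gives $\theta_\alpha(x;\epsilon\varphi)\ge\DRLip{T_\epsilon(x)}{x-y^\epsilon_\beta(x)}-\tfrac\alpha2\|x-y^\epsilon_\beta(x)\|^2$. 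Subtracting and using $\beta>\alpha$ yields
\begin{equation*}
\theta_{\alpha\beta}(x;\epsilon\varphi)\ \ge\ \tfrac{\beta-\alpha}{2}\,\|x-y^\epsilon_\beta(x)\|^2 ,
\end{equation*}
which is the residual estimate of \cite[Theorem 3.2]{YamashitaTajiFukushima97} applied to $\vi(T_\epsilon,\Omega)$. Evaluating at $x=x^k$, $\epsilon=\epsilon_k$ and using $\theta_{\alpha\beta}(x^k;\epsilon_k\varphi)\le\zeta_k\searrow0$ gives $\|x^k-y^{\epsilon_k}_\beta(x^k)\|^2\le 2\zeta_k/(\beta-\alpha)\to0$, hence $x^k-y^{\epsilon_k}_\beta(x^k)\to0$, and therefore $y^{\epsilon_k}_\beta(x^k)\to\bar x$ as well.

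For the second step, note that $x^k\in\Omega$ (necessarily, since $\theta_{\alpha\beta}(x^k;\epsilon_k\varphi)$ is defined only where $T_{\epsilon_k}$ is), so $\bar x\in\Omega$ because $\Omega$ is closed. Since $\varphi\in C^1$, $\nabla\varphi$ is continuous and hence bounded on a neighbourhood of $\bar x$; as $x^k\to\bar x$ and $\epsilon_k\searrow0$ we get $\epsilon_k\nabla\varphi(x^k)\to0$, so by continuity of $F$ on $\Omega$, $T_{\epsilon_k}(x^k)=F(x^k)+\epsilon_k\nabla\varphi(x^k)\to F(\bar x)$. Since $P_\Omega$ is nonexpansive,
\begin{equation*}
y^{\epsilon_k}_\beta(x^k)=P_\Omega\Bigl(x^k-\tfrac1\beta T_{\epsilon_k}(x^k)\Bigr)\ \longrightarrow\ P_\Omega\Bigl(\bar x-\tfrac1\beta F(\bar x)\Bigr).
\end{equation*}
Comparing with $y^{\epsilon_k}_\beta(x^k)\to\bar x$ from the first step gives $\bar x=P_\Omega(\bar x-\tfrac1\beta F(\bar x))$, which is equivalent to $\DRLip{F(\bar x)}{y-\bar x}\ge0$ for all $y\in\Omega$; that is, $\bar x$ solves $\vi(F,\Omega)$.

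I expect the only genuine obstacle to be the control of the perturbation $\epsilon_k\nabla\varphi(x^k)$ in establishing $T_{\epsilon_k}(x^k)\to F(\bar x)$; this is exactly where continuous differentiability of $\varphi$ (rather than mere convexity) is convenient, as it furnishes local boundedness of $\nabla\varphi$ along the convergent sequence $x^k$. Everything else is a routine limiting argument once the uniform $D$-gap residual bound is in hand.
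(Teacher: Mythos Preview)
Your argument is correct and takes a somewhat different route from the paper's. The paper passes to the limit directly in the D-gap function: using continuity of the projection it shows $\theta_\alpha(x^k;\epsilon_k\varphi)\to\theta_\alpha(\bar x)$ and $\theta_\beta(x^k;\epsilon_k\varphi)\to\theta_\beta(\bar x)$, hence $\theta_{\alpha\beta}(x^k;\epsilon_k\varphi)\to\theta_{\alpha\beta}(\bar x)$, and then combines this with $\theta_{\alpha\beta}(x^k;\epsilon_k\varphi)\le\zeta_k\to0$ to conclude $\theta_{\alpha\beta}(\bar x)=0$, invoking the characterization of solutions as zeros of the D-gap function. You instead isolate the residual bound $\theta_{\alpha\beta}(x;\epsilon\varphi)\ge\tfrac{\beta-\alpha}{2}\|x-y^\epsilon_\beta(x)\|^2$, use it to force the natural residual to zero, and pass to the limit in the projection fixed-point identity to land on $\bar x=P_\Omega(\bar x-\tfrac1\beta F(\bar x))$. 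Both proofs rest on the same continuity ingredients (continuity of $F$, of $\nabla\varphi$, and of $P_\Omega$); the paper's version is slightly more modular in that it reduces everything to a known property of the unregularized D-gap function, while yours is arguably more transparent because it arrives directly at the natural-map characterization without appealing to that black box.

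One small slip worth flagging: your parenthetical that $x^k\in\Omega$ ``necessarily, since $\theta_{\alpha\beta}(x^k;\epsilon_k\varphi)$ is defined only where $T_{\epsilon_k}$ is'' is not right in this setting --- the whole point of the D-gap function is that it gives an \emph{unconstrained} reformulation on $\Rbb^n$, and the paper's notion of approximate solution does not require $x^k\in\Omega$. This does not damage your proof, however: the residual bound you derive holds for all $x\in\Rbb^n$, and $\bar x\in\Omega$ falls out automatically from $\bar x=P_\Omega(\bar x-\tfrac1\beta F(\bar x))$ (or equivalently from $y^{\epsilon_k}_\beta(x^k)\in\Omega$ together with $y^{\epsilon_k}_\beta(x^k)\to\bar x$).
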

\proof Choose $0<\alpha<\beta$ and assume that $x^{k}\rightarrow \bar{x}$. Since the projection
map onto a closed convex set is continuous, we have $y^{\epsilon_k}_{\alpha}(x^{k})\rightarrow y_{\alpha}(\xbar)$ and
$y^{\epsilon_k}_{\beta}(x^{k})\rightarrow y_{\beta}(\xbar)$ as $k \rightarrow \infty$,
where $y^{\epsilon_k}_{\alpha}$ is defined by \eqref{e:yalpha epsilon}.
Using \eqref{e:reggap-teps-explicit},
\begin{equation*}
 \lim_{k\rightarrow \infty}  \theta_{\alpha}(x^{k};\epsilon_k\varphi)=\langle
F(\bar{x}), \bar{x}-y_\alpha(\bar{x})
 \rangle-\frac{\alpha}{2}\|y_\alpha(\bar{x})-\bar{x}\|^2=\theta_{\alpha}(\bar{x}
)
\end{equation*}
and
\begin{equation}
 \lim_{k\rightarrow \infty}  \theta_{\beta}(x^{k};\epsilon_k\varphi)=\langle
F(\bar{x}), \bar{x}-y_\beta(\bar{x})
 \rangle-\frac{\beta}{2}\|y_\beta(\bar{x})-\bar{x}\|^2=\theta_{\beta}(\bar{x}).
\end{equation}
Now, form \eqref{e:dgap-teps}
\begin{eqnarray}\label{e:relation of limits of dgap}
 0 \leq \lim_{k\rightarrow \infty}
 \theta_{\alpha\beta}(x^{k};\epsilon_k\varphi)&=&\lim_{k\rightarrow \infty}
 \theta_{\alpha}(x^{k};\epsilon_k\varphi)-\lim_{k\rightarrow \infty}
 \theta_{\beta}(x^{k};\epsilon_k\varphi)\\& =&\theta_{\alpha}(\bar{x}) -\theta_{\beta}(\bar{x}) =\theta_{\alpha\beta}(\bar{x}) 
\end{eqnarray}
and since $x^{k}$ is a sequence of approximate solutions
\begin{equation}\label{e:limit of theta alpha beta}
 \lim_{k\rightarrow \infty}\theta_{\alpha\beta}(x^{k};\epsilon_k\varphi)\leq  \lim_{k\rightarrow \infty}\zeta_k=0.
\end{equation}We conclude from \eqref{e:relation of limits of dgap} and \eqref{e:limit of theta alpha beta} that
 $\theta_{\alpha\beta} (\bar{x})=0$ and therefore $\bar{x}$ solves $\vi(F, \Omega)$ \cite[Theorem 10.3.3]{FacchineiPang}. \endproof\\
 As noted in the introduction, 
solving $\vi(T_{\epsilon},\Omega)$ is equivalent to minimizing the gap
function $\theta_{\alpha}(. ;\epsilon\varphi)$ over $\Omega$ or 
$\theta_{\alpha\beta}(. ;\epsilon\varphi)$ over $\Rbb^n$. If we  minimize
$ \theta_{\alpha}(. ;\epsilon\varphi)$ over $\Omega 
$ using standard optimization methods for the constrained
case, 
we will in effect generate a sequence of solutions $x_{\epsilon}$ which are
actually be considered as solutions to $\vi(T_{\epsilon}, \Omega)$.
Alternatively, if we minimize the D-gap function $ \theta_{\alpha\beta}(. ;\epsilon\varphi)$  over $\Rbb^n$,
we also generate a sequence of solutions to  $\vi(T_{\epsilon}, \Omega)$.

Using the error bounds for a strongly monotone variational inequalities, we now deduce an error bound for the 
distance between any
point and a true solution of $\vi(T_\epsilon, \Omega)$ in terms of the corresponding D-gap function
$\theta_{\alpha\beta}(.;\epsilon\varphi)$, provided that $F$ is Lipschitz continuous and $\varphi$ and strong convex.
This error bound can be used as an implementable stopping criterion for the 
 algorithms aimed at approximately solving $\vi(T_\epsilon, \Omega)$. 

The proof of following lemma 
goes along the lines the proof of Theorem 3.2 in \cite{Dutta12} adapted to $\vi(T_{\epsilon},\Omega)$.
\begin{lemma}
  Let $\varphi$ be strongly convex with modulus $\rho$, $F$ and $\nabla\varphi$ be Lipschitz on $\Omega$ with constants 
$L$ and $M$ respectively. If $x_\epsilon$ solves $\vi(T_{\epsilon},\Omega)$, then, for any $x\in\Omega$,
 \begin{equation}\label{e:duttaineq}
 \|x-x_\epsilon\|\leq
\frac{\beta+L+\epsilon M}{\epsilon\rho}\|y^{\epsilon}_\beta(x)-x\|,
\end{equation}
where $y^{\epsilon}_\beta(x)$ is the point where the supremum in $\theta_{\beta}(x_{\epsilon},\epsilon\varphi)$ is attained
and given by $y^{\epsilon}_{\beta}(x)=P_\Omega[x-\frac{1}{\beta}T_{\epsilon}( x)]$. 
\end{lemma}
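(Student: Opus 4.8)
The plan is to reproduce, for the operator $T_\epsilon$, the error-bound argument for strongly monotone variational inequalities in terms of the natural (projection) residual, in the spirit of \cite[Theorem 3.2]{Dutta12}. The first step is to record the two structural constants that $T_\epsilon$ inherits from its ingredients. Since $\varphi$ is strongly convex with modulus $\rho$, its gradient $\nabla\varphi$ is strongly monotone with modulus $\rho$; as $F$ is monotone, $T_\epsilon=F+\epsilon\nabla\varphi$ is then strongly monotone with modulus $\epsilon\rho$, i.e.\ $\langle T_\epsilon(x)-T_\epsilon(x_\epsilon),x-x_\epsilon\rangle\ge\epsilon\rho\|x-x_\epsilon\|^2$. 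Likewise, $F$ Lipschitz with constant $L$ and $\nabla\varphi$ Lipschitz with constant $M$ give that $T_\epsilon$ is Lipschitz on $\Omega$ with constant $L+\epsilon M$. These are the only properties of $T_\epsilon$ used below.

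The second step is to combine the variational characterization of the projection with the variational inequality solved by $x_\epsilon$. Writing $y\equiv y^{\epsilon}_\beta(x)=P_\Omega[x-\tfrac1\beta T_\epsilon(x)]$, the obtuse-angle property of $P_\Omega$, after multiplying through by $\beta>0$, reads
\[
\langle \beta(x-y)-T_\epsilon(x),\,z-y\rangle\le 0\qquad\forall\,z\in\Omega .
\]
I would specialize this inequality to $z=x_\epsilon$, and specialize the inequality $\langle T_\epsilon(x_\epsilon),z-x_\epsilon\rangle\ge 0$ (valid for all $z\in\Omega$ since $x_\epsilon$ solves $\vi(T_\epsilon,\Omega)$) to $z=y$. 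Adding the two resulting scalar inequalities and cancelling yields
\[
\langle T_\epsilon(x)-T_\epsilon(x_\epsilon),\,x_\epsilon-y\rangle\ \ge\ \beta\,\langle x-y,\,x_\epsilon-y\rangle .
\]

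The third step is a routine estimate of each side. On the left, decompose $x_\epsilon-y=(x_\epsilon-x)+(x-y)$; the term $\langle T_\epsilon(x)-T_\epsilon(x_\epsilon),x_\epsilon-x\rangle$ is bounded above by $-\epsilon\rho\|x-x_\epsilon\|^2$ via strong monotonicity, and the cross term by $(L+\epsilon M)\|x-x_\epsilon\|\,\|x-y\|$ via the Lipschitz bound and Cauchy--Schwarz. On the right, expand $\langle x-y,x_\epsilon-y\rangle=\langle x-y,x_\epsilon-x\rangle+\|x-y\|^2\ge\langle x-y,x_\epsilon-x\rangle\ge-\|x-y\|\,\|x-x_\epsilon\|$. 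Substituting both estimates into the displayed inequality gives $\epsilon\rho\|x-x_\epsilon\|^2\le(\beta+L+\epsilon M)\,\|x-x_\epsilon\|\,\|x-y\|$, and dividing by $\|x-x_\epsilon\|$ — the asserted bound being trivial when $x=x_\epsilon$ — produces exactly \eqref{e:duttaineq}.

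The argument is self-contained and elementary; the one place demanding care is the bookkeeping in the second step — combining the projection inequality with the variational inequality and keeping the signs straight when splitting $x_\epsilon-y$ — since a sign error there collapses the whole estimate. The moduli computations in the first step and the Cauchy--Schwarz/triangle-inequality manipulations in the third are entirely standard.
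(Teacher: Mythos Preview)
Your proof is correct and follows essentially the same route as the paper's: both combine the optimality/projection characterization of $y^{\epsilon}_\beta(x)$ with the variational inequality for $x_\epsilon$, split $x_\epsilon-y^{\epsilon}_\beta(x)$ through the intermediate point $x$, and then invoke the strong monotonicity modulus $\epsilon\rho$ and Lipschitz constant $L+\epsilon M$ of $T_\epsilon$ together with Cauchy--Schwarz. The only cosmetic difference is that the paper phrases the characterization of $y^{\epsilon}_\beta(x)$ as the first-order optimality condition for the strongly convex minimization defining $\theta_\beta(x;\epsilon\varphi)$, while you phrase it as the obtuse-angle property of the projection---these are the same inequality.
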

\proof Since $\varphi$ is strongly convex on $\Omega$ with modulus $\rho$,
$\nabla\varphi$ is strongly monotone on $\Omega$ with modulus $\rho$. Since $F$ is monotone, 
$T_\epsilon$ is strongly monotone with modulus of strong monotonicity $\epsilon\rho$. Also,
$T_\epsilon$ is Lipschitz with constant $L+\epsilon M$. From \eqref{e:reggap-teps},
$y^{\epsilon}_\beta(x)$ maximizes the function $y \rightarrow \langle T_\epsilon(x), x-y \rangle-\frac{\beta}{2}\langle y-x,
y-x\rangle$. Hence,
$y^{\epsilon}_\beta(x)$
is the unique minimizer of the strongly convex function 
$y \rightarrow \langle T_\epsilon(x), y-x \rangle+\frac{\beta}{2}\langle y-x,
y-x\rangle$. The optimality conditions yield
\begin{equation*}
 \langle T_\epsilon(x)+\beta(y^{\epsilon}_\beta(x)-x), x_\epsilon-y^{\epsilon}_\beta(x)\rangle\geq 0.
\end{equation*}
Moreover, since $x_\epsilon$ solves $\vi(T_{\epsilon},\Omega)$, we have
\begin{equation*}
 \langle T_\epsilon(x_\epsilon), y^{\epsilon}_\beta(x)-x_\epsilon\rangle \geq 0.
\end{equation*}
Hence the two inequalities above yield
\begin{equation*}
  \langle T_\epsilon(x)-T_\epsilon(x_\epsilon)+\beta(y^{\epsilon}_\beta(x)-x), y^{\epsilon}_\beta(x)-x_\epsilon \rangle\leq 0.
\end{equation*} 
Now, 
\begin{equation*}
  \langle T_\epsilon(x)-T_\epsilon(x_\epsilon), y^{\epsilon}_\beta(x)-x_\epsilon+x-x \rangle +
\beta\langle y^{\epsilon}_\beta(x)-x, y^{\epsilon}_\beta(x)-x_\epsilon+x-x \rangle\geq 0,
\end{equation*}
which implies
\begin{eqnarray*}
 \langle T_\epsilon(x)-T_\epsilon(x_\epsilon), x-x_\epsilon \rangle &+& \beta \langle y^{\epsilon}_\beta(x)-x, y^{\epsilon}_\beta(x)-x \rangle \leq 
 -\beta\langle y^{\epsilon}_\beta(x)-x, x-x_\epsilon \rangle \\ & &- \langle T_\epsilon(x)-T_\epsilon(x_\epsilon), y^{\epsilon}_\beta(x)-x \rangle.
\end{eqnarray*}
Since $T_\epsilon$ is Lipschitz with constant $L+\epsilon M$ and strongly monotone with modulus $\epsilon\rho$, we get
\begin{equation*}
 \epsilon\rho\|x-x_\epsilon\|^2\leq\beta\|y^{\epsilon}_\beta(x)-x\|\|x-x_\epsilon\|+(L+\epsilon M)\|y^{\epsilon}_\beta(x)-x \|\|x-x_\epsilon\|.
\end{equation*}
Therefore,
\begin{equation*}
 \|x-x_\epsilon\|\leq \frac{\beta +L+\epsilon M}{\epsilon\rho}\|y^{\epsilon}_\beta(x)-x \|
\end{equation*}
as claimed.
\endproof

\begin{propn}\label{t:errorbound}
 Let $\varphi$ be strongly convex with modulus $\rho$, and let $F$ and $\nabla\varphi$ be
 Lipschitz on $\Omega$ with constants $L$ and $M$ respectively.
 If $x_\epsilon$ solves $\vi(T_{\epsilon},\Omega)$, then, for any $x\in\Omega$,
 \begin{equation}\label{e:errorbound}
 \|x-x_\epsilon\|\leq \frac{\beta+L+\epsilon M}{\epsilon\rho
}\sqrt{\frac{2}{(\beta-\alpha)}\theta_{\alpha\beta}(x;\epsilon \varphi)}.
\end{equation} 
\end{propn}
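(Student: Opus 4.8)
The plan is to combine the lemma immediately preceding the proposition with the standard lower bound relating the D-gap function to the squared residual of the projected point $y^\epsilon_\beta(x)$. That lemma already delivers
\[
\|x - x_\epsilon\| \le \frac{\beta + L + \epsilon M}{\epsilon\rho}\,\|y^\epsilon_\beta(x) - x\|,
\]
so the only work left is to bound $\|y^\epsilon_\beta(x) - x\|$ from above by $\sqrt{\tfrac{2}{\beta-\alpha}\,\theta_{\alpha\beta}(x;\epsilon\varphi)}$.

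To obtain that bound I would use $y^\epsilon_\beta(x)$ --- which lies in $\Omega$, being a projection onto $\Omega$ --- as a (suboptimal) test point in the supremum \eqref{e:reggap-teps} defining $\theta_\alpha(x;\epsilon\varphi)$, giving
\[
\theta_\alpha(x;\epsilon\varphi) \ge \langle T_\epsilon(x), x - y^\epsilon_\beta(x)\rangle - \frac{\alpha}{2}\|y^\epsilon_\beta(x) - x\|^2 ,
\]
while $y^\epsilon_\beta(x)$, being the actual maximizer in the definition of $\theta_\beta(x;\epsilon\varphi)$, gives
\[
\theta_\beta(x;\epsilon\varphi) = \langle T_\epsilon(x), x - y^\epsilon_\beta(x)\rangle - \frac{\beta}{2}\|y^\epsilon_\beta(x) - x\|^2 .
\]
Subtracting cancels the linear terms and, via \eqref{e:dgap-teps}, yields
\[
\theta_{\alpha\beta}(x;\epsilon\varphi) \ge \frac{\beta-\alpha}{2}\,\|y^\epsilon_\beta(x) - x\|^2 ,
\]
so that $\|y^\epsilon_\beta(x) - x\| \le \sqrt{\tfrac{2}{\beta-\alpha}\,\theta_{\alpha\beta}(x;\epsilon\varphi)}$ since $\beta>\alpha$.

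Feeding this into the lemma's estimate produces \eqref{e:errorbound} at once. I do not anticipate a genuine obstacle: the argument is just the preceding lemma together with the elementary comparison of the two regularized gap functions above, and the only fact needing a word of justification is that $y^\epsilon_\beta(x)\in\Omega$ qualifies as a competitor in the supremum defining $\theta_\alpha(x;\epsilon\varphi)$, which is clear. The hypotheses on $\varphi$, $F$ and $\nabla\varphi$ enter only through the cited lemma.
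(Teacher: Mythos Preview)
Your proposal is correct and follows essentially the same route as the paper: combine the preceding lemma's estimate \eqref{e:duttaineq} with the bound $\|x-y^{\epsilon}_\beta(x)\|^2 \le \tfrac{2}{\beta-\alpha}\,\theta_{\alpha\beta}(x;\epsilon\varphi)$. The only cosmetic difference is that the paper cites this last inequality from \cite[Lemma~4.2, Eq.~(19)]{YamashitaTajiFukushima97}, whereas you supply the (standard) two-line derivation by testing $y^{\epsilon}_\beta(x)$ in the supremum defining $\theta_\alpha(x;\epsilon\varphi)$.
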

\proof Adapting \cite[Lemma 4.2, Eq(19)]{YamashitaTajiFukushima97} for $\vi(T_\epsilon,\Omega)$,
\begin{equation}\label{e:YTFineq}
 \|x-y^{\epsilon}_\beta(x)\|^2 \leq
\frac{2}{(\beta-\alpha)}\theta_{\alpha\beta}(x;\epsilon\varphi).
\end{equation}
Now, from \eqref{e:duttaineq} and \eqref{e:YTFineq} 
\begin{equation}
 \|x-x_\epsilon\|\leq  \frac{\beta+L+\epsilon M}{\epsilon\rho
}\sqrt{\frac{2}{(\beta-\alpha)}\theta_{\alpha\beta}(x;\epsilon \varphi)}
\end{equation}
 This completes the proof.
\endproof
\subsection{Sequential inexact descent method}
In this section we propose a sequential inexact descent method to solve 
the $VI(F,\Omega)$ through the regularized problems  $VI(T_\epsilon,\Omega)$ where
$T_\epsilon=F+\epsilon\nabla\varphi$. It is natural  to look for the exact 
solutions of $\vi(T_{\epsilon_k},\Omega)$, however, it is not practically 
possible to run the algorithm infinitely. We therefore must be satisfied with {\em approximation}  
of the solutions to $\vi(T_{\epsilon_k},\Omega)$ for each $k$ with an error tolerance $\tau_k$.  
Convergence behavior of the sequence of approximate solutions will then follow from Proposition \ref{t:mainconvergence_exactreg}.

Choose a starting point $x^{k,0}=\xbar^{0}$, $\epsilon_0$, $\alpha_0$ and $\beta_0$. 
We solve the unconstrained minimization problem with the objective function 
$\theta_{\alpha_k\beta_k}(.,\epsilon_k\varphi)$ for $k=0,1,2,...$. 
For each $k$, we collect the approximate solution $x^k$ and initialize the inner iteration for 
solving $\vi(T_{\epsilon_{k+1}},\Omega)$ with the point $x^{k+1,0}=x^k$. The descent method in the inner iteration
of Algorithm \ref{alg} can be chosen to be any descent method that achieves sufficient decrease
in the direction of the descent so that the convergence is guaranteed. 
The regularization parameters $\epsilon_k$ are updated so that
$\epsilon_k\rightarrow 0$ as $k \rightarrow \infty$ and the
parameters $\alpha_k$ and $\beta_k$ are updated so that $\alpha_{k+1}\geq \alpha_k$ and $\beta_{k+1}\leq \beta_k$.

\begin{algorithm}[h]
\KwData{Fix sequences of error tolerances  
$(\tau_{k})_{k \in \Nbb}$ and regularization parameters
$(\epsilon_k)_{k \in \Nbb}$ with $\epsilon_{k}\rightarrow 0$ as $k\rightarrow \infty$.
For $k=j=0$, choose the point $x^{0}$,  parameters: $\beta_0>\alpha_0>0$.
}
\For{$k=0,1,2,\dots$}{
\textbf{Inner iteration:} {\em approximately solve $\vi(T_{\epsilon_k},\Omega)$.}\\
$x^{k,0}=x^k$\\
\While{$ \|x^{k,j}- x_{\epsilon_k}\|>\tau_{k}$}{
\vskip .5 cm
Apply a descent method to the unconstrained minimization of
$\theta_{\alpha_k\beta_k}(.,\epsilon_k\varphi)$ with $x^{k,0}$ as starting point
while updating $j$.
\vskip .5 cm
}
{\bf Update:} Set $x^{k+1}=x^{k, j}$,  
choose $\alpha_{k+1},\beta_{k+1}$,
increment $k=k+1$, and reset $j=0$.
}
\caption{Sequential inexact descent algorithm}\label{alg}
\end{algorithm}
We note that many choices exist for the descent method that is used in the inner iteration of Algorithm \ref{alg}.
For example, it can be the
descent method proposed in \cite{YamashitaTajiFukushima97} which is free from calculating the derivative
of $\theta_{\alpha_k\beta_k}(.,\epsilon_k\varphi)$. Another possibility is the descent method 
in \cite{LiNg09}.  

\begin{remark}
The termination of the inner iteration requires the knowledge of
the  solution  $x_{\epsilon_k}$. It is clear that the Algorithm 1 is
implementable as long as the error estimates for $\|x^{k,j}-x_{\epsilon_k}\|$ are computable. The error bound
for $\vi(T_{\epsilon_k},\Omega)$ in Proposition \ref{t:errorbound}
is very useful to fill this gap. If $\nabla\varphi$ is $\rho$-strongly monotone, 
Lipschitz continuous over $\Omega$ with modulus $M$ and if $F$ is Lipschitz continuous
over $\Omega$ with constant $L$, then $T_{\epsilon_k}$ is strongly monotone with modulus $\epsilon_k\rho$ and Lipschitz 
with constant $L+\epsilon_k M$. Then according to 
\eqref{e:errorbound}, 
\begin{equation}\label{e:ineq_dist}
 \|x- x_{\epsilon_k}\|\leq 
L_k\sqrt{\theta_{\alpha_k\beta_k}(x,\epsilon_k\varphi)},
~\mbox{where}~ 
L_k= \frac{\beta_k+L+\epsilon_k M}{\epsilon_k\rho
}\sqrt{\frac{2}{(\beta_k-\alpha_k)}}. 
\end{equation}
Hence the stopping  criterion in the inner iteration of Algorithm \ref{alg}  can now be replaced by the
implementable rule
\begin{equation}\label{alternate_stoppingrule}
 \mbox{while}~~\theta_{\alpha_k\beta_k}(x^{k,j},\epsilon_k\varphi)>\frac{\tau_k^2}{L_k^2}
\end{equation}
since $x^{k,j}$ satisfying \eqref{alternate_stoppingrule} also satisfies $ \|x^{k,j}- x_{\epsilon_k}\|>\tau_{k}$.
\end{remark}
We now discuss the convergence of Algorithm \ref{alg} under appropriate assumptions on $F$, $\varphi$ and $\Omega$
based on the assumption that for each $k$, the descent method chosen for the inner iteration converges. 
\begin{thm}\label{t:convergenceofSIDA}
Consider the Algorithm \ref{alg} with the stopping rule replaced by the alternative stopping rule
\eqref{alternate_stoppingrule}. Assume that $F$ and $\nabla\varphi$ are Lipschitz and $\nabla\varphi$
is strongly monotone. Given a sequence of parameters $(\epsilon_k)_{k \in \mathbb{N}}$ such that
 $\epsilon_k\rightarrow 0$ as $k\rightarrow \infty$ and and a sequence
of stable error tolerances $(\tau_k)_{k \in \mathbb{N}}$, $\tau_k=\tau>0$, assume that 
for each $k$, the descent method in the inner iteration converges.
Then all
the cluster points of the sequence $(x^k)_{k \in \mathbb{N}}$ of inexact solutions generated by 
Algorithm  \ref{alg} are solutions to $\vi(F, \Omega)$.
\end{thm}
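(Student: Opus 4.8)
The plan is to leverage the implementable stopping rule \eqref{alternate_stoppingrule} together with the fact that, although the prescribed tolerance $\tau$ is held fixed, the error–bound constant $L_k$ blows up as $\epsilon_k\searrow 0$, so that the $D$-gap residuals of the iterates $x^k$ nevertheless tend to zero; a limiting argument on the $D$-gap functions, in the spirit of Theorem \ref{t:mainconvergence_exactreg} but accounting for the drift of the parameters $\alpha_k,\beta_k$, then delivers the claim.

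First I would record what the hypotheses on $F$ and $\varphi$ buy us. Since $\nabla\varphi$ is strongly monotone with modulus $\rho$ and $F$ is monotone, $T_{\epsilon_k}=F+\epsilon_k\nabla\varphi$ is strongly monotone with modulus $\epsilon_k\rho$; since $F$ and $\nabla\varphi$ are Lipschitz with constants $L$ and $M$, $T_{\epsilon_k}$ is Lipschitz with constant $L+\epsilon_k M$. Hence $\vi(T_{\epsilon_k},\Omega)$ has a (unique) solution $x_{\epsilon_k}$, and Proposition \ref{t:errorbound} applies, yielding \eqref{e:ineq_dist} with $L_k=\tfrac{\beta_k+L+\epsilon_k M}{\epsilon_k\rho}\sqrt{2/(\beta_k-\alpha_k)}$. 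Because the updates keep $\alpha_0\le\alpha_k<\beta_k\le\beta_0$, one has $0<\beta_k-\alpha_k\le\beta_0-\alpha_0$ and $\beta_k+L\ge\alpha_0+L>0$, so $L_k\ge\tfrac{\alpha_0+L}{\epsilon_k\rho}\sqrt{2/(\beta_0-\alpha_0)}\to\infty$ as $\epsilon_k\searrow 0$, and thus $\tau^2/L_k^2\to 0$. For each $k$, the $D$-gap function $\theta_{\alpha_k\beta_k}(\cdot;\epsilon_k\varphi)$ is nonnegative and attains the value $0$ precisely at $x_{\epsilon_k}$; by hypothesis the inner descent method converges, so $\theta_{\alpha_k\beta_k}(x^{k,j};\epsilon_k\varphi)\to 0$ as $j\to\infty$. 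Since $\tau^2/L_k^2>0$, the loop therefore exits after finitely many steps at a point $x^k$ with $\zeta_k:=\theta_{\alpha_k\beta_k}(x^k;\epsilon_k\varphi)\le\tau^2/L_k^2$, hence $\zeta_k\to 0$. (Equation \eqref{e:ineq_dist} simultaneously gives $\|x^k-x_{\epsilon_k}\|\le\tau$, which is exactly what the original termination criterion demands.)

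Next I would pass to the limit. Let $\bar x$ be any cluster point of $(x^k)$, say $x^{k_j}\to\bar x$. The sequences $(\alpha_k)$ and $(\beta_k)$ are monotone and contained in $[\alpha_0,\beta_0]$, hence convergent, $\alpha_k\to\bar\alpha$, $\beta_k\to\bar\beta$, with $\alpha_0\le\bar\alpha\le\bar\beta\le\beta_0$; here one uses that the updates are chosen so that the gap does not close, i.e.\ $\bar\alpha<\bar\beta$ (implicit in the algorithm's requirement $\beta_k>\alpha_k$ and needed for the $D$-gap functions to remain merit functions in the limit). By the explicit representation \eqref{e:reggap-teps-explicit}, $\theta_\alpha(x;\epsilon\varphi)=\langle F(x)+\epsilon\nabla\varphi(x),\,x-y\rangle-\tfrac{\alpha}{2}\|y-x\|^2$ with $y=P_\Omega[\,x-\tfrac1\alpha(F(x)+\epsilon\nabla\varphi(x))\,]$; since $P_\Omega$ is nonexpansive and $F,\nabla\varphi$ are continuous, the map $(x,\alpha,\epsilon)\mapsto\theta_\alpha(x;\epsilon\varphi)$ is jointly continuous on $\Rbb^n\times(0,\infty)\times[0,\infty)$, and at $\epsilon=0$ it is the regularized gap function $\theta_\alpha$ of $\vi(F,\Omega)$. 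Therefore along the subsequence $\theta_{\alpha_{k_j}}(x^{k_j};\epsilon_{k_j}\varphi)\to\theta_{\bar\alpha}(\bar x)$ and $\theta_{\beta_{k_j}}(x^{k_j};\epsilon_{k_j}\varphi)\to\theta_{\bar\beta}(\bar x)$, so that
\begin{equation*}
0\le\theta_{\bar\alpha\bar\beta}(\bar x)=\lim_{j\to\infty}\theta_{\alpha_{k_j}\beta_{k_j}}(x^{k_j};\epsilon_{k_j}\varphi)=\lim_{j\to\infty}\zeta_{k_j}\le\lim_{j\to\infty}\tau^2/L_{k_j}^2=0.
\end{equation*}
Hence $\theta_{\bar\alpha\bar\beta}(\bar x)=0$, and since $\bar\beta>\bar\alpha>0$ the $D$-gap characterization \cite[Theorem 3.2]{YamashitaTajiFukushima97} gives that $\bar x$ solves $\vi(F,\Omega)$.

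The crux — and the reason one cannot simply quote Theorem \ref{t:mainconvergence_exactreg} — is this passage to the limit with the parameters $\alpha_k,\beta_k$ varying: it requires joint continuity of $\theta_\alpha(x;\epsilon\varphi)$ in all three arguments and, critically, the separation $\bar\alpha<\bar\beta$ of the limiting parameters, so that $\theta_{\bar\alpha\bar\beta}$ is still a genuine merit function for $\vi(F,\Omega)$. The role of the hypotheses $\epsilon_k\to0$ and Lipschitz/strong monotonicity is precisely to force $L_k\to\infty$, which is what turns the fixed tolerance $\tau$ into vanishing $D$-gap residuals $\zeta_k$.
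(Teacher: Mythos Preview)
Your proof is correct and follows essentially the same route as the paper: establish that the stopping rule forces $\zeta_k:=\theta_{\alpha_k\beta_k}(x^k;\epsilon_k\varphi)\le\tau^2/L_k^2\to 0$ because $L_k\to\infty$, then pass to the limit in the $D$-gap function to conclude $\bar x\in S_0$.

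The one point on which you are more careful than the paper is the handling of the varying parameters $\alpha_k,\beta_k$. The paper simply invokes Theorem~\ref{t:mainconvergence_exactreg}, but that theorem is stated and proved for \emph{fixed} $\alpha,\beta$; strictly speaking, one must argue as you do via joint continuity of $(x,\alpha,\epsilon)\mapsto\theta_\alpha(x;\epsilon\varphi)$ and the monotone convergence $\alpha_k\to\bar\alpha$, $\beta_k\to\bar\beta$. You also correctly flag the implicit requirement $\bar\alpha<\bar\beta$, without which $\theta_{\bar\alpha\bar\beta}$ ceases to be a merit function for $\vi(F,\Omega)$; the paper's update rule $\alpha_{k+1}\ge\alpha_k$, $\beta_{k+1}\le\beta_k$ does not by itself guarantee this, so your caveat is well placed. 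In short, your argument is the paper's argument with a gap in the latter's citation of Theorem~\ref{t:mainconvergence_exactreg} properly filled in.
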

\proof 
Since our stopping rule $\theta_{\alpha_k\beta_k}(x^{k,j},\epsilon_k\varphi)>\frac{\tau_k}{L_k}$ terminates
the iterations early, the inner iteration in Algorithm \ref{alg} is an early terminated variant of the
descent method that is chosen. Hence for any fixed $k$, any accumulation point 
of the sequence $x^{k,j}$ delivers an approximate solution $x^k$ to the problem
$\vi(T_{\epsilon_k},\Omega)$. Since $x^k$ violates the stopping rule, $\theta_{\alpha_k\beta_k}(x^{k},\epsilon_k\varphi)\leq \frac{\tau_k}{L_k}$
where $L_k$ is given as in \eqref{e:ineq_dist}, and since we chose
stable error tolerances $\tau_k=\tau$, we have $\frac{\tau_k}{L_k}\searrow 0$ since $L_k\to\infty$ as $k \rightarrow \infty$.
Hence by Theorem \ref{t:mainconvergence_exactreg} all
the cluster points of $(x^k)_{k\in\Nbb}$ are solutions to $\vi(F, \Omega)$.
\endproof
\begin{remark}
 The convergence of the sequence (a subsequence if necessary) of  inexact
solutions $(x^k)_{k\in\Nbb}$ generated in the  Algorithm \ref{alg} is guaranteed 
provided $(x^k)_{k\in\Nbb}$ is bounded. We now establish the sufficient conditions 
for the boundedness  of the sequence $(x^k)_{k\in\Nbb}$ along the similar lines of
\cite{FacchineiKanzow99}, however, without using the Mountain Pass Theorem.
We need the following Lemma.
\end{remark}
\begin{lemma}\label{Uniformcont}
Let $K\subset \Rbb^n$ be compact set and let $F$ and $\nabla\varphi$ be continuous functions on
$K$. Then for any $\epsilon'>0$ the gap function
$\theta_{\alpha\beta}(.,\epsilon\varphi)$, $0<\alpha<\beta$ is uniformly continuous as a function of $(x, \epsilon)$ on $K\times [0,
\epsilon']$ . In particular, for every
$\delta>0$, there exists an $\epsilonbar>0$ such that 
 \begin{equation}
  |\theta_{\alpha\beta}(x,\epsilon\varphi)-\theta_{\alpha\beta}(x)|\leq \delta
 \end{equation}
for all $(x, \epsilon)\in K \times [0, \epsilonbar]$.
\end{lemma}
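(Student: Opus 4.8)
The plan is to reduce the whole statement to the \emph{joint} continuity of the two regularized gap functions $\theta_\alpha(\cdot,\epsilon\varphi)$ and $\theta_\beta(\cdot,\epsilon\varphi)$ on the compact set $K\times[0,\epsilon']$, after which the Heine--Cantor theorem (continuous on a compact set $\Rightarrow$ uniformly continuous) gives the first assertion for free, and a one-line specialization at $\epsilon=0$ gives the ``in particular'' part. First I would record the explicit representation already derived above, namely
\[
\theta_\alpha(x,\epsilon\varphi)=\langle T_\epsilon(x),\,x-y^\epsilon_\alpha(x)\rangle-\tfrac{\alpha}{2}\|y^\epsilon_\alpha(x)-x\|^2,\qquad y^\epsilon_\alpha(x)=P_\Omega\!\left[x-\tfrac1\alpha T_\epsilon(x)\right],
\]
with $T_\epsilon=F+\epsilon\nabla\varphi$, and the analogous formulas with $\beta$ in place of $\alpha$. (The hypothesis that $F$ and $\nabla\varphi$ are continuous on $K$ presupposes $K\subseteq\Omega$, so these expressions make sense.)

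The argument then proceeds in three short steps. (i) The map $(x,\epsilon)\mapsto T_\epsilon(x)=F(x)+\epsilon\nabla\varphi(x)$ is jointly continuous on $K\times[0,\epsilon']$, being assembled from the continuous maps $F$ and $\nabla\varphi$ on $K$ together with the continuous operations of scalar multiplication and addition. (ii) Since the metric projection $P_\Omega$ onto the nonempty closed convex set $\Omega$ is nonexpansive, hence (Lipschitz) continuous on $\Rbb^n$, the map $(x,\epsilon)\mapsto y^\epsilon_\alpha(x)=P_\Omega[x-\tfrac1\alpha T_\epsilon(x)]$ is jointly continuous on $K\times[0,\epsilon']$, and likewise $(x,\epsilon)\mapsto y^\epsilon_\beta(x)$. (iii) Consequently $\theta_\alpha(\cdot,\epsilon\varphi)$ and $\theta_\beta(\cdot,\epsilon\varphi)$, being finite sums of inner products and squared norms of the jointly continuous maps from (i)--(ii), are jointly continuous on $K\times[0,\epsilon']$; therefore so is $\theta_{\alpha\beta}(\cdot,\epsilon\varphi)=\theta_\alpha(\cdot,\epsilon\varphi)-\theta_\beta(\cdot,\epsilon\varphi)$. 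As $K\times[0,\epsilon']$ is compact, Heine--Cantor yields that $\theta_{\alpha\beta}$ is uniformly continuous on $K\times[0,\epsilon']$, which is the first claim.

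For the ``in particular'' statement I would observe that $T_0=F$, so $\theta_{\alpha\beta}(x,0\cdot\varphi)=\theta_{\alpha\beta}(x)$ for every $x\in K$. Given $\delta>0$, uniform continuity on $K\times[0,\epsilon']$ provides $\epsilonbar\in(0,\epsilon']$ such that $|\theta_{\alpha\beta}(x,\epsilon\varphi)-\theta_{\alpha\beta}(x,0\cdot\varphi)|\le\delta$ whenever $\epsilon\le\epsilonbar$, uniformly in $x\in K$; that is, $|\theta_{\alpha\beta}(x,\epsilon\varphi)-\theta_{\alpha\beta}(x)|\le\delta$ for all $(x,\epsilon)\in K\times[0,\epsilonbar]$. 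The only point requiring any care is step (ii): one must invoke continuity of the projection and the validity of the closed-form expression for $y^\epsilon_\alpha$, both of which are standard — the former from nonexpansiveness of $P_\Omega$, the latter because the supremand defining $\theta_\alpha$ is strongly concave (as noted earlier). I do not expect a genuine obstacle here; the substance of the lemma is exactly ``joint continuity of the building blocks $+$ compactness of the parameter set.''
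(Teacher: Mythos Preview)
Your proof is correct and follows essentially the same route as the paper: establish joint continuity of $(x,\epsilon)\mapsto T_\epsilon(x)$, then of $(x,\epsilon)\mapsto y^\epsilon_\alpha(x)$ via continuity of the projection, then of $\theta_{\alpha\beta}$, and finally invoke Heine--Cantor on the compact set $K\times[0,\epsilon']$. If anything, your version is a bit cleaner in deriving the ``in particular'' clause uniformly in $x$ directly from uniform continuity, whereas the paper's wording (``for a fixed $x\in K$'') slightly undersells what is actually proved.
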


\begin{proof}
Recall the D-gap function $\theta_{\alpha\beta}({.,\epsilon\varphi})$ for $VI(T_{\epsilon}, \Omega)$ is given by
\begin{equation*}
 \theta_{\alpha\beta}(x, \epsilon\varphi)=\theta_{\alpha}
(x, \epsilon\varphi)-\theta_{\beta}(x, \epsilon\varphi)
\end{equation*}where
\begin{equation*}
\theta_{\alpha}
(x, \epsilon\varphi)=\langle
F(x)+\epsilon \nabla \varphi(x),
x-y_\alpha^{\epsilon}(x)
 \rangle-\frac{\alpha}{2}\|y^{\epsilon}_\alpha(x)-x\|^2
\end{equation*}with
\begin{equation}\label{proj}
 y^{\epsilon}_\alpha(x)=P_\Omega[x-\frac{1}{\alpha}(F(x)+\epsilon \nabla\varphi(x)].
\end{equation}
Let $(x^n,\epsilon_n)_{n\in\Nbb}$ be a sequence in $K\times \Rbb_+$ and let
$(x^n,\epsilon_n)\rightarrow (x, \epsilon)$ as $n\to\infty$. Since $F$ and $\nabla\varphi$ are continuous, 
and since the projection map on a closed convex set is continuous, we have from
\eqref{proj} that
\begin{eqnarray*}
  \lim_{n\rightarrow \infty}y^{\epsilon_n}_\alpha(x^n)&=&\lim_{n\rightarrow
\infty}P_\Omega[x^n-\frac{1}{\alpha}(F(x^n)+\epsilon_n \nabla\varphi(x^n))]\\
  &=&P_\Omega[x-\frac{1}{\alpha}(F(x)+\epsilon \nabla\varphi(x))]=y^{\epsilon}_\alpha(x).
\end{eqnarray*}
Thus $ y^{\epsilon}_\alpha(x)$ viewed as a function of $x$ and $\epsilon$ is
continuous on $K\times \Rbb_+$. This implies that the function
$\theta_{\alpha}(.;\epsilon\varphi)$ is 
continuous on $K\times \Rbb_+$ as a function of $(x, \epsilon)$ and so is 
$\theta_{\alpha\beta}(.;\epsilon\varphi)$. Since 
$K$ is a compact set, for any $\epsilon'\in \Rbb_+$,
$\theta_{\alpha}(x;\epsilon\varphi)$ is uniformly continuous on $K\times [0,
\epsilon']$. In particular, for a fixed $x\in K$, it holds that for any $\delta>0$, there exists a $0<\epsilonbar<\epsilon'$ such
that for every $\epsilon \in [0, \epsilonbar]$
\begin{equation*}
 | \theta_{\alpha\beta}(x;\epsilon\varphi)- \theta_{\alpha\beta}(x)|\leq \delta.
\end{equation*}
\end{proof}
\begin{thm}
 Consider Algorithm \ref{alg} with the stopping rule
\eqref{alternate_stoppingrule}. Assume that $F$ and $\nabla\varphi$ are Lipschitz 
and $0<\alpha_k<\beta_k$ for each $k$. Assume that the solution set $S_0$ is nonempty
and bounded and that $\epsilon_k\rightarrow 0$. Then the sequence $(x^k)_{k\in\Nbb}$ generated by
the Algorithm \ref{alg} is bounded.
\end{thm}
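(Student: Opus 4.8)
The plan is to decouple the inexactness of Algorithm \ref{alg} from the regularization. First I control the distance from each returned iterate $x^k$ to the \emph{exact} solution $x_{\epsilon_k}$ of the regularized problem $\vi(T_{\epsilon_k},\Omega)$ via the error bound of Proposition \ref{t:errorbound}, and then I show that the regularization (Tikhonov) path $\{x_{\epsilon_k}\}_{k\in\Nbb}$ is itself bounded; the triangle inequality then yields a uniform bound on $\|x^k\|$. For the first step, recall that $x^k$ leaves the inner loop exactly when the test \eqref{alternate_stoppingrule} fails, i.e.\ $\theta_{\alpha_k\beta_k}(x^k;\epsilon_k\varphi)\le \tau_k^2/L_k^2$ with $L_k=\frac{\beta_k+L+\epsilon_k M}{\epsilon_k\rho}\sqrt{2/(\beta_k-\alpha_k)}$. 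Since $T_{\epsilon_k}=F+\epsilon_k\nabla\varphi$ is strongly monotone (the strong convexity of $\varphi$ with modulus $\rho$ being the standing hypothesis of Section \ref{s:reg VI} that makes the stopping rule and the error bound meaningful), $\vi(T_{\epsilon_k},\Omega)$ has a unique solution $x_{\epsilon_k}$ and Proposition \ref{t:errorbound} gives
\[
\|x^k-x_{\epsilon_k}\|\ \le\ L_k\sqrt{\theta_{\alpha_k\beta_k}(x^k;\epsilon_k\varphi)}\ \le\ \tau_k\ \le\ \bar\tau ,
\]
where $\bar\tau:=\sup_k\tau_k<\infty$ (the error tolerances being bounded; in particular the stable choice $\tau_k\equiv\tau$ of Theorem \ref{t:convergenceofSIDA} qualifies). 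Hence $(x^k)_{k\in\Nbb}$ is bounded as soon as $(x_{\epsilon_k})_{k\in\Nbb}$ is.

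For the second step, fix any $x^*\in S_0$, which is nonempty by hypothesis. Testing $\vi(T_{\epsilon_k},\Omega)$ at $y=x^*$ gives $\langle F(x_{\epsilon_k})+\epsilon_k\nabla\varphi(x_{\epsilon_k}),\,x^*-x_{\epsilon_k}\rangle\ge 0$; testing $\vi(F,\Omega)$ at $y=x_{\epsilon_k}$ gives $\langle F(x^*),\,x_{\epsilon_k}-x^*\rangle\ge 0$; and monotonicity of $F$ gives $\langle F(x_{\epsilon_k})-F(x^*),\,x_{\epsilon_k}-x^*\rangle\ge 0$. Adding the last two shows $\langle F(x_{\epsilon_k}),\,x^*-x_{\epsilon_k}\rangle\le 0$, so the first inequality forces $\epsilon_k\langle\nabla\varphi(x_{\epsilon_k}),\,x^*-x_{\epsilon_k}\rangle\ge 0$, i.e.\ (since $\epsilon_k>0$) $\langle\nabla\varphi(x_{\epsilon_k}),\,x_{\epsilon_k}-x^*\rangle\le 0$. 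Strong monotonicity of $\nabla\varphi$ and Cauchy--Schwarz then yield
\[
\rho\,\|x_{\epsilon_k}-x^*\|^2\ \le\ \langle\nabla\varphi(x_{\epsilon_k})-\nabla\varphi(x^*),\,x_{\epsilon_k}-x^*\rangle\ \le\ -\langle\nabla\varphi(x^*),\,x_{\epsilon_k}-x^*\rangle\ \le\ \|\nabla\varphi(x^*)\|\,\|x_{\epsilon_k}-x^*\| ,
\]
hence $\|x_{\epsilon_k}-x^*\|\le\|\nabla\varphi(x^*)\|/\rho$ for every $k$. Combining with the first step, $\|x^k-x^*\|\le\bar\tau+\|\nabla\varphi(x^*)\|/\rho$ for all $k$, which is the assertion.

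The only substantive step is the bound on the Tikhonov path, and the trick there is to use one and the same reference point $x^*\in S_0$ in both variational inequalities so that monotonicity of $F$ annihilates the $F$-terms and leaves the strongly monotone regularizer to control $\|x_{\epsilon_k}-x^*\|$; this is what replaces the Mountain Pass argument of \cite{FacchineiKanzow99}, and it uses only $S_0\neq\emptyset$. (Boundedness of $S_0$ would enter instead in a more literal adaptation of \cite{FacchineiKanzow99}, coupling Lemma \ref{Uniformcont} with boundedness of the sublevel sets of the D-gap function $\theta_{\alpha\beta}$ of $\vi(F,\Omega)$.) Two minor points must be handled with care: the inner iterates need not lie in $\Omega$, so Proposition \ref{t:errorbound} is invoked in its form valid for all $x\in\Rbb^n$ (legitimate because $T_{\epsilon_k}$ is strongly monotone on all of $\Rbb^n$ and its proof does not actually use $x\in\Omega$); and strong convexity of $\varphi$, although not restated in the theorem, is part of the running hypotheses of this section and is what guarantees both the well-posedness of $\vi(T_{\epsilon_k},\Omega)$ and the estimates above.
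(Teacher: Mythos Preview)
Your proof is correct and takes a genuinely different route from the paper's. The paper argues by contradiction in the spirit of \cite{FacchineiKanzow99}: assuming $(x^k)$ escapes every compact set $K\supset S_0$, it uses Lemma \ref{Uniformcont} to compare $\theta_{\alpha_k\beta_k}(\,\cdot\,;\epsilon_k\varphi)$ with the unregularized D-gap function on $\partial K$, observes that on the segment $[x^k,\xbar]$ (with $\xbar\in S_0$) the regularized D-gap function must cross the level $m_k=\min_{\partial K}\theta_{\alpha_k\beta_k}(\,\cdot\,;\epsilon_k\varphi)>0$, hence has an interior maximizer $\xhat_k$ where the gradient vanishes, and then invokes \cite[Theorem 4.3]{LiNg09} (stationary points of the D-gap function are global minimizers) to reach a contradiction since $\theta_{\alpha_k\beta_k}(\xhat_k;\epsilon_k\varphi)\ge m_k>0$. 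This argument uses boundedness of $S_0$ in an essential way (to trap $S_0$ inside a compact $K$) and relies on the external Li--Ng result.

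Your decomposition $\|x^k-x^*\|\le\|x^k-x_{\epsilon_k}\|+\|x_{\epsilon_k}-x^*\|$ is more elementary: the first term is controlled directly by the stopping rule and Proposition \ref{t:errorbound}, and the second by the classical Tikhonov-path estimate, which needs only $S_0\neq\emptyset$ and strong monotonicity of $\nabla\varphi$. Thus your argument actually dispenses with the hypothesis that $S_0$ be bounded, and avoids both Lemma \ref{Uniformcont} and the Li--Ng stationary-point theorem. The two mild hypotheses you flag --- boundedness of $(\tau_k)$ and global (rather than merely on $\Omega$) strong convexity/Lipschitzness so that the error bound applies to iterates outside $\Omega$ --- are also implicitly used in the paper's proof (the former via $\tau_k^2/L_k^2\to 0$, the latter because the Li--Ng result is likewise applied to a point $\xhat_k$ not assumed to lie in $\Omega$), so your version is no less general. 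In short, your approach is both simpler and sharper.
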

\begin{proof} Assume that the sequence $(x^k)_{k\in\Nbb}$ 
generated by the Algorithm \ref{alg} is not bounded. Then there exists
a compact
set $K\in \Rbb^n$ such that $S_0\subset \intr K$ and $x^k \not\in K$
for
sufficiently large $k$.
Denote
\begin{equation}
\mbar_k:=\min_{x\in \partial K}\theta_{\alpha_k\beta_k}(x),
\end{equation}
where we use $\partial K$ to denote the boundary of $K$ (not to be confused with the 
subdifferential, though this should be clear from context).
Since the gap function $\theta_{\alpha_k\beta_k}$ is non-negative on $\Rbb^n$ and
since 
$S_0\subset \intr K$,
it is clear that $\mbar_k>0$.
Since  $\theta_{\alpha_k\beta_k}(x)\geq \mbar_k$ for any $x\in \partial K$, choosing
$\delta=c\mbar_k, c\in (0,1)$ we have from Lemma  \ref{Uniformcont} that
\begin{equation*}
 \theta_{\alpha_k\beta_k}(x;\epsilon_k\varphi) \geq
\theta_{\alpha_k\beta_k}(x)-c\mbar_k \geq
\mbar_k-c\mbar_k=(1-c)\mbar_k~~\forall x\in \partial K,
\end{equation*} 
which implies that
\begin{equation} \label{m-and-mbar}
 m_k:=\min_{x\in \partial K}\theta_{\alpha_k\beta_k}(x;\epsilon_k\varphi)\geq
(1-c)\mbar_k.
\end{equation}
Let $\xbar\in S_0$. Then $\theta_{\alpha_k\beta_k}(\xbar)=0$ and hence, again from Lemma  \ref{Uniformcont},
\begin{equation}\label{firstineq}
\theta_{\alpha_k\beta_k}(\xbar;\epsilon_k\varphi)=\theta_{\alpha_k\beta_k}(\xbar;\epsilon_k\varphi)-
\theta_{\alpha_k\beta_k}(\xbar) \leq c\mbar_k.
\end{equation}
Since $\theta_{\alpha_k\beta_k}(x^k;\epsilon_k\varphi)\leq
\frac{\tau_k^2}{L_k^2}$ by the
stopping rule \eqref{alternate_stoppingrule}, and since $\frac{\tau_k^2}{L_k^2}\to 0$, for sufficiently large
$k$, we have
\begin{equation}\label{secondineq}
 \theta_{\alpha_k\beta_k}(x^k;\epsilon_k\varphi)\leq c\mbar_k.
\end{equation}
Let $k$ be sufficiently large such that $x^k\not\in K$ and the inequalities
\eqref{m-and-mbar}-\eqref{secondineq}
hold. Since $c\in (0,1)$, from \eqref{m-and-mbar} we have
$c\mbar_k\leq\frac{c}{1-c}m_k<m_k$.
  
Without loss of generality assume that
$\theta_{\alpha_k\beta_k}(x^k;\epsilon_k\varphi)\leq
\theta_{\alpha_k\beta_k}(\xbar;\epsilon_k\varphi)$. 
From Weierstrass' extremal value theorem, $\theta_{\alpha_k\beta_k}(.;\epsilon_k\varphi)$
must attain a maximum at least once in $[x^k, \xbar]$. 
Let $\xhat_k\in [x^k, \xbar]$ be the point where
$\theta_{\alpha_k\beta_k}(.;\epsilon_k\varphi)$ attains its maximum. Now, viewing
$\xhat_k$ 
as a local
maximizer, it satisfies \cite[Proposition 2.3.2]{Clarke83}
\begin{equation}
 0=\nabla\theta_{\alpha_k\beta_k}(\xhat_k;\epsilon_k\varphi).
\end{equation}
Since $\xbar\in \intr K$ and $x^k\not\in K$, 
there exists a $\lambda\in(0,1)$ such that $x^k_\lambda=\lambda x^k+(1-\lambda)
\xbar \in \bdy K\cap [x^k, \xbar]$. Now 
$\theta_{\alpha_k\beta_k}(\xhat_k;\epsilon_k\varphi)\geq
\theta_{\alpha_k\beta_k}(x_\lambda^k;\epsilon_k\varphi)\geq m_k$. Hence 
$\theta_{\alpha_k\beta_k}(\xhat_k;\epsilon_k\varphi) > 0$. But, the stationary
point
$\xhat_k$ must be a
global minimizer of the D-gap function
$\theta_{\alpha_k\beta_k}(.;\epsilon_k\varphi)$ \cite[Theorem
4.3]{LiNg09}, which is a contradiction.
\end{proof}

\subsection{Error bounds}\label{s:error bounds S_eps}
Our goal in this section is to develop error bounds for the distance between the solution sets
$S_\epsilon$ and $S_0$. In \cite[Theorem 4.1]{MarcotteZhu98} it is shown that, if the solution set  $S_0$
of $VI(F,\Omega)$ is weakly sharp, $\Omega$ is compact and $F$ is pseudomonotone$^+$, then
there exists a positive number $\alpha$ such that
     \begin{equation}\label{weaksharpimplicationcompact}
    G(x)\geq \alpha~d(x, S_0)\quad \forall x\in \Omega.
     \end{equation}
     Hence under these three assumptions, we can have one type of error bound in terms of the dual gap function $G$ for the 
     distance between $S_\epsilon$ and $S_0$. That is, for any $x_\epsilon\in S_\epsilon$ 
\[
       G(x_\epsilon)\geq \alpha~d(x_\epsilon, S_0)\quad \forall x\in \Omega.
\]
We show that, even in the absence of compactness on $\Omega$ and the pseudomonotone$^+$ property on $F$,
we can derive an error bound for $d(x_\epsilon, S_0)$. 

In the proof  of \cite[Theorem 4.1]{MarcotteZhu98}, it is shown that 
when the solution set  $S_0$ of $VI(F,\Omega)$ is weakly sharp, 
that is, if \eqref{weaksharpVI} holds for $S_0$,
then there exists a positive number $\alpha$ such that for any $x\in \Omega$ and $\xbar=P_{S_0}(x)$
\begin{equation}\label{weaksharpimplication}
      \langle F(\xbar), x-\xbar\rangle\geq \alpha~ d(x,S_0 ).
     \end{equation}
We use this fact to construct an error bound along the lines of Theorem \ref{t:theorem-errorbound-ereg}. 
We need the following property, which is a stronger condition than \eqref{weaksharpVIG-gamma} and is, 
to our knowledge, new.
\boxedtxt{There exist
$\alpha>0$ and $\gamma\geq 1$ such that 
\begin{equation}\label{weaksharp-gamma-stronger}
 \langle F(y), x-y \rangle \geq \alpha~d(x, S_0)^\gamma~~~\forall x\in \Omega,
y=P_{S_0}(x).
\end{equation}}
 \begin{thm}\label{t:error bounds Se}  Let $\Omega$ and $F$ satisfy Assumption \ref{hyp:F} and 
Assumption \ref{hyp:GVIvarphi}\eqref{hyp:1}, and let the function $\varphi$ be convex and differentiable
on $\Omega$.
 \begin{enumerate}[(i)]
  \item\label{t:error bounds Se i}  Assume that the solution set $S_0$ is weakly sharp (satisfies \eqref{weaksharpVI}) 
with sharpness constant
$\alpha$.  Then, for any  $\epsilon > 0$, 
and any $x_\epsilon\in S_\epsilon$,
 \begin{equation}\label{errorboundorder1}
  \alpha~dist(x_\epsilon, S_0)\leq \epsilon
(\varphi(\xbar_\epsilon)-\varphi(x_\epsilon)),
 \end{equation}
where $\xbar_\epsilon=P_{S_0}(x_\epsilon)$.
  \item\label{t:error bounds Se ii} Let $F$ be coercive with respect to $\Omega$. 
Suppose that there exist  $\gamma> 1$ and
$\alpha>0$ such that \eqref{weaksharp-gamma-stronger} holds.
Then there exists $\tau>0$ such that, for all $\epsilon>0$,
  \begin{equation}
   d(x_\epsilon, S_0)^{\gamma-1}\leq \tau\epsilon~~~\forall~~ x_\epsilon\in S_\epsilon.
  \end{equation}
In particular, $S_\epsilon$ is bounded for each $\epsilon>0$. 
 \end{enumerate}

 \end{thm}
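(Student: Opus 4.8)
The plan is to handle both parts with the same device: test the variational inequality defining $S_\epsilon$ at the projection $\xbar_\epsilon:=P_{S_0}(x_\epsilon)$, use the monotonicity of $F$ to trade $F(x_\epsilon)$ for $F(\xbar_\epsilon)$, and then insert the appropriate weak‑sharpness estimate. Since $\varphi$ is differentiable, $x_\epsilon\in S_\epsilon$ says precisely that $\langle F(x_\epsilon)+\epsilon\nabla\varphi(x_\epsilon),\,x-x_\epsilon\rangle\ge 0$ for all $x\in\Omega$; the set $S_0$ is nonempty, closed and convex (closedness by Assumption~\ref{hyp:GVIvarphi}\eqref{hyp:1}, convexity by the monotonicity and continuity of $F$ under Assumption~\ref{hyp:F}), so $\xbar_\epsilon$ is well defined and lies in $\Omega$. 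Substituting $x=\xbar_\epsilon$ and combining with $\langle F(x_\epsilon)-F(\xbar_\epsilon),\,x_\epsilon-\xbar_\epsilon\rangle\ge 0$ yields the key inequality
\[
\epsilon\,\langle\nabla\varphi(x_\epsilon),\,\xbar_\epsilon-x_\epsilon\rangle\;\ge\;\langle F(\xbar_\epsilon),\,x_\epsilon-\xbar_\epsilon\rangle .
\]

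For part~\eqref{t:error bounds Se i} I would bound the right‑hand side from below by $\alpha\,d(x_\epsilon,S_0)$ using \eqref{weaksharpimplication} (the consequence of weak sharpness \eqref{weaksharpVI} taken from \cite{MarcotteZhu98}), and bound the left‑hand side from above using the gradient inequality for convex $\varphi$, $\langle\nabla\varphi(x_\epsilon),\,\xbar_\epsilon-x_\epsilon\rangle\le\varphi(\xbar_\epsilon)-\varphi(x_\epsilon)$. This produces \eqref{errorboundorder1} at once; note that no coercivity is needed here because the bound is left in this unresolved form.

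For part~\eqref{t:error bounds Se ii} I would instead bound the right‑hand side below by $\alpha\,d(x_\epsilon,S_0)^\gamma$ via the stronger hypothesis \eqref{weaksharp-gamma-stronger} — this is exactly the place where one needs \eqref{weaksharp-gamma-stronger} rather than the $G$‑based notion \eqref{weaksharpVIG-gamma}, since testing the regularized VI at $\xbar_\epsilon$ produces $\langle F(\xbar_\epsilon),\,x_\epsilon-\xbar_\epsilon\rangle$ and not $G(x_\epsilon)$. For the left‑hand side, following the proof of Theorem~\ref{t:theorem-errorbound-ereg}, I would dominate $\langle\nabla\varphi(x_\epsilon),\,\xbar_\epsilon-x_\epsilon\rangle$ by $\varphi(\xbar_\epsilon)-\varphi(x_\epsilon)$ and then by $\langle\nabla\varphi(\xbar_\epsilon),\,\xbar_\epsilon-x_\epsilon\rangle\le\|\nabla\varphi(\xbar_\epsilon)\|\,d(x_\epsilon,S_0)$ (convexity of $\varphi$ together with Cauchy--Schwarz). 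Since $F$ is coercive, Proposition~\ref{t:existence}\eqref{t:existence i} makes $S_0$ bounded, hence compact, so $M:=\max_{z\in S_0}\|\nabla\varphi(z)\|<\infty$ and we obtain $\alpha\,d(x_\epsilon,S_0)^\gamma\le\epsilon M\,d(x_\epsilon,S_0)$. Dividing by $d(x_\epsilon,S_0)$ (the case $d(x_\epsilon,S_0)=0$ being trivial) gives $d(x_\epsilon,S_0)^{\gamma-1}\le\tau\epsilon$ with $\tau=M/\alpha$; since every $x_\epsilon\in S_\epsilon$ then lies within distance $(\tau\epsilon)^{1/(\gamma-1)}$ of the bounded set $S_0$, boundedness of $S_\epsilon$ follows.

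The argument is largely bookkeeping; the only delicate point is keeping the directions of the monotonicity and gradient inequalities consistent so that the two sides of the key inequality are correctly squeezed between $\alpha\,d(x_\epsilon,S_0)^\gamma$ and $\epsilon M\,d(x_\epsilon,S_0)$, and (for part~\eqref{t:error bounds Se ii}) ensuring the uniform bound on $\|\nabla\varphi\|$ by invoking compactness of $S_0$ via coercivity of $F$. The conceptual content, and the reason \eqref{weaksharp-gamma-stronger} is the appropriate hypothesis here, is that the quantity naturally controlled by the regularized inequality at the projection is $\langle F(\xbar_\epsilon),\,x_\epsilon-\xbar_\epsilon\rangle$, which \eqref{weaksharpVIG-gamma} — phrased in terms of $G$ — does not directly govern.
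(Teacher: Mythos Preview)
Your proposal is correct and follows essentially the same route as the paper: derive the key inequality $\epsilon\langle\nabla\varphi(x_\epsilon),\xbar_\epsilon-x_\epsilon\rangle\ge\langle F(\xbar_\epsilon),x_\epsilon-\xbar_\epsilon\rangle$ by testing the regularized VI at $\xbar_\epsilon=P_{S_0}(x_\epsilon)$ and invoking monotonicity, then bound the right side below via \eqref{weaksharpimplication} or \eqref{weaksharp-gamma-stronger} and the left side above via the gradient inequality (and, for part~\eqref{t:error bounds Se ii}, a second gradient inequality at $\xbar_\epsilon$ plus Cauchy--Schwarz and the uniform bound on $\|\nabla\varphi\|$ over the compact $S_0$). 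Your explicit handling of the trivial case $d(x_\epsilon,S_0)=0$ and your remark on why \eqref{weaksharp-gamma-stronger} rather than \eqref{weaksharpVIG-gamma} is the natural hypothesis here are welcome clarifications.
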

\begin{proof}
We begin with some general observations.  
Let $\epsilon>0$. For any $x_\epsilon \in
S_\epsilon$  
 \begin{equation*}
  \DRLip{F(x_\epsilon)+\epsilon \nabla\varphi(x_\epsilon)}{y-x_\epsilon}\geq 0\quad
\forall y\in \Omega.
 \end{equation*}
Rearranging yields
 \begin{equation*}
  \DRLip{F(x_\epsilon)}{x_\epsilon-y}\leq \epsilon\DRLip{
\nabla\varphi(x_\epsilon)}{y-x_\epsilon}\quad\forall y\in \Omega.
 \end{equation*}
 Since $F$ is monotone and $\varphi$ is convex, for all $y\in\Omega$, it 
 holds that
 \[
 \DRLip{F(y)}{x_\epsilon-y}\leq \DRLip{F(x_\epsilon)}{x_\epsilon-y}\leq
\epsilon  \DRLip{\nabla\varphi(x_\epsilon)}{y-x_\epsilon}\leq \epsilon
(\varphi(y)-\varphi(x_\epsilon)).
\]
 In particular, for $\xbar_\epsilon:=P_{S_0}(x_\epsilon) \in \Omega$ (the projection is nonempty
by Assumption \ref{hyp:GVIvarphi}\eqref{hyp:1}), we have 
  \begin{equation}\label{e:monotoneandconvex}
\DRLip{F(\xbar_\epsilon)}{x_\epsilon-\xbar_\epsilon}\leq \epsilon
(\varphi(\xbar_\epsilon)-\varphi(x_\epsilon)).
 \end{equation}

\eqref{t:error bounds Se i}. 
The inequality, \eqref{weaksharpimplication}, together with \eqref{e:monotoneandconvex} 
immediately yields
   \begin{equation*}
\alpha~d(x_\epsilon, S_0)\leq \epsilon
(\varphi(\xbar_\epsilon)-\varphi(x_\epsilon))
 \end{equation*}
as claimed.

\eqref{t:error bounds Se ii}. Inequalities \eqref{e:monotoneandconvex} and \eqref{weaksharp-gamma-stronger} yield
 \begin{equation}\label{e:dist_ineq1}
\alpha~d(x_\epsilon, S_0)^\gamma \leq \epsilon
(\varphi(\xbar_\epsilon)-\varphi(x_\epsilon)).
 \end{equation}
Since $\varphi$ is convex real
valued,
 \begin{equation*}
  \DRLip{\nabla\varphi(\xbar_\epsilon)}{x_\epsilon-\xbar_\epsilon}\leq
(\varphi(x_\epsilon)-\varphi(\xbar_\epsilon)).
 \end{equation*}
Using Cauchy-Schwarz inequality
 \begin{equation*}
 -\|\nabla\varphi(\xbar_\epsilon)\|_2\|x_\epsilon-\xbar_\epsilon\|_2 \leq
\DRLip{\nabla\varphi(\xbar_\epsilon)}{x_\epsilon-\xbar_\epsilon}\leq
\varphi(x_\epsilon)-\varphi(\xbar_\epsilon).
 \end{equation*}
 Since $\xbar_\epsilon:=P_{S_0}(x_\epsilon)$. this implies that
  \begin{equation}\label{e:dist_ineq2}
 -d(x_\epsilon, S_0)\|\nabla\varphi(\xbar_\epsilon)\|_2\leq
\varphi(x_\epsilon)-\varphi(\xbar_\epsilon).
 \end{equation}
Combining \eqref{e:dist_ineq2} and \eqref{e:dist_ineq1} yields
  \begin{equation*}
\alpha d(x_\epsilon, S_0)^{\gamma-1} \leq 
\epsilon\|\nabla\varphi(\xbar_\epsilon)\|_2.
 \end{equation*}
Now, for $F$ and $\Omega$ satisfying Assumption \ref{hyp:F} with 
$F$ coercive on $\Omega$, the solution set $S_0$ is bounded (Proposition \ref{t:existence}\eqref{t:existence i}). 
Moreover, by Assumption \ref{hyp:GVIepsilon},
$\varphi$ is convex and, by assumption differentiable, on $\Omega$, and hence convex and differentiable on $S_0$.   Consequently,  
$\nabla\varphi$ is bounded uniformly on the compact set $S_0$.   
Hence the proof
follows 
 with $\tau=\alpha^{-1}M$ where $M$ is the uniform bound for
$\|\nabla\varphi(\cdot)\|$.
\end{proof}

\section{Numerical Illustration and Conclusion}
We illustrate the theory explored in the previous sections and indicate 
directions for future investigation with numerical experiments on the following 
simple example.
\subsection{Best Approximation}
\begin{eg}\label{eg:ba}
 Let 
 \[
  \Omega\equiv\left\{\left.x = (x_1,x_2,x_3)^T\in\Rbb^3~\right|~\langle n,x\rangle = 
  -1, ~ x_1\leq 1, n=(0,1,1)^T\right\}
 \]
and define $F(x)\equiv x-P_{C}(x)$ where $C\equiv \Rbb^3_++(0,-1/4,1/4)$.    
We compare two regularizing functions, $\varphi_1(x)\equiv \|x\|_1$
and $\varphi_2(x)\equiv\tfrac12 \|x\|_2^2$ (shifted Tikhonov)  for the approaches 
to solving \eqref{e:VI} explored separately in Section \ref{s:reg G} and 
Section \ref{s:reg VI}, namely by solving \eqref{e:PGe} and \eqref{e:GVIe}
respectively.  

For this problem  we know the following.
\begin{itemize}
 \item $S_0=\{(x, -\tfrac34, -\tfrac14)~|~x\in[0,1]\}$.
 \item $S_{\varphi_j} = \{(0, -\tfrac34, -\tfrac14)\}, ~(j=1,2)$.
 \item For the regularizer $\varphi_1$, $\SGeps = \{(0, -\tfrac34, -\tfrac14)\}$ for all $\epsilon>0$.\\
{\em Proof sketch}. 
The nearest points in $\Omega$ to 
 the point $x_0=(0,0,0)$ with respect to the $\ell^1$ norm are points on the line segment 
 $y(t)=t(0,-1,1)+(0,0,-1)$
 for $t\in[0,1]$, and this line segment intersects $S_0$ at the point 
 $(0, -\tfrac34, -\tfrac14)$, where, we know, $G(x)$ attains
 its minimum.  \hfill $\triangle$
 \item For the regularizer $\varphi_2$, the shifted 
 Tikhonov regularizer, $\SGeps\cap S_0=\emptyset$ for all $\epsilon>0$.\\
{\em Proof sketch}.  The global minimum of 
 $\varphi_2$ on $\Omega$, namely the point $(0,-1/2, -1/2)$, 
 does not coincide with those of $G(x)$ ($S_0$). 
 Moreover, $\varphi_2$ is strictly convex on $\Omega$, so the global minimum of the 
 sum cannot be on $S_0$ for any value of $\epsilon$. \hfill $\triangle$
 \item For the regularization $\varphi_1$, 
  $S_0\cap S_\epsilon =  \{(0, -\tfrac34, -\tfrac14)\}$ and, in fact 
 $S_\epsilon =  \{(0, -\tfrac34, -\tfrac14)\}$ for all $\epsilon>0$.  \\
{\em Proof sketch}. 
Again,  because the nearest point in $\Omega$ to 
 the origin with respect to the $\ell^1$ norm are all points $y(t)=t(0,-1,1)+(0,0,-1)$
 for $t\in[0,1]$, and this line segment intersects $S_0$ at the point $(0, -\tfrac34, -\tfrac14)$, by 
 Theorem \ref{t:solution sets}\eqref{t:solution sets 3} the claim follows.  \hfill $\triangle$
 \item The regularization $\varphi_2$ in \eqref{e:GVIe} is not exact.\\
{\em Proof sketch}. 
 For this regularization, a short calculation 
 shows that, for all $\epsilon>0$, $S_\epsilon$ is a unique point on the line segment  $y(t)=t(0,-1,1)+(0,0,-1)$
 for $t$ in the open interval $(\tfrac14,\tfrac12)$.  This interval does not intersect $S_0$, that is, 
$S_0\cap S_\epsilon=\emptyset$ for all $\epsilon>0$. \hfill $\triangle$
 \item $\lambda\in[0,+\infty)$ are Lagrange multipliers of $\gvi(\partial\varphi_1, S_0)$.  
\end{itemize}

The numerical results reported in Table \ref{tbl:1} were generated 
from the same initial point, $(1,-2, 1)$. 
\begin{table}[htbp]
 \caption{Comparison of optimization models \eqref{e:GVIe} and \eqref{e:PGe}
 with different regularizations ($\ell^1$ or $\ell^2$) and different regularization 
 parameters $\epsilon$.}\label{tbl:1}
 \vspace{5pt}
 \resizebox{\textwidth}{!}{
  \begin{tabular}{lcccc} \hline
   Problem                   & Iteration & CPU (sec)     & Distance to Solution & Distance to $S_0$     \\ \hline
\eqref{e:PGe}, $\varphi=\ell^1$, $\epsilon=0.5$ 	  &  20& 6.889 	& $1.357\times 10^{-5}$  &$1.357\times 10^{-5}$ 	 \\
\eqref{e:PGe}, $\varphi=\ell^1$, $\epsilon=0.1$ 	  &  22 &6.440	&  $2.264\times 10^{-9}$ &$2.264\times 10^{-9}$\\
\eqref{e:PGe}, $\varphi=\ell^1$, $\epsilon=0.01$	  &  32&6.353 	& $8.412\times 10^{-10}$ &$8.412\times 10^{-10}$  \\
\eqref{e:PGe}, $\varphi=\ell^1$, $\epsilon=0.005$	  &  37& 8.552 	& $2.660\times 10^{-9}$ &$2.660\times 10^{-9}$ \\
\eqref{e:PGe}, $\varphi=\ell^1$, $\epsilon=0.0001$ &  29 &6.903	&  $3.285\times 10^{-9}$ &$3.285\times 10^{-9}$\\
\eqref{e:PGe}, $\varphi=\ell^2$, $\epsilon=0.5$ 	   &  8& 8.650 	& $1.768\times 10^{-1}$  &$1.768\times 10^{-1}$ 	 \\
\eqref{e:PGe}, $\varphi=\ell^2$, $\epsilon=0.1$ 	   &  10& 8.670 	& $5.893\times 10^{-2}$  &$5.893\times 10^{-2}$\\
\eqref{e:PGe}, $\varphi=\ell^2$, $\epsilon=0.01$	   &  20 &11.68	& $6.931\times 10^{-3}$ &$6.931\times 10^{-3}$  \\
\eqref{e:PGe}, $\varphi=\ell^2$, $\epsilon=0.005$   &  19& 12.85 	&  $3.500\times 10^{-3}$& $3.500\times 10^{-3}$\\
\eqref{e:PGe}, $\varphi=\ell^2$, $\epsilon=0.0001$ &  29 & 23.35	& $6.078\times 10^{-5}$  &$1.83\times 10^{-2}$\\
\eqref{e:GVIe}, $\varphi=\ell^1$, $\epsilon=0.5$ 	  &  71&.1099 	& $7.930\times 10^{-10}$ &$.7930\times 10^{-9}$	 \\
\eqref{e:GVIe}, $\varphi=\ell^1$, $\epsilon=0.1$ 	  &  74&.0860 	&  $.0248$ &$.9277\times 10^{-9}$\\
\eqref{e:GVIe}, $\varphi=\ell^1$, $\epsilon=0.01$	  &  75& .0868 	&  $.8125$ &$.9653\times 10^{-9}$ \\
\eqref{e:GVIe}, $\varphi=\ell^1$, $\epsilon=0.005$  &  75&.0859 	& $.9063$ &$.9876\times 10^{-9}$  \\
\eqref{e:GVIe}, $\varphi=\ell^1$, $\epsilon=0.0001$ &  76&.0883 	& $.9981$& $.7570\times 10^{-9}$\\
\eqref{e:GVIe}, $\varphi=\ell^2$, $\epsilon=0.5$ 	  &  66&.1018 	& $1.571\times 10^{-9}$&$1.179\times 10^{-1}$  	 \\
\eqref{e:GVIe}, $\varphi=\ell^2$, $\epsilon=0.1$ 	  &  281&.3143	& $1.725\times 10^{-9}$  &$3.21\times 10^{-2}$\\
\eqref{e:GVIe}, $\varphi=\ell^2$, $\epsilon=0.01$	  &  1910& 2.153 & $1.763\times 10^{-9}$  &$3.5\times 10^{-3}$\\
\eqref{e:GVIe}, $\varphi=\ell^2$, $\epsilon=0.005$  &  3267& 3.697 & $1.766\times 10^{-9}$&$1.8\times 10^{-3}$ \\
\eqref{e:GVIe}, $\varphi=\ell^2$, $\epsilon=0.0001$ &  6932& 7.621 & $1.768\times 10^{-9}$&$3.54\times 10^{-5}$\\
  \hline
  \end{tabular}
 }\\
\end{table}
\hfill $\Box$
\end{eg}

Example \eqref{eg:ba} has been purposely designed for simplicity - there are clearly 
other ways to solve the variational inequality.  Recognizing that the problem is one 
of finding nearest points on the half-plane $\Omega$ to the shifted orthant, simple 
alternating projections would converge to an exact solution finitely, without 
recourse to regularization.  Our purpose, however, is not to explore 
efficient algorithms for solving this particular problem, but rather to illustrate 
the theory of (exact) regularization and to underscore the possible 
advantages of different modelling approaches.  

The optimization problem \eqref{e:PGe} was solved using Matlab's {\em fmincon} with 
an interior point solver.  Evaluation of the dual gap function $G$ given by \eqref{e:dual gap func}
also involves solving an optimization problem.  For the problem in Example \eqref{eg:ba}
this has an explicit representation, but in general this will not be the case.  We therefore
evaluate the dual gap function numerically so that the experimental results will accurately
simulate a practical implementation.  

There are a variety of ways to solve \eqref{e:GVIe}.  We briefly describe an approach here
where the error bounds derived in Section \ref{s:error bounds S_eps} are put to use. This is
an Armijo
descent type algorithm by Li and Ng applicable for a Lipschitz, coercive mapping \cite{LiNg09}.
Hence, the analysis of Li and Ng  
applies to the problem $VI(T_{\epsilon}, \Omega)$, with appropriate assumptions on $F$ and $\varphi$ 
under which $T_\epsilon$ is Lipschitz and coercive on $\Omega$. This method uses the descent
direction
\begin{equation}
\label{direction}
d^k:=\begin{cases}
    y^\varphi_{{\alpha_k}, \epsilon_k}(x)-y^\varphi_{{\beta_k}, \epsilon_k}(x), & \text{if
$c_k\|x-y^\varphi_{{\alpha_k}, \epsilon_k}(x)\|\leq \|y^\varphi_{{\alpha_k},
\epsilon_k}(x)-y^\varphi_{{\beta_k},
\epsilon_k}(x)\|$},\\
    y_{{\alpha_k}, \epsilon_k}(x)-x, & \text{otherwise};
  \end{cases}
\end{equation}
where $y^\varphi_{{\alpha_k}, \epsilon_k}(x)=P_\Omega[x-\frac{1}{{\alpha_k}}T_{\epsilon_k}( x)]$, the point where the supremum in $\theta_{{\alpha_k}}(x,\epsilon_k\varphi)$ is attained
and $c_k$ is chosen to satisfy
\[
 c_k\leq \min\left\{1, \frac{{\beta_k}-{\alpha_k}}{2(L^\theta_k+{\beta_k})}\right\}.
 \]
Here $L^\theta_k$ is the Lipschitz constant of $\theta_{{\alpha_k}{\beta_k}}(.,\epsilon_k\varphi)$ 
on $Lev^\theta_k= \{x:\theta_{{\alpha_k}{\beta_k}}(x;\epsilon_k\varphi)\leq
\theta_{{\alpha_k}{\beta_k}}(x^{k,0};\epsilon_k\varphi)\}$, where $x^{k,0}$ is the chosen initial point 
for the inner iteration for each $k$.
It has a step rule that finds  the smallest non-negative integer $m$ such that,
\[
\sqrt{ \theta_{{\alpha_k}{\beta_k}}(x^{k,j}+\gamma_k^m d^{k,j}, \epsilon_k\varphi)}-
\sqrt{\theta_{{\alpha_k}{\beta_k}}(x^{k,j}, \epsilon_k\varphi)}\leq
-\frac{\delta_k}{4}\gamma_k^m\|d^k\|,
\]
and updates $x^{k,j}$ as
\[
x^{k, j+1}=x^{k,j}+t_{k,j} d^{k,j}~~\mbox{where}~~  t_{k,j}= \gamma_k^m, \gamma_k\in (0,1),
\]
where the constant $\delta_k$ chosen for a strongly monotone map $T_{\epsilon_k}$ satisfies \cite[Remark 4.3]{LiNg09}
\[
 \delta_k\leq \min
 \left\{\frac{1}{2}\sqrt{\frac{{\beta_k}-{\alpha_k}}{2}}, \frac{\sqrt{2}c_k
\mu^k_{{\alpha_k}{\beta_k}}}{\sqrt{{\beta_k}-{\alpha_k}}}\right\},
\] and $\mu^k_{{\alpha_k}{\beta_k}}$ is the modulus of strong monotonicity of $T_{\epsilon_k}$. 
We use an estimate for $L^\theta_k$ and a step size $\gamma_k=.9$ for all $k$. We note that,
since $P_{C}$ for $C\equiv\mathbb{R}^3_+  + ( 0, -\frac{1}{4}, \frac{1}{4}) $ is nonexpansive, 
the Lipschitz constant of $F$ defined by $I-P_C$ where $I$ is the 
identity mapping, is $L=2$.\\\\
\begin{itemize}
  \item {\em The regularizer $\varphi_2 = \tfrac12\|\cdot\|_2^2$}: 
 Since the modulus of strong monotonicity of  $\nabla\varphi_2$ is $\rho=1$, $T_{\epsilon_k}$ is strongly monotone
 with modulus of strong monotonicity $\epsilon_k$ and hence $\delta_k$ in this case is chosen to satisfy
\[
 \delta_k\leq \min
 \left\{\frac{1}{2}\sqrt{\frac{{\beta_k}-{\alpha_k}}{2}}, \frac{\sqrt{2}c_k
\epsilon_k}{\sqrt{{\beta_k}-{\alpha_k}}}\right\}.
\] Note that $T_{\epsilon_k}$ is Lipschitz on $\Omega$. Since $T_{\epsilon_k}$
is strongly monotone, it is coercive too on $\Omega$
 \cite[Remark 2.1]{LiNg09}. So, we can 
 apply the method also calculate the error bound $p_k$ in 
\eqref{alternate_stoppingrule}. Noting that the Lipschitz constant of $\nabla\varphi_2$ is $M=1$,
choosing $\alpha_k=1$ and $\beta_k=2$ for all $k$, the constant $L_k$ in \eqref{e:ineq_dist} is 
calculated as $L_k= \frac{\beta_k+L+\epsilon_k M}{\epsilon_k\rho
}\sqrt{\frac{2}{(\beta_k-\alpha_k)}}=\frac{4+\epsilon_k }{\epsilon_k
}\sqrt{2}$. We choose $\tau_k= 10^{-8}/\epsilon$ in the tolerance $p_k=\frac{\tau_k^2}{L_k^2}$ in 
\eqref{alternate_stoppingrule}. 
  \item {\em The regularizer $\varphi_1 = \|\cdot\|_1$:} There is no available theory.  We include this 
  experiment to indicate the potential for this approach, and, hopefully, to inspire more 
  research to explain these results.  
  \end{itemize}
\medskip

\begin{remark}
A few trends from Table \ref{tbl:1} are worth noting before we conclude.  First, while 
exact regularization of \eqref{e:PGe} converges to a solution of the unregularized variational inequality, it 
requires more iterations than (inexact) regularization via the $\ell^2$ norm.  Nevertheless, the 
per iteration computational cost, as shown by the CPU times, indicates that the nonsmooth 
regularization is still more efficient.  This could 
be due to our solution technique for the smooth regularization.  If a 
more efficient method for smooth regularization were available, an iteratively regularized
problem, along the lines of Algorithm \ref{alg},  could be a reasonable strategy.  Such a strategy is  
made possible by the error bound established in Theorem \ref{t:theorem-errorbound-ereg}.  For direct 
regularization following model \eqref{e:GVIe}, we 
have implemented Algorithm \ref{alg} with stopping criteria given by the error bounds established in 
Theorem \ref{t:errorbound}.  This performs as expected for smooth regularization.  The distance 
to the solution to the regularized problem is reported according to the upper bound established in 
Theorem \ref{t:errorbound}.  What is not covered by the theory 
developed here are the results of our solution to model \eqref{e:GVIe} with the nonsmooth 
regularization $\varphi = \|\cdot\|_1$.  Since we know the answer, we monitored the 
  distance of the iterates to the solution of the regularized and unregularized problems. 
  The gap functions and stopping criteria developed for the case of smooth regularization 
 was not useful or even remotely informative regarding the progress of the iterates.  Nevertheless, 
 the direction choice and backtracking procedures appear to function well for this example.  The 
 algorithm appears to move quickly to the set $S_0$, but then cannot make further progress to the solution to the regularized 
 problem, which consists of a single element from $S_0$. Finally, we note that, as indicated by the tabulated 
 CPU times, the iteration counts
 should only be used as an indication of the {\em relative} computational complexity.  One iteration 
 of the method of Li and Ng for solving \eqref{e:GVIe} is a tiny fraction of the computational cost of 
 one iteration of our approach to solving \eqref{e:PGe}.  
\end{remark}

\subsection{Conclusion}
Our inspiration for this study was the theory of exact regularization in optimization developed in 
\cite{FriedlanderTseng07}.  As with optimization, exact regularization for variational inequalities is 
closely related to the existence of Lagrange multipliers for a related optimization problem, 
namely \eqref{e:PphiG} (Definition \ref{d:Lagrange VI}.  We have found that the dual gap function 
defined by \eqref{e:dual gap func} plays a central role here.  The dual gap function is 
difficult to work with in practice since it is itself the supremum of a nonlinear objective.  
We determined that, even in the absence of exact regularization,  
it is possible to establish error bounds for both model approaches to the true solution set 
introducing
the notion of {\em weak-sharp minimum of degree $\gamma$} defined by 
\eqref{weaksharpVIG-gamma} and 
\eqref{weaksharp-gamma-stronger} respectively.  Two avenues for 
further exploration present themselves.  One direction is an investigation of efficient numerical 
strategies based approximations to the dual gap functional $G$.  The second direction 
is an investigation 
of generalized variational inequalities to accommodate nonsmooth, set-valued regularization for 
regularized variational inequalities of the form \eqref{e:GVIe}.   Both of these topics are 
formidable challenges. 

\subsection{Acknowledgments}
The research of CC and DRL was supported by the Deutsche Forschungsgemeinschaft/German Research Foundation grant SFB755-TPA4.

\end{document}